\numberwithin{equation}{section}
\newtheorem{Theorem}{Theorem}[section]
\newtheorem{Corollary}[Theorem]{Corollary}
\newtheorem{Proposition}[Theorem]{Proposition}
 { \theoremstyle{definition}
\newtheorem{Example}[Theorem]{Example}
\newtheorem{Remark}[Theorem]{Remark} }
\begin{document}

\allowdisplaybreaks

\renewcommand{\thefootnote}{$\star$}

\newcommand{\arXivNumber}{1502.07256}

\renewcommand{\PaperNumber}{021}

\FirstPageHeading
	
\ShortArticleName{Classes of Bivariate Orthogonal Polynomials}
	
\ArticleName{Classes of Bivariate Orthogonal Polynomials\footnote{This paper is a~contribution to the Special Issue
on Orthogonal Polynomials, Special Functions and Applications.
The full collection is available at \href{http://www.emis.de/journals/SIGMA/OPSFA2015.html}{http://www.emis.de/journals/SIGMA/OPSFA2015.html}}}

\Author{Mourad E.H.~ISMAIL~$^{\dag\ddag}$ and Ruiming ZHANG~$^\S$}
\AuthorNameForHeading{M.E.H.~Ismail and R.~Zhang}

\Address{$^\dag$~Department of Mathematics, King Saud University, Riyadh, Saudi Arabia}

\Address{$^\ddag$~Department of Mathematics, University of Central Florida, Orlando, Florida 32816, USA}
\EmailD{\href{mailto:mourad.eh.ismail@gmail.com}{mourad.eh.ismail@gmail.com}}
\URLaddressD{\url{http://math.cos.ucf.edu/people/ismail-mourad/}}
	
\Address{$^\S$~College of Science, Northwest A{\rm \&}F University, Yangling, Shaanxi 712100, P.R.~China}
\EmailD{\href{mailto:ruimingzhang@yahoo.com}{ruimingzhang@yahoo.com}}

\ArticleDates{Received August 04, 2015, in f\/inal form February 15, 2016; Published online February 24, 2016}	

\Abstract{We introduce a class of orthogonal polynomials in two variables which generalizes the disc polynomials and the 2-$D$ Hermite polynomials. We identify certain interesting members of this class including a one variable generalization of the 2-$D$ Hermite
polynomials and a two variable extension of the Zernike or disc polynomials. We also give $q$-analogues of all these extensions.
In each case in addition to generating functions and three term recursions we provide raising and lowering operators and show that the polynomials are eigenfunctions of second-order partial dif\/ferential or $q$-dif\/ference operators. }
	
\Keywords{disc polynomials; Zernike polynomials; 2$D$-Laguerre polynomials; $q$-2$D$-La\-guerre polynomials; generating functions;
ladder operators; $q$-Sturm--Liouville equations; $q$-integrals; $q$-Zernike polynomials; 2$D$-Jacobi polynomials; $q$-2$D$-Jacobi polynomials; connection relations; biorthogonal functions;
generating functions; Rodrigues formulas; zeros}
	
\Classification{33C50; 33D50; 33C45; 33D45}

\renewcommand{\thefootnote}{\arabic{footnote}}
\setcounter{footnote}{0}

\vspace{-1mm}

\section{Introduction}

One of the earliest orthogonal polynomials are the Legendre polynomials which are orthogonal with respect to $dx=$ (linear measure) on the unit interval. They are the spherical harmonics in $\mathbb{R}^2$. The ultraspherical polynomials
are the spherical harmonics on $\mathbb{R}^m$ and are orthogonal with respect to
 $\frac{\Gamma(\nu+3/2)}{\sqrt{\pi}\Gamma(\nu+1)}(1-x^2)^\nu dx$ on the unit interval. If we
 replace $x$ by $x/\sqrt{\nu}$ and let $\nu \to \infty$ the measure becomes
 $\pi^{-1/2}e^{-x^2} dx$ and the ultraspherical polynomials, properly scaled, tend to Hermite polynomials. The next sequence of polynomials in this hierarchy are the Jacobi polynomials which are orthogonal on the unit interval with respect to
 $\frac{2^{-\alpha-\beta-1}\Gamma(\alpha+\beta+2)}{\Gamma(\alpha+1)\Gamma(\beta+1)}(1-x)^\alpha(1+x)^\beta dx$. The Laguerre polynomials arise when $x$ is replaced by $-1+2x/\alpha$ and $\alpha \to \infty$.

The 2$D$ history is somewhat parallel. In the late 1920's Frits Zernike was working on optical problems involving telescopes and microscopes. He introduced polynomials orthogonal on $|z| \le 1$ with respect to the area measure, so this is the 2$D$ analogue of Legendre polynomials, see~\cite{Zer,Zer:Bri}. The more general disc or Zernike polynomials are orthogonal with respect to $(1-x^2-y^2)^\nu dxdy$ on the unit disc.
Again if we replace $(x,y)$ by $(x/\sqrt{\nu}, y/\sqrt{\nu})$ and let $\nu \to \infty$ this measure, properly normalized becomes $e^{-x^2-y^2} dxdy$ on $\mathbb{R}^2$. The polynomials become the
2$D$-Hermite polynomials introduced by Ito in~\cite{Ito} in a~dif\/ferent way. They are def\/ined by
 \begin{gather*}
 H_{m,n}(z_1, z_2) = \sum_{k=0}^{m \wedge n} \binom{m}{k}
 \binom{n}{k} (-1)^k k! z^{m-k}(\bar z)^{n-k},
 \end{gather*}
 where $m \wedge n=\min\{m,n\}$.
 In 1966 D.R.~Myrick \cite{Myr} considered only the radial part of the polynomials orthogonal with respect to the Jacobi type measure $(x^2+y^2)^\alpha(1-x^2-y^2)^\beta dxdy$ on the unit disc, see also~\cite{Wun3}. Addition theorem for the disc polynomials were found by {\u S}apiro~\cite{Sap} and Koornwinder~\cite{Koo}. Maldonado~\cite{Mal} very brief\/ly considered a class of radial functions which can be used to generate 2$D$ orthogonal polynomials.
 P.~Floris~\cite{Flo} introduced a $q$-analogue of the disc polynomials
 proved an addition theorem for them, see also Floris and Koelink~\cite{Flo:Koe}.

\looseness=-1
 There are several other families of orthogonal polynomials in two and higher variables.
 Koornwinder's
 survey article~\cite{Koo2} covers most of what was known up to the time of its publication in
 1975. In the 19th century Hermite found families of orthogonal and biorthogonal polynomials,
 see \cite{Her1,Her2}, \cite[Chapter~12]{Erd:Mag:Obe:Tri} and \cite{App,App:Kam}.
 The more recent book \cite{Dun:Xu} contains a treatment of the theory and applications of
 orthogonal polynomials in 2 and higher variables. One recent related reference is \cite{Wal}.
The 2$D$-Hermite polynomials
 have may applications to physical problems, see \cite{Ali:Bag:Hon,Cot:Gaz:Gor,Shi,Thi:Hon:Krz,Wun,Wun2}. Mathematical properties of these
 polynomials have been developed in \cite{Gha,Gha2,Int:Int}. A~multilinear
 generating function, of Kibble--Slepian type, is proved in~\cite{Ism4}. In~\cite{Ism:Zha3}
 we gave two $q$-analogues of the
 2$D$-Hermite polynomials and studied the polynomials in great detail. We also
 introduced a dif\/ferent $q$-analogue of the disc polynomials.

 In this paper we f\/irst identify a general class of two variable polynomials whose measure is the
 product of the uniform measure on the circle times a radial measure. This class not only include
 the 2$D$-Hermite polynomials and their $q$-analogues but it also contains the Zernike (or disc)
 polynomials and its $q$-analogues. This will be done in Section~\ref{section2}. One special feature of this class
 is that multiplication by $z$ or $\bar z$ results in a three term recurrence relation. This special
 feature is unlike what happens for general real $2-D$ systems~\cite{Dun:Xu}. In fact the earlier work~\cite{Ism4} motivated Yuan Xu~\cite{Xu} to reformulate the theory of orthogonal polynomials
 in several variables in terms of complex variables.

 In Section~\ref{section3} we give a one parameter extension of the 2$D$-Hermite polynomials. They are
 2$D$ analogues of Laguerre polynomials and will be denoted by $\{Z_{m,n}^{(\beta)}(z, \bar z)\}$.
 We record their def\/inition, orthogonality relation, and the three term recurrence relations
 in Section~\ref{section3}. In Section~\ref{section3} we also derive several dif\/ferential properties of these polynomials. It must be noted that the combinatorics of these polynomials have been explored in \cite{Ism:Zen2}.
Section~\ref{section4} contains a~treatment of a two parameter family of disc polynomials.
They are orthogonal with respect to $(x^2+y^2)^\alpha(1-x^2-y^2)^\beta dxdy$ on the unit disc. They were brief\/ly studied in~\cite{Ism:Zen1} but here we derive new generating functions and study their dif\/ferential-dif\/ference properties and show that they satisfy a Sturm--Liouville system.

The $q$-analogues start with Section~\ref{section5} where we introduce two $q$-analogues of the 2$D$-Laguerre polynomials $\{Z_{m,n}^{(\beta)}(z, \bar z|q)\}$. Both can be considered as a $q$-extension of the 2$D$-Laguerre polynomials, one for $ q \in (0,1)$ and one for $q >1$. As expected the case $q >1$ leads to an indeterminate moment problem while the case $q \in (0,1)$ is a 2$D$-extension of the Wall or little $q$-Laguerre polynomials, \cite{Chi,Koe:Swa}.
A $q$-analogue of the two parameter 2$D$-disc polynomials is introduced in Section~\ref{section6} and many of their properties are developed.
In Section~\ref{section7} we f\/ind polynomial solutions of a~second-order partial dif\/ferential equation.
 Section~\ref{section8} contains a system of biorthogonal polynomials which is a possible 2$D$-analogue of the Askey--Wilson polynomials.

 We shall use the notations
 \begin{gather*}
 (D_{q, z} f)(z) = \frac{f(z) - f(qz)}{(1-q) z},\qquad (\delta_{q,z}f)(z)=z (D_{q, z} f)(z),
 \\
 [\alpha]_q=\frac{1-q^\alpha}{1-q},\qquad m\vee n=\max\{m,n\},\qquad m\wedge n=\min\{m,n\}
 \end{gather*}
 and
 \begin{gather*}
 {\partial}_z=\frac{\partial}{{\partial}_z},\qquad {\delta}_z=z{\partial}_z.
 \end{gather*}

 Throughout the paper we will use $z_1$ and $z_2$. The polynomials we consider here are polynomials of two real variables, so we shall always assume that $z_2 = \bar z_1$.

\section{General constructions}\label{section2}

We now give a concrete construction for general $2D$ systems. Given $N\in \mathbb{N}\cup \{\infty\}$, let $\nu(\theta;N)$ be a probability measure
on the circle $\mathbb{T}=\{e^{i\theta}\,|\,0\le \theta\le 2\pi\}$ of Fourier type
\begin{gather*}
\int_{\mathbb{T}}e^{i(m-n)\theta}d\nu(\theta;N)=\delta_{m,n},\qquad m,n=0,1,\dots,N
\end{gather*}
and $\{\phi_n(r; \alpha)\}$ be a system of orthogonal polynomials satisfying the orthogonality relation
 \begin{gather*}
 \int_0^\infty \phi_m(r; \alpha)\phi_n(r; \alpha) r^\alpha d\mu(r) = \zeta_n(\alpha) \delta_{m,n}, \qquad \alpha \ge 0.
 \end{gather*}
 It is assumed the positive Borel measure $\mu$ does not depend on $\alpha$. Let
 \begin{gather}
 \label{eqphin}
 \phi_n(r;\alpha) = \sum_{j=0}^n c_j(n,\alpha) r^{n-j}, \qquad c_j(n,\alpha) \in \mathbb{R},
 \end{gather}
 and def\/ine polynomials
 \begin{gather}
 \label{eqdeffmn}
 f_{m,n}(z_{1},z_{2};\beta)=\begin{cases}
 z_{1}^{m-n}\phi_{n}(z_{1}z_{2};m-n+\beta), & m\ge n,\\
 f_{n,m}(z_{2},z_{1};\beta), & m<n.
 \end{cases}
 \end{gather}
 This def\/ines the polynomials for all $m$, $n$ and $z_1,z_2\in \mathbb{C}$. It is clear that
 \begin{gather}
 \label{eq:symmetry-condition}
 \overline{f_{m,n}(z, \bar z;\beta)}= f_{n,m}(z, \bar z;\beta), \qquad m<n.
 \end{gather}
From \eqref{eqdeffmn} it is clear that for $m\ge n$
 \begin{gather*}
 z_1f_{m,n}(z_1, z_2;\beta+1)=f_{m+1,n}(z_1,z_2;\beta),
 \end{gather*}
 and
 \begin{gather*}
 -i \frac{\partial}{\partial \theta} f_{m,n}(z, \bar z;\beta) = (m-n) f_{m,n}(z, \bar z;\beta).
 \notag
 \end{gather*}
 Therefore,
 \begin{gather*}
({\delta}_{z_1} - {\delta}_{z_2} )
 f_{m,n}(z_1, z_2;\beta) = (m-n) f_{m,n}(z_1, z_2;\beta), \qquad m \ge n.
 \end{gather*}
 Similarly,
 \begin{gather*}
 \big({\delta}_{q,z_1} -q^{m-n} {\delta}_{q,z_2} \big)f_{m,n}(z_1, z_2;\beta) =[m-n]_q
 f_{m,n}(z_1, z_2;\beta), \qquad m \ge n.
 \end{gather*}

 It must be noted that $f_{m,n}(z_1,z_2)$ is def\/ined for $z_1, z_2 \in \mathbb{C}$ but the next theorem gives an orthogonality relation on $\mathbb{R}^2$. This is exactly analogous to the classical univariate orthogonal polynomials which are def\/ined for $z \in \mathbb{C}$ but their orthogonality is on a subset of $\mathbb{R}$.

 \begin{Theorem} \label{orthfmn}
 Given $N\in \mathbb{N}$, for nonnegative integers $m$, $n$, $s$, $t$ such that $m+t\le N$, $n+s\le N$ the polynomials $\{f_{m,n}(z_1, z_2;\beta)\}$ satisfy the orthogonality relation
 \begin{gather}
 \label{eqorthfmn}
 \int_{\mathbb{R}^2} f_{m,n}(z, \bar z;\beta ) \overline{f_{s,t}(z, \bar z;\beta)}
 d\nu(\theta;N) d\mu\big(r^2;\beta\big)
 = \zeta_{m \wedge n} (|m-n|+\beta) \delta_{m,s} \delta_{n,t},
 \end{gather}
 where $d\mu(r;\beta)=r^{\beta}d\mu(r)$. If $N=\infty$, \eqref{eqorthfmn} holds for all nonnegative integers~$m$, $n$, $s$, $t$.
 \end{Theorem}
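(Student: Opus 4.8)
The plan is to reduce \eqref{eqorthfmn} to a product of a Fourier integral over~$\mathbb{T}$ and a one-variable integral against~$d\mu$, after putting the polynomials in polar form. The first observation, immediate from \eqref{eqdeffmn} once one writes $z=re^{i\theta}$ so that $z_1z_2=z\bar z=r^2$, is that for \emph{all} nonnegative integers $m,n$
\begin{gather*}
f_{m,n}(z,\bar z;\beta)=r^{|m-n|}\,e^{i(m-n)\theta}\,\phi_{m\wedge n}\big(r^2;|m-n|+\beta\big);
\end{gather*}
for $m\ge n$ this is the defining formula, and for $m<n$ it follows by applying the same to $f_{n,m}(z_2,z_1;\beta)$. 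Since the coefficients $c_j$ in \eqref{eqphin} are real and $r$ is real, conjugation only flips the sign of the exponent of $e^{i\theta}$, so
\begin{gather*}
f_{m,n}(z,\bar z;\beta)\,\overline{f_{s,t}(z,\bar z;\beta)}=r^{|m-n|+|s-t|}\,e^{i[(m+t)-(n+s)]\theta}\,\phi_{m\wedge n}\big(r^2;|m-n|+\beta\big)\,\phi_{s\wedge t}\big(r^2;|s-t|+\beta\big).
\end{gather*}

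Next I would carry out the angular integration. Because $m+t\le N$ and $n+s\le N$, the Fourier-type hypothesis on~$\nu$ applies and gives $\int_{\mathbb{T}}e^{i[(m+t)-(n+s)]\theta}\,d\nu(\theta;N)=\delta_{m+t,\,n+s}$. Thus everything vanishes unless $m+t=n+s$, equivalently $m-n=s-t$, in which case $|m-n|=|s-t|$ and the remaining radial factor is $r^{2|m-n|}$. Substituting $u=r^2$ and recalling $d\mu(r;\beta)=r^{\beta}d\mu(r)$, this radial factor together with the $r^{2\beta}$ coming from $d\mu(r^2;\beta)$ becomes $u^{|m-n|+\beta}$, so the radial integral reduces to $\int_0^\infty \phi_{m\wedge n}(u;|m-n|+\beta)\,\phi_{s\wedge t}(u;|m-n|+\beta)\,u^{|m-n|+\beta}\,d\mu(u)$, which by the stated orthogonality of $\{\phi_n(\cdot;\alpha)\}$ with parameter $\alpha=|m-n|+\beta$ equals $\zeta_{m\wedge n}(|m-n|+\beta)\,\delta_{m\wedge n,\,s\wedge t}$.

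Finally I would combine the two Kronecker deltas. On the support $m+t=n+s$ the relation $m-n=s-t$ holds, and an elementary case split on the sign of $m-n$ shows that $\delta_{m+t,\,n+s}\,\delta_{m\wedge n,\,s\wedge t}=\delta_{m,s}\,\delta_{n,t}$; on this support $|m-n|$ and $m\wedge n$ are unambiguous, and one reads off the asserted value $\zeta_{m\wedge n}(|m-n|+\beta)\,\delta_{m,s}\,\delta_{n,t}$. For $N=\infty$ the constraints $m+t\le N$, $n+s\le N$ are vacuous, so the identity holds for all nonnegative integers $m,n,s,t$. I do not anticipate a genuine obstacle; the only points needing care are the verification of the closed polar form of $f_{m,n}$ for all indices (in particular the $m<n$ branch of \eqref{eqdeffmn}) and the bookkeeping in the substitution $u=r^2$, i.e.\ the precise reading of $d\mu(r^2;\beta)$.
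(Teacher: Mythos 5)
Your proof is correct and follows essentially the same route as the paper's: integrate over $\theta$ first so the Fourier hypothesis forces $m+t=n+s$, then recognize the remaining radial integral as the one-variable orthogonality of $\{\phi_n(\cdot;\alpha)\}$ at $\alpha=|m-n|+\beta$. The only (harmless) difference is that you work with a unified polar formula valid for all $m$, $n$ and combine the two Kronecker deltas explicitly, whereas the paper reduces to $m\ge n$ by the conjugation symmetry \eqref{eq:symmetry-condition}; your bookkeeping is if anything slightly more careful.
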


 \begin{proof}
 There is no loss of generality in assuming $m \ge n$. Otherwise, we simply apply~\eqref{eq:symmetry-condition} to switch the orders of $m$ and $n$, $s$ and $t$, then consider the consider the complex conjugate of the obtained integral instead.
 By performing the integral over $\theta$ we see that the left-hand side of~\eqref{eqorthfmn}
 equals
 \begin{gather*}
 \int_{\mathbb{R}^2} f_{m,n}(z, \bar z;\beta) \overline{f_{s,t}(z, \bar z;\beta)} d\nu(\theta) d\mu\big(r^2;\beta\big)
 = 0
 \end{gather*}
unless $m+t = n+s$. Let
$\alpha = m-n = s -t$. Therefore the above integral equals
 \begin{gather*}
 \int_0^\infty \sum_{j=0}^n c_j(n,\alpha+\beta) r^{2n + \alpha-2j} \sum_{k=0}^t c_k(t,\alpha+\beta) r^{2t + \alpha-2k} r^{2\beta}d\mu\big(r^2\big) \\
 \qquad{}
 = \int_0^\infty \sum_{j=0}^n c_j(n,\alpha+\beta) r^{n -j} \sum_{k=0}^t c_k(t,\alpha+\beta) r^{t -k}
 r^{\alpha+\beta} d\mu(r) = \zeta_n(\alpha+\beta) \delta_{n,t},
\end{gather*}
and the proof is complete.
\end{proof}

\begin{Remark}
For any positive integer $N$, if we take $\nu(\theta;N)$ as the discrete probability measure used in FFT (fast fourier transform), we get f\/initely many 2D orthogonal polynomials. But in most of our applications we use $N=\infty$ and the circle measure $\frac{d\theta}{2\pi}$ for $d\nu(\theta)$ unless specif\/ically stated.
\end{Remark}

We now come to the three term recurrence relation.
\begin{Theorem}\label{rrfmn}
The polynomials $\{f_{m,n}(z_1, z_2;\beta)\}$ satisfy the three term recurrence relation
\begin{gather}
z_2 f_{m+1,n}(z_1, z_2;\beta)
= \frac{c_0(n, m-n+1+\beta)} {c_0(n+1, m-n+\beta )} f_{m+1, n+1}(z_1, z_2;\beta)\nonumber\\
\qquad{}
- \frac{c_0(n, m-n+1+\beta)c_{n+1}(n+1, m-n+\beta)} {c_0(n+1, m-n+\beta) c_n(n, m-n+\beta)} f_{m, n}(z_1, z_2;\beta), \qquad m \ge n.\label{eq3trrG2D}
\end{gather}
\end{Theorem}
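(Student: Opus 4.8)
The plan is to work directly with the definition \eqref{eqdeffmn} and reduce the claimed three-term recurrence to the known three-term recurrence satisfied by the one-variable orthogonal polynomials $\{\phi_n(r;\alpha)\}$. Since the assertion \eqref{eq3trrG2D} is stated for $m\ge n$, we may assume $m\ge n$ throughout; then $f_{m+1,n}(z_1,z_2;\beta)=z_1^{m+1-n}\phi_n(z_1z_2;m+1-n+\beta)$, $f_{m+1,n+1}(z_1,z_2;\beta)=z_1^{m-n}\phi_{n+1}(z_1z_2;m-n+\beta)$, and $f_{m,n}(z_1,z_2;\beta)=z_1^{m-n}\phi_n(z_1z_2;m-n+\beta)$, where in each case the implicit $z_2$-dependence is through the product $z_1z_2$. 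First I would set $\alpha=m-n+\beta$ and $r=z_1z_2$, multiply the desired identity through by $z_1^{-(m-n)}$, and observe that $z_2 f_{m+1,n} = z_2 z_1^{m+1-n}\phi_n(z_1z_2;\alpha+1) = z_1^{m-n}\, r\,\phi_n(r;\alpha+1)$. Thus \eqref{eq3trrG2D} is equivalent to the purely one-variable identity
\begin{gather*}
r\,\phi_n(r;\alpha+1)
= \frac{c_0(n,\alpha+1)}{c_0(n+1,\alpha)}\,\phi_{n+1}(r;\alpha)
- \frac{c_0(n,\alpha+1)\,c_{n+1}(n+1,\alpha)}{c_0(n+1,\alpha)\,c_n(n,\alpha)}\,\phi_n(r;\alpha).
\end{gather*}

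To prove this one-variable identity I would argue by a degree and orthogonality comparison. The left side $r\,\phi_n(r;\alpha+1)$ is a polynomial in $r$ of degree $n+1$, so it can be expanded in the basis $\{\phi_k(r;\alpha)\}_{k=0}^{n+1}$. The key point is that $r\,\phi_n(r;\alpha+1)$ is orthogonal, with respect to $r^\alpha\,d\mu(r)$, to every $\phi_k(r;\alpha)$ with $k\le n-1$: indeed $\int_0^\infty r\,\phi_n(r;\alpha+1)\phi_k(r;\alpha)\,r^\alpha\,d\mu(r) = \int_0^\infty \phi_n(r;\alpha+1)\,\phi_k(r;\alpha)\,r^{\alpha+1}\,d\mu(r)$, and $\phi_k(r;\alpha)$ is a polynomial of degree $k\le n-1$ which is annihilated by the functional $g\mapsto\int_0^\infty \phi_n(r;\alpha+1)g(r)\,r^{\alpha+1}d\mu(r)$ on polynomials of degree $<n$ by the defining orthogonality of $\phi_n(\cdot;\alpha+1)$. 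Hence only the coefficients of $\phi_{n+1}(r;\alpha)$ and $\phi_n(r;\alpha)$ survive, giving $r\,\phi_n(r;\alpha+1)=A\,\phi_{n+1}(r;\alpha)+B\,\phi_n(r;\alpha)$ for some scalars $A$, $B$; there is no lower-order tail because the coefficient of $\phi_n(\cdot;\alpha+1)$ in $\phi_n(\cdot;\alpha)$... more precisely the orthogonality kills $k\le n-1$ directly, so automatically the expansion truncates as claimed.

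It then remains to evaluate $A$ and $B$, which I would do by matching the top two coefficients in the monomial basis $r^{n+1},r^n,\dots$. Using \eqref{eqphin}, the leading coefficient of $\phi_n(r;\alpha+1)$ is $c_0(n,\alpha+1)$, so the leading coefficient of $r\,\phi_n(r;\alpha+1)$ is $c_0(n,\alpha+1)$; matching the $r^{n+1}$ coefficient against $A\,c_0(n+1,\alpha)$ yields $A = c_0(n,\alpha+1)/c_0(n+1,\alpha)$. For $B$ I would compare the $r^n$ coefficients: on the left it is $c_1(n,\alpha+1)$, on the right it is $A\,c_1(n+1,\alpha)+B\,c_0(n,\alpha)$, so $B = \bigl(c_1(n,\alpha+1)-A\,c_1(n+1,\alpha)\bigr)/c_0(n,\alpha)$. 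To see that this matches the stated coefficient $-A\,c_{n+1}(n+1,\alpha)/c_n(n,\alpha)$, I would instead compute $B$ by evaluating at $r=0$: since $\phi_k(0;\alpha)=c_k(k,\alpha)$ and the left side vanishes at $r=0$, we get $0 = A\,c_{n+1}(n+1,\alpha)+B\,c_n(n,\alpha)$, whence $B=-A\,c_{n+1}(n+1,\alpha)/c_n(n,\alpha)$, which is exactly the coefficient in \eqref{eq3trrG2D}. Substituting $A$ and $B$ and multiplying back by $z_1^{m-n}$, together with the identity $z_2 z_1^{m+1-n}\phi_n(z_1z_2;\alpha+1)=z_1^{m-n}(z_1z_2)\phi_n(z_1z_2;\alpha+1)$, completes the proof. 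The only mild subtlety — the part I would be most careful about — is confirming that the expansion of $r\,\phi_n(r;\alpha+1)$ in the $\phi_k(\cdot;\alpha)$ basis has no contributions below degree $n$; this is precisely the orthogonality computation above, and it hinges on the stated hypothesis that the measure $\mu$ is independent of $\alpha$ so that both $\phi_n(\cdot;\alpha+1)$ and $\phi_k(\cdot;\alpha)$ are orthogonal with respect to one and the same $d\mu$ (with different power weights).
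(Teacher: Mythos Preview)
Your proof is correct and follows essentially the same route as the paper: both reduce \eqref{eq3trrG2D} to the one-variable identity $r\,\phi_n(r;\alpha+1)=A\,\phi_{n+1}(r;\alpha)+B\,\phi_n(r;\alpha)$ with the stated $A$, $B$, and then substitute $r=z_1z_2$, $\alpha=m-n+\beta$ and multiply by $z_1^{m-n}$. The only difference is packaging: the paper invokes the Christof\/fel formula \cite[Theorem~2.7.1]{Ism} for this identity, whereas you prove that special case directly via orthogonality (to kill the $k\le n-1$ terms), leading-coef\/f\/icient matching (for~$A$), and evaluation at $r=0$ (for~$B$); note that your last step tacitly uses $c_n(n,\alpha)=\phi_n(0;\alpha)\ne 0$, which the paper justif\/ies by observing that the zeros of $\phi_n(\cdot;\alpha)$ lie in $(0,\infty)$.
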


\begin{proof}
Since the polynomials
$\{ \phi_n(r;\alpha) \}$ are orthogonal on $[0, \infty)$, their zeros are in $(0, \infty)$. Hence $ \phi_n(0;\alpha) \ne 0$ for all $n$ and all $\alpha\ge 0$. From the Christof\/fel formula \cite[Theorem~2.7.1]{Ism} we see that
\begin{gather*}
r \phi_n(r;\alpha+1)
= \frac{c_0(n, \alpha+1)} {c_0(n+1, \alpha) \phi_n(0;\alpha)}
[ \phi_{n+1}(r;\alpha) \phi_n(0;\alpha) - \phi_n(r;\alpha) \phi_{n+1}(0;\alpha)],
\end{gather*}
that is
\begin{gather}
\label{eqChrisFor}
 r \phi_n(r;\alpha+1) = \frac{c_0(n, \alpha+1)} {c_0(n+1, \alpha)} \phi_{n+1}(r;\alpha) -
\frac{c_0(n, \alpha+1)c_{n+1}(n+1, \alpha)} {c_0(n+1, \alpha) c_n(n, \alpha)} \phi_n(r;\alpha).
\end{gather}
Now take $\alpha = m-n+\beta$, replace $r$ by $z_1 z_2$ then multiply both sides by ${z_1}^{m-n}$ to estab\-lish~\eqref{eq3trrG2D}.
\end{proof}

One can also prove that \cite{Ism:Zen1}
\begin{gather}
\label{eqrecrel2}
z_1 f_{m,n}(z_1, z_2;\beta) - \frac{c_0(n, m-n+\beta)}{c_0(n,m+1-n+\beta)} f_{m+1,n}(z_1, z_2;\beta) = u_{m,n} f_{m, n-1}(z_1, z_2;\beta),
\end{gather}
where{\samepage
\begin{gather*}
u_{m,n} = \frac{ c_0(n, \alpha+1) c_1(n,\alpha) - c_0(n, \alpha) c_1(n,1+ \alpha)}
 {c_0(n-1, 1+\alpha)c_0(n, 1+\alpha)}
\end{gather*}
and $\alpha=m-n+\beta$.}

It must be noted that the 2$D$-polynomials may have recurrence relations containing more than three terms~\cite{Dun:Xu}. It is surprising that the polynomials in the special class we are considering have three term recursions.
Another important point is that the def\/inition of the polynomials~$f_{m,n}$ indicates that it has either a trivial zero at $z =0$ or the zeros will lie on circles whose radii are the square roots of the zeros of $\phi_n(r;m-n+\beta)$. The zeros of $\phi_n(r;m-n+\beta)$ are positive, real and simple.

Let
\begin{gather}
r\phi_n(r; \alpha) = a_n(\alpha) \phi_{n+1}(r; \alpha)+ c_n(\alpha) \phi_n(r; \alpha)+ b_n(\alpha) \phi_{n-1}(r; \alpha)
\label{eq3trrphi}
\end{gather}
be the three term recurrence relation satisf\/ied by the polynomials $\{\phi_n(r; \alpha)\}$,
for some real numbers $a_{n}(\alpha)$, $b_{n}(\alpha)$, $c_{n}(\alpha)$. By matching the corresponding
coef\/f\/icients we f\/ind that
\begin{gather}
a_{n}(\alpha) = \frac{c_{0}(n,\alpha)}{c_{0}(n+1,\alpha)},\label{eq:eqan}\\
b_{n}(\alpha) = \frac{c_{0}(n,\alpha)c_{2}(n,\alpha)-c_{1}(n,\alpha)^{2}}{c_{0}(n-1,\alpha)c_{0}(n,\alpha)}
 - \frac{c_{0}(n,\alpha)c_{2}(n+1,\alpha)-c_{1}(n,\alpha)c_{1}(n+1,\alpha)}{c_{0}(n-1,\alpha)c_{0}(n+1,\alpha)},\label{eqbn} \\
c_{n}(\alpha) = \frac{c_{1}(n,\alpha)}{c_{0}(n,\alpha)}-\frac{c_{1}(n+1,\alpha)}{c_{0}(n+1,\alpha)}.\label{eqcn}
\end{gather}

 Making use of \eqref{eqphin} and \eqref{eqdeffmn} we conclude that the recursion \eqref{eq3trrphi} becomes
\begin{gather}
 [z_1 z_2 - c_n(m-n+\beta)] f_{m,n}(z_1, z_2;\beta)\nonumber\\
\qquad {}= a_n(m-n+\beta) f_{m+1,n+1}(z_1, z_2;\beta) + b_n(m-n+\beta) f_{m-1,n-1}(z_1,z_2;\beta).\label{eqrecrel2-1-2}
\end{gather}

\begin{Example}
Consider the case of the 2-$D$ Hermite polynomials when the coef\/f\/icients in \eqref{eqdeffmn} are given by
 \begin{gather*}
 c_j(n,\alpha) = \frac{n!(n+\alpha)!(-1)^j}{j!(n-j)!(n+\alpha-j)!}.
 \end{gather*}
 The recursion \eqref{eq3trrG2D} leads to the second three term recurrence relation
 \begin{gather*}
 z_1 H_{m,n}(z_1, z_2) = nH_{m,n-1}(z_1, z_2)+ H_{m+1,n}(z_1, z_2), \\
 z_2 H_{m,n}(z_1, z_2|q) = m H_{m-1,n}(z_1, z_2)+ H_{m,n+1}(z_1, z_2),
\end{gather*}
 while the f\/irst can be proved by direct computation.

 Similarly the case when the coef\/f\/icients in \eqref{eqdeffmn} are given by
 \begin{gather*}
 c_j(n,\alpha) = \frac{(q;q)_n(q;q)_{n+\alpha}(-1)^j}{(q;q)_j(q;q)_{n-j}(q;q)_{n+\alpha-j}}
 q^{\binom{j}{2}}
 \end{gather*}
 leads to the recurrence relations for this $q$-analogue, see \cite{Ism:Zha3}.
 \end{Example}

\begin{Remark}If we demand that both \eqref{eqdeffmn} and \eqref{eq:symmetry-condition} are valid for all $m$, $n$, $z$, then we must have
\begin{gather*}
c_{j}(n,\alpha)=\binom{n}{j}\frac{a_{j}(n,\alpha)}{(n+\alpha-j)!}
\end{gather*}
for the classical cases, and
\begin{gather}
c_{j}(n,\alpha)=\binom{n}{j}_q\frac{a_{j}(n,\alpha|q)}{(q;q)_{n+\alpha-j}}\label{eq2.21}
\end{gather}
for the $q$-analogues, where $c_{j}(n,\alpha)$ are the coef\/f\/icients in~\eqref{eqdeffmn}. These factorizations imply that $\phi_{n}(r;\alpha)$
must be a generalization of Laguerre orthogonal polynomials.
However, in most of the cases, \eqref{eqdeffmn} is only valid for $m\geq n$, whereas~\eqref{eq:symmetry-condition} acts as a kind of analytical continuation to the case $m<n$.
\end{Remark}

\begin{Remark}
In view of the def\/ining equation~\eqref{eqdeffmn} the question of f\/inding the large $m$, $n$ asymptotics of the two variate polynomials $f_{m,n}$ is equivalent to f\/inding the large~$m$, $n$ behavior of $\phi_n(r, m-n+\beta)$. It will be of interest to carry out this program at least for some special systems, including the Freud type polynomials orthogonal with respect to
$x^\alpha \exp(-p(x))$, where~$p$ is a polynomial with positive leading term.
 \end{Remark}

 We now consider an inverse to \eqref{eqChrisFor}.
 \begin{Proposition}
\label{prop:ismzen1} Given a nonnegative measure on $(0,\infty)$
\begin{gather*}
d\mu(x;\alpha)=x^{\alpha}d\nu(x),\quad x\ge0,\alpha>-1
\end{gather*}
 such that
\begin{gather*}
\int_{0}^{\infty}x^{n}d\mu(x;\alpha)<\infty,\qquad n=0,1,\dots.
\end{gather*}
 Let $\{\phi_{n}(x;\alpha)\}_{n=0}^{\infty}$ satisfy
\begin{gather*}
\int_{0}^{\infty}\phi_{m}(x;\alpha)\phi_{n}(x;\alpha)d\mu(x;\alpha)=\zeta_{n}(\alpha)\delta_{m,n}.
\end{gather*}
 Then
\begin{gather}
\phi_{n}(x;\alpha)-a_{n}(\alpha)\phi_{n}(x;\alpha+1)=b_{n}(\alpha)\phi_{n-1}(x;\alpha+1),\label{eq:ismzen-4}
\end{gather}
 where
\begin{gather*}
\phi_{n}(x;\alpha)=\sum_{j=0}^{n}c_{j}(n,\alpha)x^{n-j}, 
\\
b_{n}(\alpha) = \frac{c_{0}(n,\alpha+1)c_{1}(n,\alpha)-c_{0}(n,\alpha)c_{1}(n,\alpha+1)}{c_{0}(n-1,\alpha+1)c_{0}(n,\alpha+1)},\\ 
a_{n}(\alpha) = \frac{c_{0}(n,\alpha)}{c_{0}(n,\alpha+1)}.
\end{gather*}
Furthermore,
\begin{gather}
 \int_{0}^{\infty}x\phi_{n}(x;\alpha)\phi_{n-1}(x;\alpha+1)d\mu(x;\alpha) =b_{n}(\alpha)\zeta_{n-1}(\alpha+1).
\label{eq:ismzen-8}
\end{gather}
\end{Proposition}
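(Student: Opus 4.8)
The plan is to establish the connection formula \eqref{eq:ismzen-4} by expanding both sides in the monomial basis and matching the two highest coefficients, and then to obtain \eqref{eq:ismzen-8} as an immediate consequence of orthogonality together with that formula. First I would observe that $\phi_n(x;\alpha) - a_n(\alpha)\phi_n(x;\alpha+1)$ is a polynomial of degree at most $n$, and that with the stated choice $a_n(\alpha) = c_0(n,\alpha)/c_0(n,\alpha+1)$ the leading terms cancel, so the difference has degree at most $n-1$. The key point is that this difference is in fact orthogonal (with respect to $d\mu(x;\alpha+1)$) to every polynomial of degree at most $n-2$: for such a test polynomial $p$,
\begin{gather*}
\int_0^\infty \phi_n(x;\alpha)\,p(x)\,d\mu(x;\alpha+1) = \int_0^\infty \phi_n(x;\alpha)\,\big(x\,p(x)\big)\,d\mu(x;\alpha) = 0,
\end{gather*}
since $x\,p(x)$ has degree at most $n-1$, and similarly $\int_0^\infty \phi_n(x;\alpha+1)\,p(x)\,d\mu(x;\alpha+1)=0$ because $\phi_n(\cdot;\alpha+1)$ is orthogonal to all lower-degree polynomials against $d\mu(x;\alpha+1)$. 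A polynomial of degree at most $n-1$ that is $d\mu(x;\alpha+1)$-orthogonal to all polynomials of degree at most $n-2$ must be a scalar multiple of $\phi_{n-1}(x;\alpha+1)$; this gives \eqref{eq:ismzen-4} with some constant $b_n(\alpha)$.

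Next I would pin down the constant $b_n(\alpha)$ by comparing the coefficient of $x^{n-1}$ on both sides of \eqref{eq:ismzen-4}. On the left the coefficient is $c_1(n,\alpha) - a_n(\alpha)\,c_1(n,\alpha+1) = c_1(n,\alpha) - \dfrac{c_0(n,\alpha)}{c_0(n,\alpha+1)}\,c_1(n,\alpha+1)$, while on the right it is $b_n(\alpha)\,c_0(n-1,\alpha+1)$. Solving for $b_n(\alpha)$ and clearing the denominator $c_0(n,\alpha+1)$ yields exactly the stated expression
\begin{gather*}
b_n(\alpha) = \frac{c_0(n,\alpha+1)c_1(n,\alpha) - c_0(n,\alpha)c_1(n,\alpha+1)}{c_0(n-1,\alpha+1)\,c_0(n,\alpha+1)}.
\end{gather*}
One should note here that $c_0(n,\alpha)\neq 0$ for all $n$ and all admissible $\alpha$, since it is (up to normalization) a leading coefficient of an orthogonal polynomial; this was already remarked in the text via the fact that the $\phi_n$ have all their zeros in $(0,\infty)$, and in particular $\phi_n(0;\alpha)\neq 0$.

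Finally, for \eqref{eq:ismzen-8} I would multiply \eqref{eq:ismzen-4} by $x$ and integrate against $d\mu(x;\alpha)$, equivalently multiply \eqref{eq:ismzen-4} by $\phi_{n-1}(x;\alpha+1)$ and integrate against $d\mu(x;\alpha+1)$:
\begin{gather*}
\int_0^\infty \phi_n(x;\alpha)\phi_{n-1}(x;\alpha+1)\,d\mu(x;\alpha+1) - a_n(\alpha)\int_0^\infty \phi_n(x;\alpha+1)\phi_{n-1}(x;\alpha+1)\,d\mu(x;\alpha+1) = b_n(\alpha)\,\zeta_{n-1}(\alpha+1).
\end{gather*}
The second integral on the left vanishes by orthogonality of $\{\phi_n(\cdot;\alpha+1)\}$, and rewriting the first integral as $\int_0^\infty x\,\phi_n(x;\alpha)\phi_{n-1}(x;\alpha+1)\,d\mu(x;\alpha)$ gives precisely \eqref{eq:ismzen-8}. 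I do not anticipate a serious obstacle here; the only point requiring a little care is the orthogonality argument identifying the degree-$(n-1)$ difference as a multiple of $\phi_{n-1}(\cdot;\alpha+1)$ — in particular checking that the measure shift $d\mu(x;\alpha)\mapsto d\mu(x;\alpha+1) = x\,d\mu(x;\alpha)$ is exactly what converts "orthogonal to $x\cdot(\text{degree}\le n-2)$ against $d\mu(x;\alpha)$" into "orthogonal to degree $\le n-2$ against $d\mu(x;\alpha+1)$", which is where the relation $d\mu(x;\alpha)=x^\alpha d\nu(x)$ is used.
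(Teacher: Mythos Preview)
Your proposal is correct and follows essentially the same route as the paper: both note that the choice of $a_n(\alpha)$ kills the leading term, then use the measure shift $d\mu(x;\alpha+1)=x\,d\mu(x;\alpha)$ to show the difference is $d\mu(\cdot;\alpha+1)$-orthogonal to all polynomials of degree $\le n-2$, hence a multiple of $\phi_{n-1}(x;\alpha+1)$, and finally match the $x^{n-1}$ coefficient to identify $b_n(\alpha)$; the integral identity then drops out of the same orthogonality computation. One small slip in your aside: the paper's remark that $\phi_n(0;\alpha)\neq 0$ concerns the constant term $c_n(n,\alpha)$, not the leading coefficient $c_0(n,\alpha)$, whose nonvanishing is immediate from $\deg\phi_n=n$.
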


\begin{proof}
It is clear that $\phi_{n}(x;\alpha)-a_{n}(\alpha)\phi_{n}(x;\alpha+1)$
is a polynomial of degree at most $n-1$, then
\begin{gather*}
\phi_{n}(x;\alpha)-a_{n}(\alpha)\phi_{n}(x;\alpha+1)=\sum_{j=0}^{n-1}\alpha_{j}\phi_{j}(x;\alpha+1),
\end{gather*}
where
\begin{gather*}
\alpha_{j}\zeta_{j}(\alpha+1) = \int_{0}^{\infty} \{ \phi_{n}(x;\alpha)-a_{n}(\alpha)\phi_{n}(x;\alpha+1) \} \phi_{j}(x;\alpha+1)d\mu(x;\alpha+1)\\
\hphantom{\alpha_{j}\zeta_{j}(\alpha+1)}{}
 = \int_{0}^{\infty}\phi_{n}(x;\alpha)\phi_{j}(x;\alpha+1)d\mu(x;\alpha+1)\\
\hphantom{\alpha_{j}\zeta_{j}(\alpha+1)}{}
 = \int_{0}^{\infty}x\phi_{n}(x;\alpha)\phi_{j}(x;\alpha+1)d\mu(x;\alpha).
\end{gather*}
Clearly $\alpha_{j}=0$ for $0\le j\le n-2$ and $\alpha_{n-1}=b_{n}(\alpha)$.
Then $b_{n}(\alpha)$ is obtained by matching the leading coef\/f\/icients
of two sides.
\end{proof}

\begin{Corollary}
\label{cor:ismzen2} Under the assumptions of Proposition~{\rm \ref{prop:ismzen1}},
let
\begin{gather}
\phi_{n}(x;\alpha+1)=\sum_{j=0}^{n}\lambda_{j}(n,\alpha)\phi_{j}(x;\alpha).\label{eq:ismzen-9}
\end{gather}
Then
\begin{gather}
\lambda_{j}(n,\alpha) = \frac{(-1)^{n-j}}{a_{j}(\alpha)}\prod_{k=0}^{n-j-1}\frac{b_{n-k}(\alpha)}{a_{n-k}(\alpha)},\qquad 0\le j\le n-1,\label{eq:ismzen-10}\\
\lambda_{n}(n,\alpha) = \frac{1}{a_{n}(\alpha)}\label{eq:ismzen-11}
\end{gather}
and
\begin{gather}
\frac{\zeta_{n}(\alpha)}{\zeta_{0}(\alpha+n)} = \prod_{j=0}^{n-1}\frac{b_{n-j}(\alpha+j)c_{0}(n-j,\alpha+j)}{a_{n-j-1}(\alpha+j)c_{0}(n-j-1,\alpha+j)}.\label{eq:ismzen-12}
\end{gather}
Furthermore, we have
\begin{gather*}
d_{n}(\alpha)\phi_{n}(x;\alpha)=\phi_{n}(x;\alpha+1)-x\sum_{j=0}^{n-1}f_{j}(n,\alpha)\phi_{j}(x;\alpha+1)
\end{gather*}
and
\begin{gather*}
\phi_{n}(x;\alpha)=\phi_{n}(0;\alpha)-x\sum_{j=0}^{n-1}g_{j}(n,\alpha)\phi_{j}(x;\alpha+1), 
\end{gather*}
where
\begin{gather*}
d_{n}(\alpha) = \frac{c_{n}(n,\alpha+1)}{c_{n}(n,\alpha)},
\\
f_{k}(n,\alpha)=\sum_{j=0}^{k}\frac{\lambda_{j}(n,\alpha)\lambda_{j}(k,\alpha)\zeta_{j}(\alpha+1)}{\zeta_{k}(\alpha+1)},\qquad 0\le k\le n-1,
\\
g_{k}(n,\alpha)=\frac{(-1)^{k}\zeta_{0}(\alpha)c_{0}(0,\alpha+1)c_{n}(n,\alpha)}{\zeta_{k}(\alpha+1)}
\prod_{j=0}^{k-1}\frac{b_{k-j}(\alpha)}{a_{k-j}(\alpha)}.
\end{gather*}
 \end{Corollary}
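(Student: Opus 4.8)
Here is a plan for proving Corollary~\ref{cor:ismzen2}.

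\emph{Setup and Step 1 (the coefficients $\lambda_j(n,\alpha)$).} Everything will be extracted from the single relation \eqref{eq:ismzen-4} of Proposition~\ref{prop:ismzen1}, the orthogonality of the two families $\{\phi_n(\cdot;\alpha)\}$ and $\{\phi_n(\cdot;\alpha+1)\}$, and the identity $d\mu(x;\alpha+1)=x\,d\mu(x;\alpha)$. First I would rewrite \eqref{eq:ismzen-4} as
\begin{gather*}
\phi_{n}(x;\alpha+1)=\frac{1}{a_{n}(\alpha)}\phi_{n}(x;\alpha)-\frac{b_{n}(\alpha)}{a_{n}(\alpha)}\phi_{n-1}(x;\alpha+1),
\end{gather*}
substitute \eqref{eq:ismzen-9} (with $n$ replaced by $n-1$) for the last term, and compare coefficients of $\phi_j(x;\alpha)$ for $0\le j\le n$. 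This yields $\lambda_n(n,\alpha)=1/a_n(\alpha)$, which is \eqref{eq:ismzen-11}, together with the recursion $\lambda_j(n,\alpha)=-\,(b_n(\alpha)/a_n(\alpha))\,\lambda_j(n-1,\alpha)$ for $0\le j\le n-1$; iterating down to $\lambda_j(j,\alpha)=1/a_j(\alpha)$ gives \eqref{eq:ismzen-10}. This step is routine.

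\emph{Step 2 (the norm ratio \eqref{eq:ismzen-12}).} I would first establish a one-step relation and then telescope. In \eqref{eq:ismzen-8} the factor $x\phi_{n-1}(x;\alpha+1)$ is a polynomial of degree $n$ with leading coefficient $c_0(n-1,\alpha+1)$, so writing it as $\frac{c_0(n-1,\alpha+1)}{c_0(n,\alpha)}\phi_n(x;\alpha)$ plus lower-degree terms and invoking the orthogonality of $\{\phi_m(\cdot;\alpha)\}$ gives
\begin{gather*}
\frac{c_{0}(n-1,\alpha+1)}{c_{0}(n,\alpha)}\,\zeta_{n}(\alpha)=b_{n}(\alpha)\,\zeta_{n-1}(\alpha+1).
\end{gather*}
Replacing $(n,\alpha)$ by $(n-j,\alpha+j)$ for $j=0,1,\dots,n-1$ and multiplying the resulting identities telescopes $\zeta_n(\alpha)$ down to $\zeta_0(\alpha+n)$; re-expressing the constants through $a_m(\alpha)=c_0(m,\alpha)/c_0(m,\alpha+1)$ produces \eqref{eq:ismzen-12}.

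\emph{Step 3 (the two ``Furthermore'' identities).} Since the zeros of $\phi_n$ are positive we have $\phi_n(0;\alpha)=c_n(n,\alpha)\ne0$, so $d_n(\alpha)=c_n(n,\alpha+1)/c_n(n,\alpha)$ is well defined and $\phi_n(x;\alpha+1)-d_n(\alpha)\phi_n(x;\alpha)$ vanishes at $x=0$; hence it equals $x$ times a polynomial of degree $\le n-1$, which I expand as $x\sum_{j=0}^{n-1}f_j(n,\alpha)\phi_j(x;\alpha+1)$. To extract $f_k(n,\alpha)$ I multiply the identity
\begin{gather*}
\frac{\phi_{n}(x;\alpha+1)-d_{n}(\alpha)\phi_{n}(x;\alpha)}{x}=\sum_{j=0}^{n-1}f_{j}(n,\alpha)\phi_{j}(x;\alpha+1)
\end{gather*}
by $\phi_k(x;\alpha+1)$ and integrate against $d\mu(x;\alpha+1)$; since $d\mu(x;\alpha+1)/x=d\mu(x;\alpha)$ the left side becomes an integral against $d\mu(x;\alpha)$ in which the $d_n(\alpha)\phi_n(x;\alpha)$ term is killed by orthogonality (its partner has degree $k<n$), leaving $\int_0^\infty\phi_n(x;\alpha+1)\phi_k(x;\alpha+1)\,d\mu(x;\alpha)$. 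Inserting \eqref{eq:ismzen-9} for both factors and using the orthogonality of $\{\phi_i(\cdot;\alpha)\}$ collapses this to the advertised finite sum over $\lambda_i(n,\alpha)\lambda_i(k,\alpha)$. The last identity is obtained the same way starting from $\phi_n(x;\alpha)-\phi_n(0;\alpha)=-x\sum_{j=0}^{n-1}g_j(n,\alpha)\phi_j(x;\alpha+1)$: now the constant $\phi_n(0;\alpha)=c_n(n,\alpha)$ contributes $c_n(n,\alpha)\int_0^\infty\phi_k(x;\alpha+1)\,d\mu(x;\alpha)$, which by \eqref{eq:ismzen-9} and orthogonality is a multiple of $\lambda_0(k,\alpha)\zeta_0(\alpha)$, and substituting \eqref{eq:ismzen-10} for $\lambda_0(k,\alpha)$ gives $g_k(n,\alpha)$.

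Conceptually the proof is light — each step is ``expand in an orthogonal basis and compute an inner product'' — so the real work, and the only place where slips are likely, is the careful tracking of the chain of parameter-shifted normalizing constants $a_m(\cdot)$, $b_m(\cdot)$, $c_0(m,\cdot)$, $c_m(m,\cdot)$ and $\zeta_m(\cdot)$ at the arguments $\alpha,\alpha+1,\dots$; verifying the telescoping behind \eqref{eq:ismzen-12} and the simplification of the sums defining $f_k(n,\alpha)$ and $g_k(n,\alpha)$ is where I would expect to spend most of the effort.
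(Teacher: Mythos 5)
Your proposal is correct and takes essentially the same route as the paper: the $\lambda_j$ recursion comes from substituting \eqref{eq:ismzen-9} into \eqref{eq:ismzen-4} and comparing coefficients, \eqref{eq:ismzen-12} comes from turning \eqref{eq:ismzen-8} into a one-step relation between $\zeta_n(\alpha)$ and $\zeta_{n-1}(\alpha+1)$ and telescoping, and $f_k(n,\alpha)$, $g_k(n,\alpha)$ are extracted exactly as you describe by integrating against $d\mu(x;\alpha)$ and using $d\mu(x;\alpha+1)=x\,d\mu(x;\alpha)$. The only (immaterial) difference is in Step~2, where you read off the coefficient of $\phi_n(x;\alpha)$ in $x\phi_{n-1}(x;\alpha+1)$ directly by matching leading coefficients, whereas the paper routes the same computation through the connection coefficients $\lambda_j(n-1,\alpha)$ and the three-term recurrence of $\{\phi_n(\cdot\,;\alpha)\}$ --- both yield the same one-step relation.
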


\begin{proof}
From \eqref{eq:ismzen-9} and \eqref{eq:ismzen-4} we get
\begin{gather*}
\phi_{n}(x;\alpha)-a_{n}(\alpha)\sum_{j=0}^{n}\lambda_{j}(n,\alpha)\phi_{j}(x;\alpha)=b_{n}(\alpha)\sum_{j=0}^{n-1}\lambda_{j}(n-1,\alpha)\phi_{j}(x;\alpha).
\end{gather*}
Then
\begin{gather*}
1-\lambda_{n}(n,\alpha)a_{n}(\alpha)  =  0,
\end{gather*}
and
\begin{gather*}
-a_{n}(\alpha)\lambda_{j}(n,\alpha)=b_{n}(\alpha)\lambda_{j}(n-1,\alpha)
\end{gather*}
for $0\le j\le n-1$, which lead to \eqref{eq:ismzen-11} and \eqref{eq:ismzen-10}
respectively.

From \eqref{eq:ismzen-8} we get \eqref{eq:ismzen-9} we get
\begin{gather*}
b_{n}(\alpha)\zeta_{n-1}(\alpha+1)   =   \lambda_{n-1}(n-1,\alpha)\int_{0}^{\infty}x\phi_{n}(x;\alpha)\phi_{n-1}(x;\alpha)d\mu(x;\alpha)\\
\hphantom{b_{n}(\alpha)\zeta_{n-1}(\alpha+1) }{} =   \frac{1}{a_{n-1}(\alpha)}\int_{0}^{\infty}x\phi_{n}(x;\alpha)\phi_{n-1}(x;\alpha)d\mu(x;\alpha).
\end{gather*}
Since $\{\phi_{n}(x;\alpha)\}_{n=0}^{\infty}$ are orthogonal polynomials
with respective to the measure $d\mu(x;\alpha)$, hence
\begin{gather*}
x\phi_{n-1}(x;\alpha)=\eta_{n}\phi_{n}(x;\alpha)+\delta_{n}\phi_{n-1}(x;\alpha)+\epsilon_{n}\phi_{n-2}(x;\alpha),
\end{gather*}
 where
\begin{gather*}
\eta_{n}=\frac{c_{0}(n-1,\alpha)}{c_{0}(n,\alpha)}.
\end{gather*}
Then
\begin{gather*}
b_{n}(\alpha)\zeta_{n-1}(\alpha+1)=\frac{\eta_{n}\zeta_{n}(\alpha)}{a_{n-1}(\alpha)},
\end{gather*}
and
\begin{gather*}
\zeta_{n}(\alpha) = \frac{b_{n}(\alpha)}{a_{n-1}(\alpha)}\frac{c_{0}(n,\alpha)}{c_{0}(n-1,\alpha)}\zeta_{n-1}(\alpha+1)\\
\hphantom{\zeta_{n}(\alpha)}{} = \zeta_{0}(\alpha+n)\prod_{j=0}^{n-1}\frac{b_{n-j}(\alpha+j)}{a_{n-j-1}(\alpha+j)}\frac{c_{0}(n-j,\alpha+j)}{c_{0}(n-j-1,\alpha+j)},
\end{gather*}
which gives \eqref{eq:ismzen-12}.

Let
\begin{gather*}
\phi_{n}(x;\alpha+1)-d_{n}(\alpha)\phi_{n}(x;\alpha)=x\sum_{j=0}^{n-1}f_{j}(n,\alpha)\phi_{j}(x;\alpha+1).
\end{gather*}
 For $0\le k\le n-1$ we have
\begin{gather*}
 \int_{0}^{\infty}\left\{ \phi_{n}(x,\alpha+1)-d_{n}(\alpha)\phi_{n}(x,\alpha)\right\} \phi_{k}(x,\alpha+1)d\mu(x;\alpha)\\
\qquad{} =\int_{0}^{\infty}\left\{ x\sum_{j=0}^{n-1}f_{j}(n,\alpha)\phi_{j}(x,\alpha+1)\right\} \phi_{k}(x,\alpha+1)d\mu(x;\alpha)\\
\qquad{} =\int_{0}^{\infty}\left\{ \sum_{j=0}^{n-1}f_{j}(n,\alpha)\phi_{j}(x,\alpha+1)\right\} \phi_{k}(x,\alpha+1)d\mu(x;\alpha+1)\\
\qquad{} =f_{k}(n,\alpha)\zeta_{k}(\alpha+1),
\end{gather*}
which is
\begin{gather*}
f_{k}(n,\alpha)\zeta_{k}(\alpha+1) = \int_{0}^{\infty}\phi_{n}(x,\alpha+1)\phi_{k}(x,\alpha+1)d\mu(x;\alpha)\\
 \hphantom{f_{k}(n,\alpha)\zeta_{k}(\alpha+1)}{} = \sum_{j=0}^{k}\lambda_{j}(n,\alpha)\lambda_{j}(k,\alpha)\zeta_{j}(\alpha+1),
\end{gather*}
 or
\begin{gather*}
f_{k}(n,\alpha)=\sum_{j=0}^{k}\frac{\lambda_{j}(n,\alpha)\lambda_{j}(k,\alpha)\zeta_{j}(\alpha+1)}{\zeta_{k}(\alpha+1)}.
\end{gather*}
 On the other hand, let
\begin{gather*}
\phi_{n}(x;\alpha)=\phi_{n}(0;\alpha)-x\sum_{j=0}^{n-1}g_{j}(n,\alpha)\phi_{j}(x;\alpha+1),
\end{gather*}
then, for $0\le k\le n-1$, we have
\begin{gather*}
 \int_{0}^{\infty}\left\{ \phi_{n}(x;\alpha)-\phi_{n}(0;\alpha)\right\} \phi_{k}(x,\alpha+1)d\mu(x;\alpha)\\
\qquad{} =-\int_{0}^{\infty}\left\{ x\sum_{j=0}^{n-1}g_{j}(n,\alpha)\phi_{j}(x;\alpha+1)\right\} \phi_{k}(x,\alpha+1)d\mu(x;\alpha)\\
\qquad{} =-\int_{0}^{\infty}\left\{ \sum_{j=0}^{n-1}g_{j}(n,\alpha)\phi_{j}(x;\alpha+1)\right\} \phi_{k}(x,\alpha+1)d\mu(x;\alpha+1)\\
\qquad{} =-g_{k}(n,\alpha)\zeta_{k}(\alpha+1),
\end{gather*}
that is
\begin{gather*}
 g_{k}(n,\alpha)\zeta_{k}(\alpha+1) = \int_{0}^{\infty}\phi_{n}(0;\alpha)\phi_{k}(x,\alpha+1)d\mu(x;\alpha)\\
\hphantom{g_{k}(n,\alpha)\zeta_{k}(\alpha+1)}{}
= \phi_{n}(0;\alpha)\lambda_{0}(k,\alpha)\phi_{0}(0,\alpha)\zeta_{0}(\alpha)
 = c_{n}(n,\alpha)c_{0}(0,\alpha)\zeta_{0}(\alpha)\lambda_{0}(k,\alpha)\\
\hphantom{g_{k}(n,\alpha)\zeta_{k}(\alpha+1)}{}
 = (-1)^{k}\zeta_{0}(\alpha)c_{0}(0,\alpha+1)c_{n}(n,\alpha)\prod_{j=0}^{k-1}\frac{b_{k-j}(\alpha)}{a_{k-j}(\alpha)},
\end{gather*}
that is
\begin{gather*}
g_{k}(n,\alpha)=\frac{(-1)^{k}\zeta_{0}(\alpha)c_{0}(0,\alpha+1)c_{n}(n,\alpha)}{\zeta_{k}(\alpha+1)}
\prod_{j=0}^{k-1}\frac{b_{k-j}(\alpha)}{a_{k-j}(\alpha)}.\tag*{\qed}
\end{gather*}
 \renewcommand{\qed}{}
\end{proof}

\begin{Corollary}
\label{cor:ismzen3} Let $\phi_{n}(x,\alpha)$, $a_{n}(\alpha)$, $b_{n}(\alpha)$
as in Proposition~{\rm \ref{prop:ismzen1}}, for $m\ge n-1$ the equa\-tion~\eqref{eqrecrel2} is in the form
\begin{gather}
z_{1}f_{m,n}(z_{1},z_{2})-v_{m,n}f_{m+1,n}(z_{1},z_{2})=u_{m,n}f_{m,n-1}(z_{1},z_{2}),\label{eq:ismzen-18}
\end{gather}
 where
\begin{gather*}
u_{m,n} = b_{n}(m-n),\qquad v_{m,n}=a_{n}(m-n).
\end{gather*}
 \end{Corollary}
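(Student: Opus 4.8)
The content of this corollary is that the recurrence \eqref{eqrecrel2}, specialised to $\beta=0$, is nothing but the contiguity relation \eqref{eq:ismzen-4} of Proposition~\ref{prop:ismzen1} transported to the two-variable polynomials. So the plan is to start from \eqref{eq:ismzen-4}, put $\alpha=m-n$, replace $x$ by $z_1z_2$, and multiply both sides by $z_1^{m-n+1}$. Using the definition \eqref{eqdeffmn} with $\beta=0$ one then reads off
\[
z_1^{m-n+1}\phi_n(z_1z_2;m-n)=z_1 f_{m,n}(z_1,z_2),\qquad z_1^{m-n+1}\phi_n(z_1z_2;m-n+1)=f_{m+1,n}(z_1,z_2),
\]
together with $z_1^{m-n+1}\phi_{n-1}(z_1z_2;m-n+1)=f_{m,n-1}(z_1,z_2)$, so that \eqref{eq:ismzen-4} becomes precisely \eqref{eq:ismzen-18} with $v_{m,n}=a_n(m-n)$ and $u_{m,n}=b_n(m-n)$.

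It then remains to check that this is the same as ``\eqref{eqrecrel2} in the form \eqref{eq:ismzen-18}'', i.e.\ to match the coefficients. For the $f_{m+1,n}$ coefficient this is immediate: with $\beta=0$ the coefficient in \eqref{eqrecrel2} is $c_0(n,m-n)/c_0(n,m+1-n)$, which is exactly the expression defining $a_n(m-n)$ in Proposition~\ref{prop:ismzen1}. For $u_{m,n}$ one substitutes $\alpha=m-n$ into the displayed formula for $u_{m,n}$ following \eqref{eqrecrel2} and observes that it coincides term by term with
\[
b_n(\alpha)=\frac{c_0(n,\alpha+1)c_1(n,\alpha)-c_0(n,\alpha)c_1(n,\alpha+1)}{c_0(n-1,\alpha+1)c_0(n,\alpha+1)}
\]
from Proposition~\ref{prop:ismzen1}. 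Hence $v_{m,n}=a_n(m-n)$ and $u_{m,n}=b_n(m-n)$, as claimed.

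The only delicate point, and the one I would spend the most care on, is the endpoint $m=n-1$ (so $\alpha=m-n=-1$), which is included in the stated range $m\ge n-1$ but sits on the boundary of the hypothesis $\alpha>-1$ of Proposition~\ref{prop:ismzen1}. Here $f_{m,n}=f_{n-1,n}$ must first be rewritten via \eqref{eq:symmetry-condition} as $f_{n,n-1}(z_2,z_1)$, and one must justify that \eqref{eq:ismzen-4} (equivalently \eqref{eqrecrel2}) still holds at $\alpha=-1$. Since the leading coefficients $c_j(n,\alpha)$, and hence $a_n(\alpha)$, $b_n(\alpha)$ and the polynomials $\phi_n(x;\alpha)$, depend rationally on $\alpha$, the identity \eqref{eq:ismzen-4} --- a polynomial identity in $x$ whose coefficients are rational in $\alpha$ --- extends from $\alpha>-1$ to $\alpha=-1$ by analytic continuation; equivalently, one notes that $\phi_n(x;-1)$ is a scalar multiple of $x\phi_{n-1}(x;1)$, which is exactly what keeps the degenerate term polynomial. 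With this remark the argument goes through for all $m\ge n-1$, completing the proof.
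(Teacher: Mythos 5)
Your proof is correct and follows essentially the same route as the paper's one-line argument: substitute $x=z_1z_2$, $\alpha=m-n$ into \eqref{eq:ismzen-4} and multiply through by $z_1^{m-n+1}$, so that each term becomes an $f$-polynomial via \eqref{eqdeffmn} and the coefficients are read off as $a_n(m-n)$ and $b_n(m-n)$. Your extra care at the endpoint $m=n-1$, where $f_{n-1,n}$ must be interpreted through \eqref{eq:symmetry-condition} and the identity continued to $\alpha=-1$, goes beyond what the paper records (it simply invokes the definition valid for $m\ge n$), and is a worthwhile addition.
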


\begin{proof}
For $m\ge n-1$, applying the def\/inition $f_{m,n}(z_{1},z_{2})=z_{1}^{m-n}\phi_{n}(z_{1}z_{2};m-n)$, $m\ge n$
to get~\eqref{eq:ismzen-18}.
\end{proof}

We next consider dif\/ferential or $q$-dif\/ference equations satisf\/ied by $f_{m,n}(z_1, z_2)$.

\begin{Theorem}\label{theorem2.4}
Assume that $\phi_{n}(r;\alpha)$ satisfies the second-order differential
equation
\begin{gather*}
A_{n}(r,\alpha)\delta_{r}^{2}f+B_{n}(r,\alpha)\delta_{r}f+C_n(r,\alpha)f=0,
\end{gather*}
where
\begin{gather*}
\delta_{r}f=r\frac{\partial}{\partial r}f.
\end{gather*}
Then $f_{m,n}(z_{1},z_{2};\beta)$, for $\alpha=m-n+\beta$ and $m\ge n$, satisfies the second-order partial
differential equations
\begin{gather*}
A_{n}(z_{1}z_{2},\alpha)\delta_{z_{2}}^{2}f+B_{n}(z_{1}z_{2},\alpha)\delta_{z_{2}}f+C_n(z_{1}z_{2},\alpha)f=0,
\\
 A_{n}(z_{1}z_{2},\alpha)\delta_{z_{1}}^{2}f+\left\{ B_{n}(z_{1}z_{2},\alpha)-2\alpha A_{n}(z_{1}z_{2},\alpha)\right\} \delta_{z_{1}}f\\
\qquad{} +\left\{ \alpha^{2}A_{n}(z_{1}z_{2},\alpha)-\alpha B_{n}(z_{1}z_{2},\alpha)+C_n(z_{1}z_{2},\alpha)\right\} f=0.
\end{gather*}
Similarly, assume that $\phi_{n}(r;\alpha)$ satisfies the second
order $q$-difference equation
\begin{gather*}
A_{n}(r,\alpha)\delta_{q,r}^{2}f+B_{n}(r,\alpha)\delta_{q,r}f+C_n(r,\alpha)f=0,
\end{gather*}
where
\begin{gather*}
\delta_{q,r}f=rD_{q,r}f.
\end{gather*}
Then $f_{m,n}(z_{1},z_{2})$, for $m\ge n$ and $\alpha=m-n+\beta$, satisfies the $q$-partial
difference equations
\begin{gather*}
A_{n}(z_{1}z_{2},\alpha)\delta_{q,z_{2}}^{2}f+B_{n}(z_{1}z_{2},\alpha)\delta_{q,z_{2}}f+C_n(z_{1}z_{2},\alpha)f=0,
\\
 A_{n}(z_{1}z_{2},\alpha)\delta_{q,z_{1}}^{2}f+\big\{ q^{\alpha}B_{n}(z_{1}z_{2},\alpha)-2[\alpha]_{q}A_{n}(z_{1}z_{2},\alpha)\big\} \delta_{q,z_{1}}f\\
\qquad{} + \big\{ [\alpha]_{q}^{2}A_{n}(z_{1}z_{2},\alpha)-[\alpha]_{q}q^{\alpha}B_{n}(z_{1}z_{2},\alpha)+q^{2\alpha}C_n(z_{1}z_{2},\alpha)\big\} f=0.
\end{gather*}
\end{Theorem}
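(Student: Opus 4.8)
The plan is to use the explicit factored form $f_{m,n}(z_1,z_2;\beta)=z_1^{m-n}\phi_n(z_1z_2;\alpha)$, valid for $m\ge n$ with $\alpha=m-n+\beta$, and to track how the Euler (resp.\ $q$-Euler) operators act through this factorization. Write $r=z_1z_2$ and $p=m-n$. By the chain rule, for any function $g$ of the single variable $r=z_1z_2$ one has $\delta_{z_2}g(z_1z_2)=z_2\partial_{z_2}g(z_1z_2)=z_1z_2\,g'(z_1z_2)=(\delta_r g)(r)$; since the prefactor $z_1^{p}$ is independent of $z_2$, it follows that $\delta_{z_2}^{k}f_{m,n}=z_1^{p}\,(\delta_r^{k}\phi_n)(r)$ for every $k$. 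Substituting $r=z_1z_2$ into the hypothesis $A_n(r,\alpha)\delta_r^{2}\phi_n+B_n(r,\alpha)\delta_r\phi_n+C_n(r,\alpha)\phi_n=0$ and multiplying by $z_1^{p}$ then yields the first asserted equation at once; the substitution is legitimate because the hypothesis is an identity of polynomial functions valid for all $r>0$.

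For the second equation I would first establish the operator identity $\delta_{z_1}\big(z_1^{p}g(z_1z_2)\big)=z_1^{p}\big(p+\delta_r\big)g$, i.e.\ conjugating $\delta_{z_1}$ by the prefactor produces $\delta_r+p$ on functions of $r$; iterating gives $\delta_{z_1}^{k}f_{m,n}=z_1^{p}(p+\delta_r)^{k}\phi_n$, equivalently $z_1^{p}\,\delta_r^{k}\phi_n=(\delta_{z_1}-p)^{k}f_{m,n}$ for $k=0,1,2$. I would then multiply the ODE for $\phi_n$ by $z_1^{p}$, replace each $z_1^{p}\delta_r^{j}\phi_n$ by $(\delta_{z_1}-p)^{j}f_{m,n}$, and expand the binomials; collecting the coefficients of $\delta_{z_1}^{2}f$, $\delta_{z_1}f$, $f$ gives $A_n$, $B_n-2pA_n$, $p^{2}A_n-pB_n+C_n$, which is the displayed partial differential equation ($p=m-n$ playing the role of $\alpha$ in the coefficient positions).

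The $q$-case is handled the same way, and this is where the only real work lies. For $\delta_{q,z_2}$ nothing changes: $D_{q,z_2}g(z_1z_2)=z_1(D_{q,r}g)(r)$, so $\delta_{q,z_2}^{k}f_{m,n}=z_1^{p}(\delta_{q,r}^{k}\phi_n)(r)$ and the first $q$-equation falls out exactly as above. For $\delta_{q,z_1}$ I would apply the $q$-Leibniz rule $D_{q,x}(uv)=u\,D_{q,x}v+(\eta_q v)\,D_{q,x}u$ (where $\eta_q h(x)=h(qx)$) with $u=z_1^{p}$ and $v=\phi_n(z_1z_2;\alpha)$, using $D_{q,z_1}z_1^{p}=[p]_q z_1^{p-1}$ and the identity $\eta_q=1-(1-q)\delta_{q,r}$ on functions of $r$. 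Because $1-(1-q)[p]_q=q^{p}$, this collapses to $\delta_{q,z_1}f_{m,n}=z_1^{p}\big(q^{p}\delta_{q,r}+[p]_q\big)\phi_n$, whence $z_1^{p}\,\delta_{q,r}^{k}\phi_n=q^{-kp}(\delta_{q,z_1}-[p]_q)^{k}f_{m,n}$. Multiplying the $q$-ODE by $q^{2p}z_1^{p}$, substituting, and expanding produces the coefficients $A_n$, $q^{p}B_n-2[p]_qA_n$, $[p]_q^{2}A_n-[p]_qq^{p}B_n+q^{2p}C_n$, i.e.\ the second $q$-difference equation. The main obstacle is precisely this last computation: verifying the cancellation $1-(1-q)[p]_q=q^{p}$ and, more importantly, keeping the non-commuting operators $\delta_{q,z_1}$ and multiplication by $z_1^{p}$ in the correct order throughout; the classical case has no such issue and is pure chain-rule bookkeeping.
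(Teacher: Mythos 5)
Your proof is correct and follows essentially the same route as the paper's: both arguments conjugate the Euler and $q$-Euler operators through the monomial prefactor, obtaining $\delta_{z_1}\big(z_1^{p}g(z_1z_2)\big)=z_1^{p}(\delta_r+p)g$ and its $q$-analogue $z_1^{p}\big(q^{p}\delta_{q,r}+[p]_q\big)g$, and then substitute the resulting expressions for $z_1^{p}\delta_r^{k}\phi_n$ and $z_1^{p}\delta_{q,r}^{k}\phi_n$ into the one-variable equation; your $q$-Leibniz verification of $1-(1-q)[p]_q=q^{p}$ is exactly the cancellation the paper uses implicitly. Your parenthetical observation that the exponent actually produced by the definition \eqref{eqdeffmn} is $p=m-n$ rather than $\alpha=m-n+\beta$ is well taken: the paper's own proof writes $\phi_n(z_1z_2)=z_1^{-\alpha}f_{m,n}$, which matches the theorem as stated but agrees with \eqref{eqdeffmn} only when $\beta=0$, so your bookkeeping is, if anything, the more careful of the two.
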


\begin{proof}
Observe that
\begin{gather*}
\phi_{n}(z_{1}z_{2}) = z_{1}^{-\alpha}f_{m,n}(z_{1},z_{2}), \\
\delta_{z_{1}z_{2}}\phi_{n}(z_{1}z_{2}) = z_{1}^{-\alpha}\delta_{z_{2}}f_{m,n}(z_{1},z_{2})
 = z_{1}^{-\alpha} \{ \delta_{z_{1}}-\alpha \} f\\
\delta_{z_{1}z_{2}}^{2}\phi_{n}(z_{1}z_{2}) = z_{1}^{-\alpha}\delta_{z_{2}}^{2}f_{m,n}(z_{1},z_{2})
 = z_{1}^{-\alpha}\big\{ \delta_{z_{1}}^{2}-2\alpha\delta_{z_{1}}+\alpha^{2}\big\} f\\
\delta_{q,z_{1}z_{2}}\phi_{n}(z_{1}z_{2}) = z_{1}^{-\alpha}\delta_{q,z_{2}}f_{m,n}(z_{1},z_{2})
 = q^{-\alpha}z_{1}^{-\alpha} \{ \delta_{q,z_{1}}-[\alpha]_{q} \} f,\\
\delta_{q,z_{1}z_{2}}^{2}\phi_{n}(z_{1}z_{2}) = z_{1}^{-\alpha}\delta_{q,z_{2}}^{2}f_{m,n}(z_{1},z_{2})
 = q^{-2\alpha}z_{1}^{-\alpha}\big\{ \delta_{q,z_{1}}^{2}-2[\alpha]_{q}\delta_{q,z_{1}}+[\alpha]_{q}^{2}\big\} f.
\end{gather*}
Then the proof of theorem follows from substituting the required combinations
into the corresponding dif\/ferential equation or $q$-dif\/ference equation
for $\phi(r,\alpha)$.
\end{proof}

A simple application of Fubini's theorem establishes the following theorem concerning generating functions.
\begin{Theorem}
\label{theorem:f-m-n-generating-function}
For $z_{1},z_{2},u,v\in\mathbb{C}$ such that
\begin{gather*}
\sum_{j=0}^{\infty}\frac{\left|uz_{1}\right|^{j}}{j!}\left|g_{j}(z_{1}z_{2},uv)\right|<\infty,\qquad 
\sum_{j=0}^{\infty}\frac{\left|uz_{1}\right|^{j}}{(q;q)_j}\left|g_{j}(z_{1}z_{2},uv)\right|<\infty,
\end{gather*}
where
\begin{gather*}
g_{\alpha}(x,z)=\sum_{k=0}^{\infty}\varphi_{k}(x;\alpha)z^{k}.
\end{gather*}
Then
\begin{gather}
\sum_{m\ge n\ge0}\frac{u^{m}v^{n}}{(m-n)!}f_{m,n}(z_{1},z_{2})=\sum_{j=0}^{\infty}\frac{(uz_{1})^{j}}{j!}g_{j}(z_{1}z_{2},uv),
\label{eq:f-m-n-generating-function}\\
\sum_{m\ge n\ge0}\frac{u^{m}v^{n}}{(q;q)_{m-n}}f_{m,n}(z_{1},z_{2}|q)
=\sum_{j=0}^{\infty}\frac{(uz_{1})^{j}}{(q;q)_j}g_{j}(z_{1}z_{2},uv).\nonumber 
\end{gather}
Similarly, for $z_{1},z_{2},u,v\in\mathbb{C}$ such that
\begin{gather*}
\sum_{j=0}^{\infty}\left|uz_{1}\right|^{j}\left|g_{j}(z_{1}z_{2},uv)\right|<\infty,
\end{gather*}
then
\begin{gather*}
\sum_{m\ge n\ge0}u^{m}v^{n}f_{m,n}(z_{1},z_{2})=\sum_{j=0}^{\infty}(uz_{1})^{j}g_{j}(z_{1}z_{2},uv),\\ 
\sum_{m\ge n\ge0}u^{m}v^{n}f_{m,n}(z_{1},z_{2}|q)=\sum_{j=0}^{\infty}(uz_{1})^{j}g_{j}(z_{1}z_{2},uv).
\end{gather*}
\end{Theorem}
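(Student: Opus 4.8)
The plan is to prove the first identity in~\eqref{eq:f-m-n-generating-function} in detail; the other three follow from the same manipulation, with $(m-n)!$ and $j!$ replaced by $(q;q)_{m-n}$ and $(q;q)_{j}$ in the $q$-case, and by $1$ in the two factorial-free cases. The idea is simply to re-index the double sum by the value of $j:=m-n$ and then to separate the power of $z_{1}$ produced by the definition~\eqref{eqdeffmn} from the remaining factors; the single analytic input is Fubini's theorem for absolutely summable families.

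First I would write $f_{m,n}$ for $f_{m,n}(z_{1},z_{2};\beta)$ with $\beta$ fixed (suppressed in the statement) and substitute $m=n+j$ with $j\ge 0$, so that the index set $m\ge n\ge 0$ is swept out by $j,n\ge 0$. By~\eqref{eqdeffmn}, for $m\ge n$ we have $f_{m,n}=z_{1}^{\,j}\phi_{n}(z_{1}z_{2};j+\beta)$, so that, as formal series,
\[
\sum_{m\ge n\ge0}\frac{u^{m}v^{n}}{(m-n)!}f_{m,n}
=\sum_{j=0}^{\infty}\sum_{n=0}^{\infty}\frac{u^{n+j}v^{n}}{j!}\,z_{1}^{\,j}\,\phi_{n}(z_{1}z_{2};j+\beta)
=\sum_{j=0}^{\infty}\frac{(uz_{1})^{j}}{j!}\sum_{n=0}^{\infty}(uv)^{n}\phi_{n}(z_{1}z_{2};j+\beta).
\]
The inner sum is exactly $g_{j+\beta}(z_{1}z_{2},uv)$ by the definition $g_{\alpha}(x,z)=\sum_{k\ge0}\varphi_{k}(x;\alpha)z^{k}$ (with $\varphi_{k}=\phi_{k}$, and $g_{j}$ denoting $g_{j+\beta}$ in the shorthand of the statement), which is the right-hand side of~\eqref{eq:f-m-n-generating-function}.

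The only substantive point is to legitimise the interchange of summation order and the grouping of terms by $j=m-n$, and this is precisely what the hypothesis secures: taking absolute values term by term,
\[
\sum_{m\ge n\ge0}\left|\frac{u^{m}v^{n}}{(m-n)!}f_{m,n}\right|
=\sum_{j=0}^{\infty}\frac{|uz_{1}|^{j}}{j!}\sum_{n=0}^{\infty}|uv|^{n}\bigl|\phi_{n}(z_{1}z_{2};j+\beta)\bigr|
=\sum_{j=0}^{\infty}\frac{|uz_{1}|^{j}}{j!}\,\bigl|g_{j}(z_{1}z_{2},uv)\bigr|<\infty,
\]
where $|g_{j}(z_{1}z_{2},uv)|$ is read as the majorant obtained by replacing each summand of $g_{j}$ by its modulus. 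Hence the doubly indexed family is absolutely summable and Fubini's theorem justifies both rearrangements, establishing the first identity. For the $q$-analogue one notes that after $m=n+j$ one has $(q;q)_{m-n}=(q;q)_{j}$, so the identical computation with $j!$ replaced by $(q;q)_{j}$ gives the second identity; the two factorial-free identities are the same computation with all these denominators equal to $1$, the relevant absolute-convergence condition being the one displayed immediately before them.

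The main obstacle is purely bookkeeping: one must carry out the substitution $m=n+j$ carefully so that the factor $z_{1}^{m-n}$ from~\eqref{eqdeffmn} combines with $u^{m}=u^{n+j}$ to produce exactly $(uz_{1})^{j}$ while the leftover $u^{n}v^{n}$ becomes $(uv)^{n}$, and one must verify that the stated hypothesis is exactly the absolute-convergence statement Fubini requires. There is no deeper difficulty, which is why the paper calls it a simple application of Fubini's theorem.
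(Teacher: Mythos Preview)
Your proposal is correct and matches the paper's own treatment exactly: the paper gives no detailed argument, merely the sentence ``A simple application of Fubini's theorem establishes the following theorem concerning generating functions.'' You have supplied precisely those details---the substitution $j=m-n$, the splitting $u^{m}v^{n}=(uv)^{n}u^{j}$, and the identification of the inner sum with $g_{j}$---and you correctly flag that $|g_{j}(z_{1}z_{2},uv)|$ in the hypothesis must be read as the term-by-term majorant for Fubini to apply.
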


\begin{Theorem}
\label{theorem:f-m-n-roderigues}Assume that
\begin{gather*}
w_{\alpha+\beta}(x)=x^{\alpha}w_{\beta}(x)
\end{gather*}
and
\begin{gather*}
\phi_{n}(x;\alpha)=\frac{1}{w_{\alpha}(x)}\partial_{x}^{n}\left(w_{\alpha}(x)x^{n}\right),\qquad n=0,1,\dots.
\end{gather*}
Then for $m\ge n$ we have
\begin{gather}
f_{m,n}(z_{1},z_{2};\beta)=\left(\frac{z_{1}}{z_{2}}\right)^{m-n}
\frac{\partial_{z_{2}}^{n}\left(w_{\beta}(z_{1}z_{2})z_{2}^{m}\right)}{w_{\beta}(z_{1}z_{2})},\label{eq:f-m-n-roderigues-2}
\\
f_{m+1,n+1}(z_{1},z_{2};\beta)=\frac{\partial_{z_{2}}\left(\left(\frac{z_{2}}{z_{1}}\right)^{m+1-n}w_{\beta}(z_{1}z_{2})
f_{m+1,n}(z_{1},z_{2};\beta)\right)}{\left(\frac{z_{2}}{z_{1}}\right)^{m-n}w_{\beta}(z_{1}z_{2})},\label{eq:f-m-n-shift-n-down-2}
\\
f_{m,n}(z_{1},z_{2};\beta)=\frac{\partial_{z_{1}}^{n}\left(w_{\beta}(z_{1}z_{2})z_{1}^{m}\right)}{w_{\beta}(z_{1}z_{2})},\label{eq:f-m-n-roderigues-1}
\\
f_{m+1,n+1}(z_{1},z_{2};\beta)=\frac{\partial_{z_{1}}\left(w_{\beta}(z_{1}z_{2})f_{m+1,n}(z_{1},z_{2};\beta)\right)}{w_{\beta}(z_{1}z_{2})}
\label{eq:f-m-n-shift-n-down-1}
\end{gather}
and
\begin{gather}
\partial_{z_{2}}\left(\left(\frac{z_{2}}{z_{1}}\right)^{m+1-n}w_{\beta}(z_{1}z_{2})f_{m+1,n}(z_{1},z_{2};\beta)\right)
=\partial_{z_{1}}\left(w_{\beta}(z_{1}z_{2})f_{m+1,n}(z_{1},z_{2};\beta)\right).\label{eq:f-m-n-pde-from-n}
\end{gather}
Furthermore, if for some nonnegative integers $s$, $t$ such that
\begin{gather*}
\frac{w_{\beta}'(x)}{w_{\beta}(x)}=\frac{\sum\limits_{j=0}^{s}a_{j}x^{j}}{\sum\limits_{j=0}^{t}b_{j}x^{j}},
\end{gather*}
where $\{a_{j}\}_{j=0}^{s}$ and $\{b_{j}\}_{j=0}^{t}$ are certain
real numbers. Then form $m\ge n$ we have
\begin{gather}
 \sum_{j=0}^{t}b_{j}z_{2}^{j}\frac{\partial_{z_{2}}\left(w_{\beta}(z_{1}z_{2})f_{m+j-1,n}(z_{1},z_{2};\beta)\right)}{w_{\beta}(z_{1}z_{2})}
= \sum_{j=0}^{s}a_{j}z_{2}^{j}f_{m+j,n}(z_{1},z_{2};\beta),
\label{eq:f-m-n-differential-m-recurrence-1}
\\ \sum_{j=0}^{t}b_{j}\frac{\partial_{z_{1}}\left(z_{1}^{n-m-j+1}w_{\beta}(z_{1}z_{2})f_{m+j-1,n}(z_{1},z_{2};\beta)\right)}
{z_{1}^{n-m-j}z_{2}^{1-j}w_{\beta}(z_{1}z_{2})}
= \sum_{j=0}^{s}a_{j}z_{2}^{j}f_{m+j,n}(z_{1},z_{2};\beta)
\label{eq:f-m-n-differential-m-recurrence-2}
\end{gather}
and
\begin{gather}
 \sum_{j=0}^{t}b_{j}z_{2}^{j}\frac{\partial_{z_{2}}\left(w_{\beta}(z_{1}z_{2})f_{m+j-1,n}(z_{1},z_{2};\beta)\right)}{w_{\beta}(z_{1}z_{2})}\nonumber\\
\qquad{} = \sum_{j=0}^{t}b_{j}\frac{\partial_{z_{1}}\left(z_{1}^{n-m-j+1}w_{\beta}(z_{1}z_{2})f_{m+j-1,n}(z_{1},z_{2};\beta)\right)}{z_{1}^{n-m-j}z_{2}^{1-j}w_{\beta}(z_{1}z_{2})}.
\label{eq:f-m-n-pde-from-m}
\end{gather}
 \end{Theorem}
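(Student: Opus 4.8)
The plan is to derive everything from the single Rodrigues-type hypothesis $\phi_{n}(x;\alpha)=w_{\alpha}(x)^{-1}\partial_{x}^{n}(w_{\alpha}(x)x^{n})$ together with the factorization $w_{\alpha+\beta}(x)=x^{\alpha}w_{\beta}(x)$, and then push each identity through the defining substitution \eqref{eqdeffmn}, namely $f_{m,n}(z_{1},z_{2};\beta)=z_{1}^{m-n}\phi_{n}(z_{1}z_{2};m-n+\beta)$ for $m\ge n$. First I would establish \eqref{eq:f-m-n-roderigues-1}: set $\alpha=m-n+\beta$, so $w_{\alpha}(x)=x^{m-n}w_{\beta}(x)$, whence $w_{\alpha}(x)x^{n}=x^{m}w_{\beta}(x)$; applying the Rodrigues formula and multiplying by $z_{1}^{m-n}$ gives $f_{m,n}=z_{1}^{m-n}w_{\alpha}(z_{1}z_{2})^{-1}\partial_{x}^{n}(x^{m}w_{\beta}(x))\big|_{x=z_{1}z_{2}}$. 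The subtle point here is that $\partial_{x}^{n}$ acting on a function of $x=z_{1}z_{2}$ must be converted to $\partial_{z_{1}}^{n}$; since $z_{2}$ is held fixed, $\partial_{x}=z_{2}^{-1}\partial_{z_{1}}$, and iterating $n$ times produces a factor $z_{2}^{-n}$, which combines with $w_{\alpha}(z_{1}z_{2})^{-1}=z_{1}^{-(m-n)}z_{2}^{-(m-n)}w_{\beta}(z_{1}z_{2})^{-1}$ to leave exactly $w_{\beta}(z_{1}z_{2})^{-1}\partial_{z_{1}}^{n}(w_{\beta}(z_{1}z_{2})z_{1}^{m})$. Formula \eqref{eq:f-m-n-roderigues-2} is the mirror computation with the roles of $z_{1},z_{2}$ exchanged, keeping careful track of the prefactor $(z_{1}/z_{2})^{m-n}$ that arises because $f_{m,n}=z_{1}^{m-n}\phi_{n}(z_{1}z_{2};\alpha)=(z_{1}/z_{2})^{m-n}z_{2}^{m-n}\phi_{n}$.

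Next I would prove the single-step raising relations \eqref{eq:f-m-n-shift-n-down-1} and \eqref{eq:f-m-n-shift-n-down-2}. These follow by comparing the $n$-fold and $(n{+}1)$-fold Rodrigues representations: writing $\partial_{z_{1}}^{n+1}(w_{\beta}z_{1}^{m+1})=\partial_{z_{1}}(\partial_{z_{1}}^{n}(w_{\beta}z_{1}^{m+1}))$ and recognizing the inner expression, via \eqref{eq:f-m-n-roderigues-1} with $m$ replaced by $m{+}1$ but $n$ unchanged, as $w_{\beta}(z_{1}z_{2})f_{m+1,n}(z_{1},z_{2};\beta)$, immediately yields \eqref{eq:f-m-n-shift-n-down-1}. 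For \eqref{eq:f-m-n-shift-n-down-2} the same manoeuvre is applied to the representation \eqref{eq:f-m-n-roderigues-2}, where the extra power $(z_{2}/z_{1})^{m+1-n}$ is what must be differentiated alongside $w_{\beta}$; the bookkeeping of that power is the one place a sign or index slip is easy, so I would verify it by checking the top-degree coefficient in $z_{2}$ on both sides. The compatibility identity \eqref{eq:f-m-n-pde-from-n} is then simply the statement that the right-hand sides of \eqref{eq:f-m-n-shift-n-down-1} and \eqref{eq:f-m-n-shift-n-down-2}, both being equal to $f_{m+1,n+1}(z_{1},z_{2};\beta)$ times the appropriate weight, must agree; one multiplies through by the common weight $(z_{2}/z_{1})^{m-n}w_{\beta}(z_{1}z_{2})$ and reads off the claim.

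For the final three identities \eqref{eq:f-m-n-differential-m-recurrence-1}--\eqref{eq:f-m-n-pde-from-m}, the idea is to encode the hypothesis on the logarithmic derivative, $w_{\beta}'(x)(\sum_{j}b_{j}x^{j})=w_{\beta}(x)(\sum_{j}a_{j}x^{j})$, as an operator identity. Starting from \eqref{eq:f-m-n-shift-n-down-1} in the form $w_{\beta}(z_{1}z_{2})f_{m+1,n+1}=\partial_{z_{1}}(w_{\beta}(z_{1}z_{2})f_{m+1,n})$, I would instead compute $\partial_{z_{1}}$ directly on $w_{\beta}(z_{1}z_{2})f_{m+1,n}$ using the product rule: the term hitting $w_{\beta}$ produces $z_{2}\,w_{\beta}'(z_{1}z_{2})f_{m+1,n}$, and one clears the denominator $\sum_{j}b_{j}(z_{1}z_{2})^{j}$ using the hypothesis so that every occurrence of $w_{\beta}'$ is replaced by a polynomial multiple of $w_{\beta}$; the shifts $z_{2}^{j}f_{m+j,n}=z_{1}f_{m+j-1,n}\cdot(z_{2}/z_{1})$-type relations from \eqref{eqdeffmn} (more precisely $z_{1}z_{2}\cdot f_{m+j-1,n}$ carrying an extra power of $x$ corresponds to raising both indices, and $z_{1}^{m-n}$ bookkeeping) let one collect the polynomial-in-$x$ weights as sums $\sum_{j}a_{j}z_{2}^{j}f_{m+j,n}$ and $\sum_{j}b_{j}z_{2}^{j}(\dots)$. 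This is the main obstacle: organizing the index shifts so that multiplication by $x^{j}=(z_{1}z_{2})^{j}$ inside the Rodrigues bracket translates cleanly into $z_{2}^{j}f_{m+j,n}$ on the outside, with the prefactors $z_{1}^{m-n}$ matching. Equation \eqref{eq:f-m-n-differential-m-recurrence-2} is obtained identically but starting from \eqref{eq:f-m-n-shift-n-down-2} (the $z_{1}$-derivative form with the $(z_{2}/z_{1})$-power), and \eqref{eq:f-m-n-pde-from-m} is again just the equality of the two resulting expressions for the common quantity, exactly as \eqref{eq:f-m-n-pde-from-n} arose. Throughout, I would treat $z_{1},z_{2}$ as independent complex variables (legitimate since all identities are polynomial), so that $\partial_{z_{1}}$ and $\partial_{z_{2}}$ genuinely commute with holding the other fixed, and only at the end restrict to $z_{2}=\bar z_{1}$.
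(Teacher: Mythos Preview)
Your treatment of \eqref{eq:f-m-n-roderigues-2}--\eqref{eq:f-m-n-pde-from-n} is correct and is exactly the paper's argument: convert $\partial_x$ to $z_2^{-1}\partial_{z_1}$ (resp.\ $z_1^{-1}\partial_{z_2}$) in the Rodrigues hypothesis, identify the weight factors, and obtain the single-step raising relations by splitting $\partial^{n+1}=\partial\circ\partial^{n}$; \eqref{eq:f-m-n-pde-from-n} then drops out as the compatibility of \eqref{eq:f-m-n-shift-n-down-1} and \eqref{eq:f-m-n-shift-n-down-2}.

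The gap is in your plan for \eqref{eq:f-m-n-differential-m-recurrence-1}--\eqref{eq:f-m-n-pde-from-m}. You propose to start from the $n$-raising relation \eqref{eq:f-m-n-shift-n-down-1}, expand $\partial_{z_1}\bigl(w_\beta f_{m+1,n}\bigr)$ by the product rule, and then absorb the factors $(z_1z_2)^j$ into index shifts ``$z_2^{j}f_{m+j,n}=z_1 f_{m+j-1,n}\cdot(z_2/z_1)$-type relations''. That last step does not exist: with $\beta$ fixed, $z_1 f_{m,n}(z_1,z_2;\beta)=z_1^{m+1-n}\phi_n(z_1z_2;m-n+\beta)$, which differs from $f_{m+1,n}(z_1,z_2;\beta)=z_1^{m+1-n}\phi_n(z_1z_2;m+1-n+\beta)$ by a shift in the \emph{parameter} of $\phi_n$, not merely in the index. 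So multiplying by powers of $z_1z_2$ cannot be traded for shifts $m\mapsto m+j$ at fixed $\beta$, and the computation stalls exactly where you flag the ``main obstacle''. Note also that \eqref{eq:f-m-n-differential-m-recurrence-1} involves $\partial_{z_2}$ and shifts in $m$ only, while your starting point produces $\partial_{z_1}$ and a shift in $n$; the shapes do not match.

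The paper's device avoids this entirely. From \eqref{eq:f-m-n-roderigues-1} one has $w_\beta(z_1z_2)f_{m,n}=\partial_{z_1}^{n}\bigl(w_\beta(z_1z_2)z_1^{m}\bigr)$; differentiating this identity with respect to $z_2$ (the variable \emph{not} used in the Rodrigues derivative) gives
\[
\partial_{z_2}\bigl(w_\beta(z_1z_2)f_{m,n}\bigr)=\partial_{z_1}^{n}\bigl(w_\beta'(z_1z_2)\,z_1^{m+1}\bigr).
\]
Now write the logarithmic-derivative hypothesis as $\sum_j b_j (z_1z_2)^{j} w_\beta'(z_1z_2)=\sum_j a_j (z_1z_2)^{j} w_\beta(z_1z_2)$, multiply through by $z_1^{m}$, and apply $\partial_{z_1}^{n}$: each $z_2^{j}$ pulls out, and the extra $z_1^{j}$ lives \emph{inside} the bracket as $z_1^{m+j}$, so the right-hand terms are read off directly from \eqref{eq:f-m-n-roderigues-1} as $w_\beta f_{m+j,n}$, and the left-hand terms from the displayed identity (with $m\to m+j-1$) as $\partial_{z_2}\bigl(w_\beta f_{m+j-1,n}\bigr)$. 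This is precisely what makes the $m$-shifts come out for free, with no appeal to nonexistent multiplication-by-$z_1$ identities. Formula \eqref{eq:f-m-n-differential-m-recurrence-2} is the analogous computation starting from \eqref{eq:f-m-n-roderigues-2}, and \eqref{eq:f-m-n-pde-from-m} is again the equality of the two.
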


\begin{proof}
From
\begin{gather*}
 \partial_{z_{2}}^{n}\left[w_{\alpha}(z_{1}z_{2})(z_{1}z_{2})^{n}\right] = \partial_{z_{2}}^{n}\left(w_{\beta}(z_{1}z_{2})(z_{1}z_{2})^{m}\right)
 = z_{1}^{m}\partial_{z_{2}}^{n}\left(w_{\beta}(z_{1}z_{2})z_{2}^{m}\right)\\
\hphantom{\partial_{z_{2}}^{n}\left[w_{\alpha}(z_{1}z_{2})(z_{1}z_{2})^{n}\right]}{}
 = z_{1}^{n}w_{\alpha}(z_{1}z_{2})\phi_{n}(z_{1}z_{2};\alpha)
 = z_{1}^{n}z_{2}^{m-n}w_{\beta}(z_{1}z_{2})f_{m,n}(z_{1},z_{2};\beta)
\end{gather*}
 and
\begin{gather*}
\partial_{z_{2}}\left[\left(\frac{z_{2}}{z_{1}}\right)^{m+1-n}w_{\beta}(z_{1}z_{2})f_{m+1,n}(z_{1},z_{2};\beta)\right] \\
 \qquad {}= \partial_{z_{2}}^{n+1}\left(w_{\beta}(z_{1}z_{2})z_{2}^{m+1}\right)
 = \left(\frac{z_{2}}{z_{1}}\right)^{m-n}w_{\beta}(z_{1}z_{2})f_{m+1,n+1}(z_{1},z_{2};\beta)
\end{gather*}
to get \eqref{eq:f-m-n-roderigues-2} and \eqref{eq:f-m-n-shift-n-down-2},
\eqref{eq:f-m-n-roderigues-1} and \eqref{eq:f-m-n-shift-n-down-1}
are proved similarly. Equation \eqref{eq:f-m-n-pde-from-n} is obtained
from~\eqref{eq:f-m-n-shift-n-down-2} and~\eqref{eq:f-m-n-shift-n-down-1}.

From
\begin{gather*}
w_{\beta}(z_{1}z_{2})f_{m,n}(z_{1},z_{2};\beta)=\partial_{z_{1}}^{n}\big(w_{\beta}(z_{1}z_{2})z_{1}^{m}\big)
\end{gather*}
 we get
\begin{gather*}
\partial_{z_{2}} (w_{\beta}(z_{1}z_{2})f_{m,n}(z_{1},z_{2};\beta) )=\partial_{z_{1}}^{n}\big(w_{\beta}^{(1)}(z_{1}z_{2})z_{1}^{m+1}\big).
\end{gather*}
On the other hand
\begin{gather*}
\sum_{j=0}^{t}b_{j}z_{1}^{j+m}z_{2}^{j}w{}_{\beta}^{(1)}(z_{1}z_{2})=\sum_{j=0}^{s}a_{j}z_{1}^{j+m}z_{2}^{j}w_{\beta}(z_{1}z_{2})
\end{gather*}
 implies
\begin{gather*}
\sum_{j=0}^{t}b_{j}z_{2}^{j}\partial_{z_{1}}^{n}\big(z_{1}^{j+m}w{}_{\beta}^{(1)}(z_{1}z_{2})\big)
=\sum_{j=0}^{s}a_{j}z_{2}^{j}\partial_{z_{1}}^{n}\big(z_{1}^{j+m}w_{\beta}(z_{1}z_{2})\big),
\end{gather*}
which gives \eqref{eq:f-m-n-differential-m-recurrence-1}. Equation~\eqref{eq:f-m-n-differential-m-recurrence-2} is proved similarly.
\eqref{eq:f-m-n-pde-from-m} is obtained from~\eqref{eq:f-m-n-differential-m-recurrence-1}
\linebreak and \eqref{eq:f-m-n-differential-m-recurrence-2}.
\end{proof}

The following theorem can be proved similarly.
\begin{Theorem}
\label{theorem:f-m-n-roderigues-q} Assume that
\begin{gather*}
w_{\alpha+\beta}(x)=w_{\alpha+\beta}(x|q)=x^{\alpha}w_{\beta}(x|q)
\end{gather*}
and
\begin{gather*}
\phi_{n}(x;\alpha)=\phi_{n}(x;\alpha|q)=\frac{1}{w_{\alpha}(x)}D_{q,x}^{n}\left(w_{\alpha}(x|q)x^{n}\right),\qquad n=0,1,\dots.
\end{gather*}
 Let $f_{m,n}(z_{1},z_{2};\beta|q)=f_{m,n}(z_{1},z_{2};\beta)$,
then for $m\ge n$ we have
\begin{gather*}
f_{m,n}(z_{1},z_{2};\beta|q)=\frac{\left(\frac{z_{1}}{z_{2}}\right)^{m-n}
\partial_{q,z_{2}}^{n}\left(w_{\beta}(z_{1}z_{2}|q)z_{2}^{m}\right)}{w_{\beta}(z_{1}z_{2}|q)},
\\
f_{m+1,n+1}(z_{1},z_{2};\beta|q)=\frac{\partial_{q,z_{2}}\left(\left(\frac{z_{2}}{z_{1}}\right)^{m+1-n}
w_{\beta}(z_{1}z_{2}|q)f_{m+1,n}(z_{1},z_{2};\beta|q)\right)}{\left(\frac{z_{2}}{z_{1}}\right)^{m-n}w_{\beta}(z_{1}z_{2}|q)},
\\
f_{m,n}(z_{1},z_{2};\beta|q)=\frac{\partial_{q,z_{1}}^{n}\left(w_{\beta}(z_{1}z_{2}|q)z_{1}^{m}\right)}{w_{\beta}(z_{1}z_{2}|q)},
\\
f_{m+1,n+1}(z_{1},z_{2};\beta|q)=\frac{\partial_{q,z_{1}}\left(w_{\beta}(z_{1}z_{2}|q)f_{m+1,n}(z_{1},z_{2};\beta|q)\right)}{w_{\beta}(z_{1}z_{2}|q)}
\end{gather*}
and
\begin{gather*}
 \partial_{q,z_{2}}\left(\left(\frac{z_{2}}{z_{1}}\right)^{m+1-n}w_{\beta}(z_{1}z_{2}|q)f_{m+1,n}(z_{1},z_{2};\beta|q)\right)\\
\qquad{}= \left(\frac{z_{2}}{z_{1}}\right)^{m-n}\partial_{q,z_{1}}\left(w_{\beta}(z_{1}z_{2}|q)f_{m+1,n}(z_{1},z_{2};\beta|q)\right).
\end{gather*}
Furthermore, if for some nonnegative integers $s$, $t$ and real numbers
$\{a_{j}\}_{j=0}^{s}$ and $\{b_{j}\}_{j=0}^{t}$ we have
\begin{gather*}
\frac{D_{q,x}w_{\beta}(x|q)}{w_{\beta}(x|q)}=\frac{\sum\limits_{j=0}^{s}a_{j}x^{j}}{\sum\limits_{j=0}^{t}b_{j}x^{j}},
\end{gather*}
 then for $m\ge n$
\begin{gather*}
\sum_{j=0}^{t}b_{j}\frac{\partial_{q,z_{1}}\big(z_{1}^{n-m-j+1}w_{\beta}(z_{1}z_{2}|q)f_{m+j-1,n}(z_{1},z_{2};\beta|q)\big)}
{z_{1}^{n-m-j}z_{2}^{1-j}w_{\beta}(z_{1}z_{2}|q)}
= \sum_{j=0}^{s}a_{j}z_{2}^{j}f_{m+j,n}(z_{1},z_{2};\beta|q),
\\ \sum_{j=0}^{t}b_{j}z_{2}^{j}\frac{\partial_{q,z_{2}}\big(w_{\beta}(z_{1}z_{2}|q)f_{m+j-1,n}(z_{1},z_{2};\beta|q)\big)}{w_{\beta}(z_{1}z_{2}|q)}
= \sum_{j=0}^{s}a_{j}z_{2}^{j}f_{m+j,n}(z_{1},z_{2};\beta|q)
\end{gather*}
and
\begin{gather*}
 \sum_{j=0}^{t}b_{j}\frac{\partial_{q,z_{1}}\big(z_{1}^{n-m-j+1}w_{\beta}(z_{1}z_{2}|q)f_{m+j-1,n}(z_{1},z_{2};\beta|q)\big)}
 {z_{1}^{n-m-j}z_{2}^{1-j}w_{\beta}(z_{1}z_{2}|q)} \\
 \qquad{}
= \sum_{j=0}^{t}b_{j}z_{2}^{j}\frac{\partial_{q,z_{2}}\big(w_{\beta}(z_{1}z_{2}|q)f_{m+j-1,n}(z_{1},z_{2};\beta|q)\big)}{w_{\beta}(z_{1}z_{2}|q)}.
\end{gather*}
\end{Theorem}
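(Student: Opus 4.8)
The plan is to run the proof of Theorem~\ref{theorem:f-m-n-roderigues} essentially verbatim, replacing each ordinary partial derivative $\partial_{z_i}$ by the Jackson $q$-derivative $D_{q,z_i}$. What makes this legitimate is that the proof of Theorem~\ref{theorem:f-m-n-roderigues} uses only three elementary properties of $\partial$, and each has an exact $q$-analogue, immediate from $(D_{q,z}f)(z)=(f(z)-f(qz))/((1-q)z)$: \textbf{(i)} for a one-variable function $h$, $D_{q,z_2}[h(z_1z_2)]=z_1(D_{q,x}h)(z_1z_2)$, hence $D_{q,z_2}^{n}[h(z_1z_2)]=z_1^{n}(D_{q,x}^{n}h)(z_1z_2)$, and symmetrically in $z_1$; \textbf{(ii)} $D_{q,z_1}$ and $D_{q,z_2}$ commute as operators in $(z_1,z_2)$; \textbf{(iii)} $D_{q,z_i}$ is linear and annihilates functions independent of $z_i$, so $D_{q,z_2}[z_1^{m}w_\beta(z_1z_2|q)]=z_1^{m+1}(D_{q,x}w_\beta)(z_1z_2|q)$. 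I emphasize that the ordinary Leibniz rule is never used in the proof of Theorem~\ref{theorem:f-m-n-roderigues}, so the twisted $q$-Leibniz rule is not an issue here.

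Granting \textbf{(i)}--\textbf{(iii)}, the Rodrigues and lowering formulas come out as before. From the weight relation $w_{\alpha+\beta}(x|q)=x^{\alpha}w_\beta(x|q)$ (with $\alpha=m-n$) one gets $w_\beta(x|q)x^{m}=w_{m-n+\beta}(x|q)x^{n}$, so the $q$-Rodrigues hypothesis gives $D_{q,x}^{n}[w_\beta(x|q)x^{m}]=x^{m-n}w_\beta(x|q)\phi_n(x;m-n+\beta)$. Using \textbf{(i)} with $h(x)=w_\beta(x|q)x^{m}$ and pulling the $z_2$-constant factor $z_1^{m}$ out,
\begin{gather*}
z_1^{m}D_{q,z_2}^{n}\big(w_\beta(z_1z_2|q)z_2^{m}\big)=D_{q,z_2}^{n}\big(w_\beta(z_1z_2|q)(z_1z_2)^{m}\big)=z_1^{m}z_2^{m-n}w_\beta(z_1z_2|q)\phi_n(z_1z_2;m-n+\beta);
\end{gather*}
solving for $\phi_n(z_1z_2;m-n+\beta)$ and inserting $f_{m,n}(z_1,z_2;\beta|q)=z_1^{m-n}\phi_n(z_1z_2;m-n+\beta)$ gives the first displayed identity of the theorem, and the $z_1$-Rodrigues formula follows the same way from the $z_1$-version of \textbf{(i)}. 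The two lowering relations are then obtained exactly as classically, by feeding these Rodrigues formulas (with $m$, $n$ raised by $1$) into $D_{q,z_2}$, respectively $D_{q,z_1}$, of the products displayed in the statement and cancelling the powers of $z_1$, $z_2$ and $w_\beta$; equating the two resulting expressions for $f_{m+1,n+1}$ yields the mixed $q$-difference relation between the $z_1$- and $z_2$-operators.

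For the last block I would argue as in the proof of Theorem~\ref{theorem:f-m-n-roderigues}. By \textbf{(ii)} and \textbf{(iii)}, the $z_1$-Rodrigues formula $w_\beta(z_1z_2|q)f_{m,n}=D_{q,z_1}^{n}(w_\beta(z_1z_2|q)z_1^{m})$ gives $D_{q,z_2}(w_\beta(z_1z_2|q)f_{m,n})=D_{q,z_1}^{n}(z_1^{m+1}(D_{q,x}w_\beta)(z_1z_2|q))$. The hypothesis on the logarithmic $q$-derivative of $w_\beta$ reads $\sum_{j}b_j x^{j}(D_{q,x}w_\beta)(x|q)=\sum_{j}a_j x^{j}w_\beta(x|q)$; substituting $x=z_1z_2$ and multiplying by $z_1^{m}$,
\begin{gather*}
\sum_{j=0}^{t}b_j z_1^{m+j}z_2^{j}(D_{q,x}w_\beta)(z_1z_2|q)=\sum_{j=0}^{s}a_j z_1^{m+j}z_2^{j}w_\beta(z_1z_2|q).
\end{gather*}
Applying $D_{q,z_1}^{n}$, extracting the $z_1$-constants $z_2^{j}$, reading the right-hand side through the $z_1$-Rodrigues formula with $m$ replaced by $m+j$, and rewriting $D_{q,z_1}^{n}(z_1^{m+j}(D_{q,x}w_\beta)(z_1z_2|q))=D_{q,z_2}(w_\beta(z_1z_2|q)f_{m+j-1,n})$ via the preceding display with $m$ replaced by $m+j-1$, one obtains the two differential--difference $m$-recurrences; subtracting them produces the last $q$-difference equation. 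The only step that needs attention --- and the one I expect to be the main (though modest) obstacle --- is tracking the $z_1$- and $z_2$-powers that generate the prefactors $(z_2/z_1)^{m-n}$, $z_1^{n-m-j}z_2^{1-j}$ and the like; this is mechanical once one remembers that $D_{q,z}$ sends $z^{k}$ to $[k]_q z^{k-1}$ rather than to $kz^{k-1}$, a change that leaves all the cancellations above intact.
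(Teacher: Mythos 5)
Your proposal is correct and matches the paper's intent exactly: the paper gives no separate proof, stating only that the theorem "can be proved similarly" to Theorem~\ref{theorem:f-m-n-roderigues}, and your properties (i)--(iii) are precisely the facts that make that transfer legitimate. Your observation that the Leibniz rule is never invoked (so the twisted $q$-Leibniz rule is a non-issue) correctly identifies the only point where the analogy could have broken down.
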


 We now discuss monotonicity of zeros of $\{f_{m,n}(z, \bar z)\}$.
The following version of Markov's theorem is Theorem~7.1.1 in~\cite{Ism}.
\begin{Theorem}\label{theorem7.1.1}
Let $\left\{p_n(x;\tau)\right\}$ be orthogonal with respect to $d\alpha(x;\tau)$,
\begin{gather*}
d\mu(x;\tau)=\rho(x;\tau) d\beta(x),
\end{gather*}
on an interval $I=(a,b)$ and assume that $\rho(x;\tau)$ is positive and has a continuous first derivative with respect to~$\tau$ for~$x\in I$, $\tau\in T=(\tau_1,\tau_2)$. Furthermore, assume that
\begin{gather*}
\int_a^b x^j \rho_\tau(x;\tau) d\beta(x),\qquad j=0,1,\dots,2n-1,
\end{gather*}
converge uniformly for $\tau$ in every compact subinterval of $T$. Then the zeros of the orthogonal polynomials $p_n(x;\tau)$ are increasing
$($decreasing$)$ functions of $\tau$, $\tau\in T$, if $\partial\{\ln\rho(x;\tau)\}/\partial\tau$ is an increasing
$($decreasing$)$ function of~$x$, $x\in I$.
\end{Theorem}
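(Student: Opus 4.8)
The plan is to differentiate the zeros of $p_n(x;\tau)$ in $\tau$ and show that the derivative has a definite sign governed by the monotonicity of $\partial_\tau\{\ln\rho(x;\tau)\}$ in $x$. Normalize $p_n$ to be monic and let $x_{n,1}(\tau)<\cdots<x_{n,n}(\tau)$ be its zeros in $I$; they are simple and lie in $I$ by the standard theory of orthogonal polynomials. The coefficients of $p_n(x;\tau)$ are rational functions of the moments $\int_a^b x^j\rho(x;\tau)\,d\beta(x)$, and the hypothesis that $\int_a^b x^j\rho_\tau(x;\tau)\,d\beta(x)$ converge uniformly on compact subsets of $T$ lets us differentiate the moments, hence these coefficients, with respect to $\tau$. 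Since $p_n'(x_{n,k};\tau)\neq0$ by simplicity, the implicit function theorem gives $x_{n,k}\in C^1(T)$ with
\[
\frac{d x_{n,k}}{d\tau}=-\frac{(\partial_\tau p_n)(x_{n,k};\tau)}{p_n'(x_{n,k};\tau)},
\]
where $\partial_\tau p_n$ is a polynomial in $x$ of degree at most $n-1$ (monic normalization) and $p_n'$ is the derivative in $x$.

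Next I would evaluate $(\partial_\tau p_n)(x_{n,k};\tau)$. Differentiating the orthogonality relations $\int_a^b p_n(x;\tau)x^j\rho(x;\tau)\,d\beta(x)=0$ for $j=0,\dots,n-1$ under the integral sign (again justified by the uniform convergence) yields, by linearity, $\int_a^b(\partial_\tau p_n)(x;\tau)R(x)\rho(x;\tau)\,d\beta(x)=-\int_a^b p_n(x;\tau)R(x)\rho_\tau(x;\tau)\,d\beta(x)$ for every polynomial $R$ of degree $\le n-1$. Taking $R=\ell_k$, the $k$-th Lagrange fundamental polynomial $\ell_k(x)=p_n(x;\tau)/[p_n'(x_{n,k};\tau)(x-x_{n,k})]$, the left-hand integrand has degree $\le 2n-2$, so the $n$-point Gauss quadrature at the nodes $x_{n,j}$ with Christoffel numbers $\lambda_{n,j}(\tau)>0$ is exact and the left side equals $\lambda_{n,k}(\tau)(\partial_\tau p_n)(x_{n,k};\tau)$; on the right, $p_n\ell_k=p_n^2/[p_n'(x_{n,k})(x-x_{n,k})]$. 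Solving and inserting into the previous display gives
\[
\frac{d x_{n,k}}{d\tau}=\frac{1}{\lambda_{n,k}(\tau)\,p_n'(x_{n,k};\tau)^2}\int_a^b\frac{p_n(x;\tau)^2}{x-x_{n,k}}\,\rho_\tau(x;\tau)\,d\beta(x),
\]
so the sign of $dx_{n,k}/d\tau$ is exactly the sign of this integral.

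Finally I would use that $p_n(x)/(x-x_{n,k})$ is a polynomial of degree $n-1$, hence orthogonal to $p_n$, so $\int_a^b \frac{p_n(x;\tau)^2}{x-x_{n,k}}\rho(x;\tau)\,d\beta(x)=0$. Writing $\rho_\tau=\big(\partial_\tau\ln\rho\big)\,\rho$ and subtracting $\big(\partial_\tau\ln\rho\big)(x_{n,k};\tau)$ times this vanishing integral, the sign of $dx_{n,k}/d\tau$ equals the sign of
\[
\int_a^b\frac{p_n(x;\tau)^2}{x-x_{n,k}}\Big[\big(\partial_\tau\ln\rho\big)(x;\tau)-\big(\partial_\tau\ln\rho\big)(x_{n,k};\tau)\Big]\rho(x;\tau)\,d\beta(x).
\]
The factor $p_n(x)^2/(x-x_{n,k})$ has the sign of $x-x_{n,k}$; if $x\mapsto\partial_\tau\ln\rho(x;\tau)$ is increasing, then the bracket too has the sign of $x-x_{n,k}$, so the integrand is $\ge0$ throughout $I$ (and $>0$ on a set of positive $\rho\,d\beta$-measure unless $\partial_\tau\ln\rho$ is constant in $x$), the integral is positive, and every zero increases with $\tau$; the decreasing case is the mirror image. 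The only genuinely delicate points are the legitimacy of differentiating under the integral sign and the resulting $C^1$ dependence of the (simple) zeros on $\tau$ — precisely what the uniform convergence assumption together with simplicity of the zeros are there to guarantee; the remainder is the quadrature bookkeeping above.
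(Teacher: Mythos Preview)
Your argument is the classical proof of Markov's monotonicity theorem and it is correct: the implicit-function step, the differentiation of the orthogonality relations, the Gauss-quadrature evaluation via the Lagrange basis, and the sign analysis after subtracting $(\partial_\tau\ln\rho)(x_{n,k};\tau)$ times the vanishing integral are all sound. Note, however, that the paper does not supply its own proof of this statement; it merely quotes it as Theorem~7.1.1 of \cite{Ism} and uses it as a tool, so there is no in-paper proof to compare against --- your write-up is precisely the standard proof one finds in that reference (and in Szeg\H{o}).
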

The standard version of Markov's theorem is when $\beta(x) =x$, see~\cite{Sze}.

By applying Theorem~\ref{theorem7.1.1} to the polynomials $\{\phi_n(r; \alpha)\}$ def\/ined in~\eqref{eqphin} it follows that the zeros of $\phi_n(r;\alpha)$ increase with $\alpha, \alpha \ge 0$. Now f\/ix $n$ and assume that $m \ge n$. In view of the def\/inition of the polynomials $\{f_{m,n}(z, \bar z)\}$, $m \ge n$, in~\eqref{eqdeffmn} we see that if~$r_0$ is a~zero of $\phi_n(r; \alpha)$ then $f_{m,n}(z, \bar z) =0$
for all $z$ on the circle $|z| = \sqrt{r_0}$. Since $\alpha = m-n$ the following theorem holds.

\begin{Theorem}
For fixed $n$ the radii of the circles forming the zero sets of $f_{m,n}(z, \bar z)$ increase with $m$ for all $m \ge n$.
\end{Theorem}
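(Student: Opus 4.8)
The plan is to reduce the assertion to the monotonicity in $\alpha$ of the zeros of the one-variable polynomials $\phi_n(r;\alpha)$, a fact already recorded just above via Theorem~\ref{theorem7.1.1}. Fix $n$ and $\beta$, and let $m\ge n$. By the defining relation~\eqref{eqdeffmn} we have $f_{m,n}(z_1,z_2;\beta)=z_1^{m-n}\phi_n(z_1z_2;m-n+\beta)$, and since throughout we take $z_2=\bar z_1$, the argument $z_1z_2$ equals $|z|^2=r^2$. Consequently $f_{m,n}(z,\bar z;\beta)=0$ exactly when $z=0$ (the trivial zero present when $m>n$) or when $r^2$ is one of the $n$ positive simple zeros of $\phi_n(\,\cdot\,;\alpha)$ with $\alpha=m-n+\beta$; writing these as $r_1(\alpha)<\cdots<r_n(\alpha)$, the zero set of $f_{m,n}(\,\cdot\,,\,\cdot\,;\beta)$ is the union of the circles $|z|=\sqrt{r_k(m-n+\beta)}$, $1\le k\le n$.

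Now, with $n$ and $\beta$ held fixed, increasing $m$ strictly increases the parameter $\alpha=m-n+\beta$, which ranges over $[\beta,\infty)\subseteq[0,\infty)$. By the application of Markov's theorem (Theorem~\ref{theorem7.1.1}) already noted, each zero $r_k(\alpha)$ of $\phi_n(\,\cdot\,;\alpha)$ is an increasing function of $\alpha$ for $\alpha\ge 0$; hence each radius $\sqrt{r_k(m-n+\beta)}$ is an increasing function of $m$ for $m\ge n$. This is precisely the claim.

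In short the statement is a corollary of the preceding material, so there is no real obstacle to overcome here: the substantive input is the verification that Theorem~\ref{theorem7.1.1} applies to $\{\phi_n(r;\alpha)\}$ — one has $\partial_\alpha\ln(r^\alpha)=\ln r$, increasing on $(0,\infty)$, together with uniform convergence of the relevant moment integrals on compact $\alpha$-intervals — and that verification is exactly what was invoked in the paragraph preceding the theorem. All that remains is the bookkeeping translating "zero of $\phi_n$" into "circle in the zero set of $f_{m,n}$" via $r\mapsto\sqrt{r}$, which the display above makes explicit.
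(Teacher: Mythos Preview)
Your proposal is correct and follows essentially the same route as the paper: the paper likewise reduces the claim to the observation that the zeros of $\phi_n(r;\alpha)$ increase with $\alpha$ via Markov's theorem (Theorem~\ref{theorem7.1.1}), and then reads off the conclusion from the definition~\eqref{eqdeffmn} together with $\alpha=m-n$. The only (harmless) extra you add is the explicit bookkeeping with the parameter $\beta$ and the remark that $\partial_\alpha\ln(r^\alpha)=\ln r$ is increasing; note, however, that the inclusion $[\beta,\infty)\subseteq[0,\infty)$ you invoke presumes $\beta\ge 0$, which matches the paper's standing hypothesis $\alpha\ge 0$ in Section~\ref{section2} but is worth stating explicitly if you retain the $\beta$-dependence.
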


 \section[The polynomials $\big\{Z^{(\beta)}_{m,n}(z_1,z_2)\big\}$]{The polynomials $\boldsymbol{\big\{Z^{(\beta)}_{m,n}(z_1,z_2)\big\}}$}\label{section3}

 Motivated by the class of general 1$D$ systems in Section~\ref{section2}
 we def\/ine polynomials
 $\{Z^{(\beta)}_{m, n}(z_1,z_2)\}$ by
\begin{gather}
\label{defZmn}
Z^{(\beta)}_{m, n}(z_1, z_2)
 =
 \frac{1}{n!}
 \sum_{k=0}^{n} \binom{n}{k} \frac{(\beta+1)_m}{(\beta+1)_{m-k}} (-1)^{n-k} z_1^{m-k}
 z_2^{n-k}
\end{gather}
for $m \ge n$. When $ m< n$ the polynomials are def\/ined by
\begin{gather*}
Z^{(\beta)}_{m, n}(z_1, z_2) = Z^{(\beta)}_{n,m}(z_2, z_1)
\end{gather*}
Here $\beta >-1$.

These polynomials arise through the choice $\phi_n(x; \alpha) = L_n^{(\alpha+\beta)}(x)$, where $L_n^{(a)}(x)$ is a Laguerre polynomial \cite{Ism, Sze}
 \begin{gather*}
 L_n^{(a)}(x) = \frac{(a+1)_n}{n!} \sum_{k=0}^n \binom{n}{k} \frac{(-x)^k}{(a+1)_k}=\frac{(a+1)_n}{n!} \sum_{k=0}^n \binom{n}{k}\frac{(-x)^{n-k}}{(a+1)_{n-k}}.
 \end{gather*}
 Indeed, for $m\ge n$
 \begin{gather*}
 Z^{(\beta)}_{m, n}(z_1, z_2) = z_1^{m-n} L_n^{(\beta+ m-n)}(z_1z_2)
 \end{gather*}
 and
 \begin{gather*}
 z_1Z^{(\beta+1)}_{m, n}(z_1, z_2) = Z^{(\beta)}_{m+1, n}(z_1, z_2).
 \end{gather*}
It is clear that when $\beta =0$ we see that
\begin{gather*}
H_{m, n}(z_1, z_2) = (-1)^n(m\wedge n)! Z^{(0)}_{m, n}(z_1, z_2).
\end{gather*}

Therefore in the notation of Section~\ref{section2}, we have
\begin{gather}
\label{eqzetabdcjn}
\zeta_n(\alpha) = \frac{\Gamma(\alpha+\beta+n+1)}{n!},
\qquad c_j(n,\alpha) = \frac{(-1)^{n-j} (\alpha+\beta+1)_n}{(n-j)! j! (\alpha+\beta+1)_{n-j}}.
\end{gather}

\begin{Theorem}\label{ortZmn}
For $m,n,s,t=0,1,\dots$ and $\beta>-1$, we have the orthogonality relation
\begin{gather*}
\int_{\mathbb{R}^2} Z^{(\beta)}_{m, n}(z, \bar z) \overline{Z^{(\beta)}_{s,t}(z, \bar z)} (z\bar z)^\beta e^{-z \bar z} r dr d\theta = \pi \frac{\Gamma(\beta+m\vee n+1)}{(m\wedge n)!}
\delta_{m,s}\delta_{n,t} .
\end{gather*}
\end{Theorem}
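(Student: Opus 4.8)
The plan is to deduce this from the general orthogonality Theorem~\ref{orthfmn}, specialized to the Laguerre case: take $\phi_{n}(x;\alpha)=L_{n}^{(\alpha)}(x)$, $d\mu(x)=e^{-x}dx$ on $(0,\infty)$, $d\nu(\theta)=\frac{d\theta}{2\pi}$ and $N=\infty$, for which \eqref{eqdeffmn} reads $f_{m,n}(z_{1},z_{2};\beta)=z_{1}^{m-n}L_{n}^{(m-n+\beta)}(z_{1}z_{2})=Z^{(\beta)}_{m,n}(z_{1},z_{2})$; one then only has to convert $d\mu(r^{2};\beta)$ and $d\nu(\theta)$ into the area measure. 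Equivalently — and this is the version I would write out, since it keeps the normalizing constants transparent — one verifies it directly from the classical Laguerre orthogonality relation. As in the proof of Theorem~\ref{orthfmn} there is no loss of generality in assuming $m\ge n$, the case $m<n$ following from~\eqref{eq:symmetry-condition} after taking complex conjugates. Writing $z=re^{i\theta}$, so that $z\bar z=r^{2}$, the identity $Z^{(\beta)}_{m,n}(z,\bar z)=z^{m-n}L_{n}^{(\beta+m-n)}(z\bar z)$ recorded above gives $Z^{(\beta)}_{m,n}(z,\bar z)=r^{m-n}e^{i(m-n)\theta}L_{n}^{(\beta+m-n)}(r^{2})$.

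First I would carry out the $\theta$-integration. If $s\ge t$ the integrand carries the angular factor $e^{i[(m-n)-(s-t)]\theta}$, so $\int_{0}^{2\pi}$ of it vanishes unless $m-n=s-t$ and otherwise contributes $2\pi$; if $s<t$ one uses~\eqref{eq:symmetry-condition} to write $\overline{Z^{(\beta)}_{s,t}(z,\bar z)}=Z^{(\beta)}_{t,s}(z,\bar z)$, whereupon the angular factor becomes $e^{i[(m-n)+(t-s)]\theta}$, which cannot integrate to a nonzero value since $m-n\ge0>s-t$, so the whole integral vanishes, consistently with $\delta_{m,s}\delta_{n,t}=0$. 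Hence assume $m-n=s-t=:\ell\ge0$. Substituting $u=r^{2}$, so that $r\,dr=\frac{1}{2}\,du$, $r^{2\beta}=u^{\beta}$ and $r^{2\ell}=u^{\ell}$, reduces what remains to
\begin{gather*}
2\pi\int_{0}^{\infty}r^{2\ell}L_{n}^{(\ell+\beta)}\big(r^{2}\big)L_{t}^{(\ell+\beta)}\big(r^{2}\big)r^{2\beta}e^{-r^{2}}\,r\,dr\\
\qquad{}=\pi\int_{0}^{\infty}L_{n}^{(\ell+\beta)}(u)L_{t}^{(\ell+\beta)}(u)\,u^{\ell+\beta}e^{-u}\,du .
\end{gather*}
Since $\ell+\beta>-1$, the classical Laguerre orthogonality relation evaluates this to $\pi\,\frac{\Gamma(n+\ell+\beta+1)}{n!}\,\delta_{n,t}$.

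Finally I would assemble the constant. When the last quantity is nonzero we have $n=t$ and hence $m=s$ (from $m-n=s-t$); since $m\ge n$ one has $m\vee n=m=n+\ell$ and $m\wedge n=n$, so $\frac{\Gamma(n+\ell+\beta+1)}{n!}=\frac{\Gamma(\beta+m\vee n+1)}{(m\wedge n)!}$, which is exactly the asserted value $\pi\,\frac{\Gamma(\beta+m\vee n+1)}{(m\wedge n)!}\,\delta_{m,s}\delta_{n,t}$. There is no genuine obstacle: the argument is just Fubini together with Laguerre orthogonality. The points that need care are the reduction to $m\ge n$ and the bookkeeping that makes the mixed $s$--$t$ cases collapse into the Kronecker deltas, exactly as in the proof of Theorem~\ref{orthfmn}, and keeping precise track of the Jacobian factor $\frac{1}{2}$ from $u=r^{2}$ together with the factor $2\pi$ from the angular integral, so that the overall constant comes out to be precisely $\pi$.
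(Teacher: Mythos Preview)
Your argument is correct and is exactly the route the paper itself acknowledges first: it remarks that the result follows from Theorem~\ref{orthfmn} in the Laguerre case~\eqref{eqzetabdcjn}, which is precisely the reduction to the classical Laguerre orthogonality that you carry out in detail. The paper, however, chooses to write out a different ``alternate'' proof: rather than pairing $Z^{(\beta)}_{m,n}$ against $Z^{(\beta)}_{s,t}$ and invoking Laguerre orthogonality, it tests $Z^{(\beta)}_{m,n}(z,\bar z)$ against monomials $\overline{z^{a}\bar z^{b}}$ with $a\le m$, $b\le n$, performs the $\theta$-integration to force $m-n=a-b$, evaluates the remaining radial integral termwise in gamma functions, and then recognizes the resulting sum as a terminating ${}_2F_1$ at $1$ which is summed by Chu--Vandermonde to produce the factor $(m-n-a)_n$; this vanishes for $b<n$ and yields the stated constant for $(a,b)=(m,n)$. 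Your approach is shorter and more conceptual, resting on a single known orthogonality relation; the paper's alternate proof is more self-contained, deriving the Laguerre orthogonality implicitly rather than quoting it.
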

This follows from Theorem \ref{orthfmn} in the case~\eqref{eqzetabdcjn} but for completeness we give a direct proof.

\begin{proof}[Alternate proof]
When $m \ge n$ we consider the integral
\begin{gather*}
I(m,n,a,b) := \int_{\mathbb{R}^2} Z^{(\beta)}_{m, n}(z, \bar z) \overline{(z^a \bar z^b)} r^{2\beta+1} e^{-r^2} dr d \theta,
\end{gather*}
for $a \le m, b \le n$.
Therefore
\begin{gather*}
I(m,n,a,b) := 2\pi \delta_{b+m, a+n} \frac{(-1)^n}{n!} \sum_{k=0}^{n} \binom{n}{k} \frac{(\beta+1)_m}{(\beta+1)_{m-k}} (-1)^k
\int_0^\infty r^{a+ b+m+n -2k+ 2\beta+1} e^{-r^2} dr \\
\hphantom{I(m,n,a,b)}{}
= \pi \delta_{b+m, a+n} \frac{(\beta+1)_m}{n!} \sum_{k=0}^{ n} \binom{n}{k} \frac{(-1)^{n-k}}{(\beta+1)_{m-k}} \Gamma(1+ \beta-k+ (a+b+m+n)/2) \\
\hphantom{I(m,n,a,b)}{}
= \pi \delta_{b+m, a+n} \frac{(\beta+1)_m}{n!} \sum_{k=0}^ n \binom{n}{k} \frac{(-1)^{n-k}}{(\beta+1)_{m-k}} \Gamma(1+ \beta-k+ a+n) \\
\hphantom{I(m,n,a,b)}{}
= \pi \delta_{b+m, a+n} \frac{ \Gamma(1+ \beta + a)}{(\beta+1)_{m-n}} \frac{(\beta+1)_m}{n!}
\, {}_2F_1(-n, 1+a +\beta; \beta+1+m-n;1) \\
\hphantom{I(m,n,a,b)}{}
= \pi \delta_{b+m, a+n} \frac{ \Gamma(1+ \beta + a)}{(\beta+1)_{m-n}}
 \frac{(\beta+1)_m}{n!} \frac{(m-n-a)_n}{(\beta+1+m-n)_n}.
\end{gather*}
Since $m-n-a = -b$ we that the integral vanishes for $b <n$. If $b=n$ then $a =m$ and we conclude that
\begin{gather*}
 \int_{\mathbb{R}^2} Z^{(\beta)}_{m, n}(z ,\bar z) \overline{(z^a \bar z^b)} r^{2\beta+1} e^{-r^2} dr d \theta
 = \pi \Gamma(1+\beta+m) \delta_{a, m} \delta_{b, n},
\end{gather*}
and the theorem now follows.
\end{proof}

In view of \eqref{eqzetabdcjn} the recurrence relations \eqref{eqrecrel2}, \eqref{eq3trrG2D} and \eqref{eqrecrel2-1-2} become
\begin{gather}
\label{eqrrZmn1}
z_1Z^{(\beta)}_{m, n}(z_1, z_2) = Z^{(\beta)}_{m+1, n}(z_1, z_2) - Z^{(\beta)}_{m, n-1}(z_1, z_2), \\
\label{eqrrZmn2}
z_2 Z^{(\beta)}_{m+1, n}(z_1, z_2) = -(n+1) Z^{(\beta)}_{m+1, n+1}(z_1, z_2) +
(\beta+m+1)Z^{(\beta)}_{m, n}(z_1, z_2),
\end{gather}
and
\begin{gather*}
 (\beta+m+n+1-z_{1}z_{2} )Z_{m,n}^{(\beta)}(z_{1},z_{2})\\
\qquad{} = (n+1 )Z_{m+1,n+1}^{(\beta)}(z_{1},z_{2})+ (m+\beta )Z_{m-1,n-1}^{(\beta)}(z_{1},z_{2})
\end{gather*}
respectively, where $m\ge n$.
We now discuss dif\/ferential recurrence relations.
 \begin{Theorem}
 For $m\ge n$, the polynomials $\{Z^{(\beta)}_{m, n}(z_1, z_2) \}$ satisfy the differential recurrence relations
 \begin{gather}
 {\delta}_{ z_2} Z^{(\beta)}_{m, n}(z_1, z_2) = -z_2 Z^{(\beta)}_{m, n-1}(z_1, z_2),
 \label{eqdrr1}
 \\
 \delta _{z_{2}}Z_{m,n}^{(\beta)}(z_{1},z_{2}) = nZ_{m,n}^{(\beta)}(z_{1},z_{2})-(\beta+m)Z_{m-1,n-1}^{(\beta)}(z_{1},z_{2}),
 \label{eqdrr1a}
 \\
 {\delta}_ {z_1} Z^{(\beta)}_{m, n}(z_1, z_2) = (m-n) Z^{(\beta)}_{m, n}(z_1, z_2) - {z_2} Z^{(\beta)}_{m, n-1}(z_1, z_2),
 \label{eqdrr2}
 \\
\label{eqdrr2a}
{\delta}_{z_{1}}Z_{m,n}^{(\beta)}(z_{1},z_{2}) = m Z_{m,n}^{(\beta)}(z_{1},z_{2})-(m+\beta)Z_{m-1,n-1}^{(\beta)}(z_{1},z_{2}),
\\
\label{symmetric-first-order-pde-a}
({\delta}_{z_1}-{\delta}_{z_2})Z_{m,n}^{(\beta)}(z_1,z_2)=(m-n)Z_{m,n}^{(\beta)}(z_1,z_2),
\\
\label{symmetric-first-order-pde-b}
(n{\delta}_{z_1}-m{\delta}_{z_2})Z_{m,n}^{(\beta)}(z_1,z_2)=(m-n)(m+\beta)Z_{m-1,n-1}^{(\beta)}(z_1,z_2),
 \end{gather}
where ${\delta}_{z_1}=z_1{\partial}_{z_1}$, ${\delta}_{z_2}=z_2{\partial}_{z_2}$.
 Moreover they have the operational representation
 \begin{gather}
 Z^{(\beta)}_{m, n}(z_1, z_2) = \frac{(-1)^n}{ n!} z_1^{-\beta} \exp(-\partial_{z_1} \partial_{z_2}) z_1^{\beta+m}z_2^n
 \label{eqexpdd}
 \end{gather}
 and the Rodrigues type representation
 \begin{gather}
 Z^{(\beta)}_{m, n}(z_1, z_2) = \frac{1}{n!} z_1^{-\beta}e^{z_1z_2}
 \partial_{z_1}^n \big(z_1^{m+\beta}e^{-z_1z_2}\big) \nonumber\\
 \hphantom{Z^{(\beta)}_{m, n}(z_1, z_2)}{} =
 \frac{(-1)^{m}}{n!} (z_1z_2)^{-\beta}e^{z_1z_2} \partial_{z_1}^n
 \big((z_1z_2)^\beta \partial_{z_2}^m \big(e^{-z_1z_2}\big)\big).
 \label{eqRoZmn}
 \end{gather}
 \end{Theorem}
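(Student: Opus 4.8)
The plan is to exploit the single-variable identity $Z^{(\beta)}_{m,n}(z_1,z_2)=z_1^{m-n}L_n^{(\beta+m-n)}(z_1z_2)$ for $m\ge n$, which reduces every claimed formula to a known Laguerre identity or to a short direct computation. I would prove the six differential recurrence relations first, then the operational representation \eqref{eqexpdd}, and finally the Rodrigues formulas \eqref{eqRoZmn}, since the latter can be read off from the general Theorem~\ref{theorem:f-m-n-roderigues} once one checks that the Laguerre weight $w_\alpha(x)=x^\alpha e^{-x}$ satisfies $w_{\alpha+\beta}(x)=x^\alpha w_\beta(x)$ and that $\phi_n(x;\alpha)=L_n^{(\alpha+\beta)}(x)$ has the Rodrigues form $\frac{1}{w_\alpha(x)}\partial_x^n(w_\alpha(x)x^n)$ up to a constant; matching the normalization against \eqref{defZmn} gives the $1/n!$ factor.

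For the $\delta_{z_2}$ relations, I would compute directly from \eqref{defZmn}: since $Z^{(\beta)}_{m,n}$ is a sum of monomials $z_1^{m-k}z_2^{n-k}$, applying $\delta_{z_2}=z_2\partial_{z_2}$ multiplies the $k$-th term by $(n-k)$, and one recognizes the resulting sum as $-z_2 Z^{(\beta)}_{m,n-1}$ after shifting the index, giving \eqref{eqdrr1}; combining \eqref{eqdrr1} with the three-term recurrence \eqref{eqrrZmn2} (solved for $z_2 Z^{(\beta)}_{m,n-1}$) yields \eqref{eqdrr1a}. For \eqref{eqdrr2}, I would use $\delta_{z_1}Z^{(\beta)}_{m,n}=\delta_{z_1}\big(z_1^{m-n}L_n^{(\beta+m-n)}(z_1z_2)\big)=(m-n)Z^{(\beta)}_{m,n}+z_1^{m-n}\cdot z_1z_2\,(L_n^{(\beta+m-n)})'(z_1z_2)$ and then invoke the classical Laguerre derivative formula $x\frac{d}{dx}L_n^{(a)}(x)=nL_n^{(a)}(x)-(n+a)L_{n-1}^{(a)}(x)$ together with the contiguous relation relating $L_{n-1}^{(a)}$ to $L_n^{(a-1)}$; alternatively \eqref{eqdrr2} is immediate from \eqref{eqdrr1} and the Euler-type identity $\delta_{z_1}+\delta_{z_2}$ acting on a monomial, combined with \eqref{symmetric-first-order-pde-a}. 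Indeed \eqref{symmetric-first-order-pde-a} is just the general identity $(\delta_{z_1}-\delta_{z_2})f_{m,n}=(m-n)f_{m,n}$ already recorded in Section~\ref{section2}, so it requires no new work; then \eqref{eqdrr2a} follows by subtracting \eqref{eqdrr1a} from \eqref{symmetric-first-order-pde-a}, and \eqref{symmetric-first-order-pde-b} follows by taking the combination $n\cdot\eqref{eqdrr2a}-m\cdot\eqref{eqdrr1a}$ and simplifying the right-hand side.

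For the operational representation \eqref{eqexpdd}, the key observation is that $\exp(-\partial_{z_1}\partial_{z_2})$ acts on $z_1^{\beta+m}z_2^n$ by expanding the exponential: the $k$-th term contributes $\frac{(-1)^k}{k!}\partial_{z_1}^k\partial_{z_2}^k\big(z_1^{\beta+m}z_2^n\big)=\frac{(-1)^k}{k!}\frac{\Gamma(\beta+m+1)}{\Gamma(\beta+m+1-k)}\frac{n!}{(n-k)!}z_1^{\beta+m-k}z_2^{n-k}$, and the series truncates at $k=n$; multiplying by $\frac{(-1)^n}{n!}z_1^{-\beta}$ and comparing with \eqref{defZmn} after rewriting $\frac{\Gamma(\beta+m+1)}{\Gamma(\beta+m+1-k)}=\frac{(\beta+1)_m}{(\beta+1)_{m-k}}$ gives exact agreement. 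The Rodrigues formulas \eqref{eqRoZmn} then follow either from Theorem~\ref{theorem:f-m-n-roderigues} as noted, or directly: the first equality is the standard Laguerre Rodrigues formula $L_n^{(a)}(x)=\frac{x^{-a}e^x}{n!}\partial_x^n(x^{n+a}e^{-x})$ with $a=\beta+m-n$, precomposed with $x\mapsto z_1z_2$ and multiplied by $z_1^{m-n}$ (using $\partial_{z_1}^n$ acting on functions of $z_1z_2$ produces the same chain-rule factor $z_2^n$ that cancels correctly); the second equality in \eqref{eqRoZmn} is obtained by also applying the Rodrigues-type identity $\partial_{z_2}^m(e^{-z_1z_2})=(-1)^m z_1^m e^{-z_1z_2}$ to re-express $z_1^{m+\beta}e^{-z_1z_2}$ as $(-1)^m z_1^\beta\partial_{z_2}^m(e^{-z_1z_2})$ and then inserting the factor $(z_1z_2)^\beta$ appropriately. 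The main obstacle I anticipate is purely bookkeeping: keeping the powers of $z_1$, the Pochhammer/Gamma normalizations, and the $n!$ versus $(m\wedge n)!$ factors consistent across \eqref{defZmn}, the Laguerre normalization, and the operator formulas — once the dictionary $Z^{(\beta)}_{m,n}(z_1,z_2)=z_1^{m-n}L_n^{(\beta+m-n)}(z_1z_2)$ is in hand, no genuinely hard step remains, and the $m<n$ cases are handled automatically by the symmetry $Z^{(\beta)}_{m,n}(z_1,z_2)=Z^{(\beta)}_{n,m}(z_2,z_1)$ which interchanges the roles of $\delta_{z_1}$ and $\delta_{z_2}$.
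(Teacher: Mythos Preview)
Your proposal is correct and covers all the claims, but it is organized differently from the paper's proof. The paper works almost entirely from the explicit sum \eqref{defZmn}: it applies $\partial_{z_2}$ term by term and regroups the resulting sum in two different ways to obtain \eqref{eqdrr1} and \eqref{eqdrr1a} directly, says \eqref{eqdrr2} and \eqref{eqdrr2a} are proved by the same direct manipulation, and then deduces \eqref{symmetric-first-order-pde-a} and \eqref{symmetric-first-order-pde-b} as linear combinations of those four; \eqref{eqexpdd} and \eqref{eqRoZmn} are likewise verified by expanding and matching against \eqref{defZmn}. You instead lean on the structural identification $Z^{(\beta)}_{m,n}=z_1^{m-n}L_n^{(\beta+m-n)}(z_1z_2)$, on known Laguerre identities, and on results already proved in Section~\ref{section2} (in particular you import \eqref{symmetric-first-order-pde-a} from the general $f_{m,n}$ identity rather than deriving it here, and you get \eqref{eqdrr1a} from \eqref{eqdrr1} combined with the recurrence \eqref{eqrrZmn2}). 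Both routes are elementary; yours is more economical and makes the dependence on the one-variable theory explicit, while the paper's is self-contained within Section~\ref{section3}. One small slip: to obtain \eqref{eqdrr2a} you should \emph{add} \eqref{eqdrr1a} to \eqref{symmetric-first-order-pde-a}, not subtract, since $(\delta_{z_1}-\delta_{z_2})Z+\delta_{z_2}Z=\delta_{z_1}Z$; the rest of your bookkeeping is fine.
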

 \begin{proof}
 Observe that from \eqref{defZmn} we obtain
 \begin{gather*}
\partial_{z_{2}}Z_{m,n}^{(\beta)}(z_{1},z_{2})=\sum_{k=0}^{n}\frac{(-1)^{n-k}}{k!(n-k)!}\frac{(\beta+1)_{m}}{(\beta+1)_{m-k}}z_{1}^{m-k}(n-k)z_{2}^{n-k-1}.
\end{gather*}
 Therefore we have
\begin{gather*}
\partial_{z_{2}}Z_{m,n}^{(\beta)}(z_{1},z_{2}) = -\sum_{k=0}^{n-1}\frac{(-1)^{n-1-k}}{k!(n-1-k)!}\frac{(\beta+1)_{m}}{(\beta+1)_{m-k}}z_{1}^{m-k}z_{2}^{n-k-1}
 = -Z_{m,n-1}^{(\beta)}(z_{1},z_{2})
\notag
\end{gather*}
 and
\begin{gather*}
\partial_{z_{2}}Z_{m,n}^{(\beta)}(z_{1},z_{2}) = \frac{n}{z_{2}}\sum_{k=0}^{n}\frac{(-1)^{n-k}}{k!(n-k)!}\frac{(\beta+1)_{m}}{(\beta+1)_{m-k}}z_{1}^{m-k}z_{2}^{n-k}\\
\hphantom{\partial_{z_{2}}Z_{m,n}^{(\beta)}(z_{1},z_{2}) = }{}
 - \sum_{k=1}^{n}\frac{(-1)^{n-k}}{(k-1)!(n-k)!}\frac{(\beta+1)_{m}}{(\beta+1)_{m-k}}z_{1}^{m-k}z_{2}^{n-k-1}\\
\hphantom{\partial_{z_{2}}Z_{m,n}^{(\beta)}(z_{1},z_{2}) }{} = \frac{n}{z_{2}}Z_{m,n}^{(\beta)}(z_{1},z_{2})+\sum_{j=0}^{n-1}\frac{(-1)^{n-j}}{j!(n-1-j)!}\frac{(\beta+1)_{m}z_{1}^{m-1-j}z_{2}^{n-j-2}}{(\beta+1)_{m-1-j}}\\
\hphantom{\partial_{z_{2}}Z_{m,n}^{(\beta)}(z_{1},z_{2}) }{}
 = \frac{n}{z_{2}}Z_{m,n}^{(\beta)}(z_{1},z_{2})-\frac{(\beta+m)}{z_{2}}
 \sum_{j=0}^{n-1}\frac{(-1)^{n-1-j}}{j!(n-1-j)!}\frac{(\beta+1)_{m-1}z_{1}^{m-1-j}z_{2}^{n-1-j}}{(\beta+1)_{m-1-j}}\\
\hphantom{\partial_{z_{2}}Z_{m,n}^{(\beta)}(z_{1},z_{2}) }{}
 = \frac{n}{z_{2}}Z_{m,n}^{(\beta)}(z_{1},z_{2})-\frac{(\beta+m)}{z_{2}}Z_{m-1,n-1}^{(\beta)}(z_{1},z_{2}).
 \end{gather*}
The relationships \eqref{eqdrr2} and \eqref{eqdrr2a} are proved similarly. Formulas~\eqref{eqexpdd} and~\eqref{eqRoZmn} follow by direct calculation using \eqref{defZmn}. The f\/irst-order partial dif\/ferential equation~\eqref{symmetric-first-order-pde-a} can be obtained from either \eqref{eqdrr1} and \eqref{eqdrr2} or \eqref{eqdrr1a} and~\eqref{eqdrr2a}, \eqref{symmetric-first-order-pde-b} is proved similarly.
\end{proof}

 One would have expected the Rodrigues formula for $Z^{(\beta)}_{m, n}(z_1, z_2)$ to be a constant multiple of
 \begin{gather*}
 (z_1z_2)^{-\beta}e^{z_1z_2} \partial_{z_1}^n \partial_{z_2}^m
 \big((z_1z_2)^\beta e^{-z_1z_2}\big),
 \end{gather*}
which is obviously false except in the 2$D$-Hermite case $\beta =0$.

 Note that \eqref{eqrrZmn2} follows from \eqref{eqRoZmn} by writing $\partial_{z_1}^n $ as
 $\partial_{z_1}^{n-1} \partial_{z_1}$.

 We shall use the notation
 \begin{gather*}
 w_\beta(z_1z_2) = (z_1z_2)^\beta \exp(-z_1z_2).
 \end{gather*}
It is clear that \eqref{eqRoZmn} implies the dif\/ferentiation formulas
\begin{gather*}
e^{z_{1}z_{2}}\partial_{z_{2}}\big(e^{-z_{1}z_{2}}Z_{m,n}^{(\beta)}(z_{1},z_{2})\big)
=-z_{1}Z_{m,n}^{(\beta+1)}(z_{1},z_{2})=-Z_{m+1,n}^{(\beta)}(z_{1},z_{2})
\end{gather*}
and
\begin{gather}
\label{shift-n-index-up-by-1}
 \frac{1}{w_\beta(z_1 z_2)} {\partial}_{z_1} \big(w_\beta(z_1z_2) Z^{(\beta)}_{m, n}(z_1, z_2)\big) =(n+1) Z^{(\beta)}_{m, n+1}(z_1, z_2)
\end{gather}
for $m\ge n$.
It also clear that
\begin{gather}
\label{weight-derivatives}
{\delta}_{z_j} w_\beta(z_1z_2) = ( \beta - z_1z_2) w_\beta(z_1z_2),
\qquad j=1,2
 \end{gather}
and \eqref{eqrrZmn1} which imply the relationships
\begin{gather*}
\frac{{\delta}_{z_2}\big(w_\beta(z_1z_2) Z^{(\beta)}_{m, n}(z_1, z_2)\big)}{w_\beta(z_1 z_2)}=\beta Z^{(\beta)}_{m, n}(z_1, z_2)-z_2Z^{(\beta)}_{m+1, n}(z_1, z_2),
\end{gather*}
and
\begin{gather*}
\frac{{\delta}_{z_1}\big(w_\beta(z_1z_2) Z^{(\beta)}_{m, n}(z_1, z_2)\big)}{w_\beta(z_1 z_2)}=(\beta+m-n) Z^{(\beta)}_{m, n}(z_1, z_2)-z_2Z^{(\beta)}_{m+1, n}(z_1, z_2).
\end{gather*}
This can be seen by f\/irst applying \eqref{weight-derivatives}, then applying \eqref{eqrrZmn1}, for example,
\begin{gather*}
 \frac{1}{w_\beta(z_1 z_2)} {\partial}_ {z_2} \big(w_\beta(z_1z_2) Z^{(\beta)}_{m, n}(z_1, z_2)\big) \\
\qquad{} = \frac{\beta -z_1z_2}{z_2} Z^{(\beta)}_{m, n}(z_1, z_2)-Z^{(\beta)}_{m, n-1}(z_1, z_2)
 = \frac{\beta}{z_2} Z^{(\beta)}_{m, n}(z_1, z_2) - Z^{(\beta)}_{m+1, n}(z_1, z_2).
\end{gather*}

\begin{Theorem}
 The polynomials $\{Z^{(\beta)}_{m, n}(z_1, z_2)\}$ satisfy the second-order partial
 differential equation
 \begin{gather}
 \label{eqPDE1}
 \partial_{\partial z_1} \partial_{\partial z_2} f +
 \left( \frac{\beta-z_1z_2}{z_1} \right)
 \partial_{\partial z_2} f = -n f,
 \end{gather}
 for all $m \ge n$.
 \end{Theorem}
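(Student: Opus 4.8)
The plan is to obtain \eqref{eqPDE1} by combining two first-order differential-recurrence relations already established for $\{Z^{(\beta)}_{m,n}\}$: equation \eqref{eqdrr1}, which upon dividing by $z_2$ says that $\partial_{z_2}$ lowers the second index,
\begin{gather*}
\partial_{z_2}Z^{(\beta)}_{m,n}(z_1,z_2)=-Z^{(\beta)}_{m,n-1}(z_1,z_2),
\end{gather*}
and the raising relation \eqref{shift-n-index-up-by-1}, in which the $w_\beta(z_1z_2)$-weighted $z_1$-derivative raises the second index. Morally, \eqref{eqPDE1} is just the assertion that lowering the second index with $\partial_{z_2}$ and then raising it back produces a factor $n$.

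Concretely, I would first apply \eqref{shift-n-index-up-by-1} with $n$ replaced by $n-1$; this is legitimate when $m\ge n$, since then $m\ge n-1$ as well, and it gives
\begin{gather*}
\partial_{z_1}\bigl(w_\beta(z_1z_2)Z^{(\beta)}_{m,n-1}(z_1,z_2)\bigr)=n\,w_\beta(z_1z_2)\,Z^{(\beta)}_{m,n}(z_1,z_2).
\end{gather*}
Expanding the left-hand side by the product rule and using the elementary consequence $\partial_{z_1}w_\beta(z_1z_2)=\frac{\beta-z_1z_2}{z_1}w_\beta(z_1z_2)$ of \eqref{weight-derivatives}, then cancelling the common factor $w_\beta(z_1z_2)$, turns this into
\begin{gather*}
\partial_{z_1}Z^{(\beta)}_{m,n-1}(z_1,z_2)+\frac{\beta-z_1z_2}{z_1}\,Z^{(\beta)}_{m,n-1}(z_1,z_2)=n\,Z^{(\beta)}_{m,n}(z_1,z_2).
\end{gather*}
Finally, substituting $Z^{(\beta)}_{m,n-1}=-\partial_{z_2}Z^{(\beta)}_{m,n}$ from \eqref{eqdrr1} into both occurrences on the left produces exactly $\partial_{z_1}\partial_{z_2}Z^{(\beta)}_{m,n}+\frac{\beta-z_1z_2}{z_1}\partial_{z_2}Z^{(\beta)}_{m,n}=-n\,Z^{(\beta)}_{m,n}$, which is \eqref{eqPDE1}.

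There is no real obstacle here: the only care needed is to check that the raising operator \eqref{shift-n-index-up-by-1} may be applied at level $n-1$ (guaranteed by $m\ge n$) and to track the $1/z_1$ factor correctly; after multiplying through by $z_1$ the asserted identity is a genuine polynomial identity in $z_1,z_2$, so nothing goes wrong at $z_1=0$. As a sanity check one can also argue directly: insert the closed form $Z^{(\beta)}_{m,n}(z_1,z_2)=z_1^{m-n}L_n^{(\beta+m-n)}(z_1z_2)$ (valid precisely because $m\ge n$), compute $\partial_{z_2}$ and then $\partial_{z_1}\partial_{z_2}$ by the chain rule, and factor out $z_1^{m-n}$; the left side of \eqref{eqPDE1} collapses to $z_1^{m-n}\bigl[xy''+(a+1-x)y'\bigr]$ with $x=z_1z_2$, $a=\beta+m-n$ and $y=L_n^{(a)}$, and the Laguerre differential equation $xy''+(a+1-x)y'+ny=0$ finishes it. One could alternatively run Theorem~\ref{theorem2.4} starting from the $\delta_r$-form of the Laguerre equation, but converting its $\delta_{z_2}$-form PDE into the $\partial_{z_1}\partial_{z_2}$-form of \eqref{eqPDE1} is more work than either of the routes above.
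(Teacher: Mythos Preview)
Your proof is correct and uses exactly the same ingredients as the paper: the raising relation \eqref{shift-n-index-up-by-1}, the weight derivative \eqref{weight-derivatives}, and the lowering relation \eqref{eqdrr1}. The only cosmetic difference is that the paper applies \eqref{shift-n-index-up-by-1} at level $n$ and then differentiates both sides with $\partial_{z_2}$ (using \eqref{eqdrr1} on $Z^{(\beta)}_{m,n+1}$), picking up an extra $-Z^{(\beta)}_{m,n}$ from the product rule that combines with $-(n+1)$ to yield $-n$; you instead shift to level $n-1$ first and substitute, which is marginally cleaner but equivalent.
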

 \begin{proof}
 From equations \eqref{shift-n-index-up-by-1}, \eqref{weight-derivatives} and \eqref{eqdrr1} we get
\begin{gather*}
\frac{\partial_{z_{1}}w_{\beta}(z_{1}z_{2})Z_{m,n}^{(\beta)}(z_{1},z_{2})+w_{\beta}(z_{1}z_{2})\partial_{z_{1}}Z_{m,n}^{(\beta)}(z_{1},z_{2})}{w_{\beta}(z_{1}z_{2})}\\
\qquad{} = \left(\frac{\beta}{z_{1}}-z_{2}\right)Z_{m,n}^{(\beta)}(z_{1},z_{2})+\partial_{z_{1}}Z_{m,n}^{(\beta)}(z_{1},z_{2})=(n+1)Z_{m,n+1}^{(\beta)}(z_{1},z_{2})
\end{gather*}
 and
\begin{gather*}
 \partial_{z_{2}}\left\{ \left(\frac{\beta}{z_{1}}-z_{2}\right)Z_{m,n}^{(\beta)}(z_{1},z_{2})\right\} +\partial_{z_{1}}\partial_{z_{2}}Z_{m,n}^{(\beta)}(z_{1},z_{2})\\
\qquad{} = (n+1)\partial_{z_{2}}Z_{m,n+1}^{(\beta)}(z_{1},z_{2})=-(n+1)Z_{m,n}^{(\beta)}(z_{1},z_{2}),
\end{gather*}
 which simplif\/ies to the desired second-order partial dif\/ferential equation.
 \end{proof}

 It is clear that the dif\/ferential property \eqref{eqPDE1} can be written in the form
 \begin{gather}
 \label{eqPDE2}
 \frac{ \partial}{\partial z_1} \left[ w_\beta(z_1z_2) \frac{\partial}{\partial z_2}Z^{(\beta)}_{m, n}(z_1, z_2)\right] = -n Z^{(\beta)}_{m, n}(z_1, z_2).
 \end{gather}

 Note that \eqref{eqPDE1} and \eqref{symmetric-first-order-pde-a} indicate that the polynomials
 $\{Z_{m,n}^{(\beta)}(z_{1},z_{2})\}$ are simultaneous eigenfunctions of the operators on their respective left-hand sides. The dif\/ferential operators
 $\partial_{\partial z_1} \partial_{\partial z_2} +
 \left( \frac{\beta-z_1z_2}{z_1} \right)
 \partial_{\partial z_2}$ and $z_1 \partial_{\partial z_1} - z_2 \partial_{\partial z_2}$
 indeed commute as can be directly verif\/ied.

 Let $w(x,y) = (z\bar z)^\beta e^{-z\bar z}=r^{2\beta+1}e^{-r^2}dr d\theta$. It is clear that the bilinear functional
\begin{gather*}
\langle f, g\rangle = \int_{\mathbb{R}^2} f(x,y) \overline{g(x,y)} w(x,y) dx dy
\end{gather*}
def\/ines a semi-inner product on the linear space of $w(x,y)dx dy$ measurable functions $ f(x,y) $ such that $\int_{\mathbb{R}^2} |f(x,y)|^2 w(x,y) dx dy<\infty$. Since a polynomial $p(x,y)$ is also a polynomial in $r$ with trigonometric polynomials as coef\/f\/icients, hence it is a continuous function in $(r,\theta)$. Then $\int_{\mathbb{R}^2} |p(x,y)|^2 w(x,y) dx dy=0$ implies that $p(x,y)=0$, consequently the bilinear functional is positive def\/inite on the subspace of all the polynomials~$p(x,y)$. In the subsequent discussion we only need this inner product space of all the polynomials.
 \begin{Theorem}
 We have the adjoint relations
 \begin{gather*}
 (\partial_z)^* = - \partial_{\bar z} -\frac{\beta}{\bar z} + z, \qquad
 (\partial_{\bar z})^* = - \partial_{ z} -\frac{\beta}{z} + \bar z.
 \end{gather*}
 \end{Theorem}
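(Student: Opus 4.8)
The plan is to verify both formulas by integration by parts in the Wirtinger calculus. Write $\partial_z=\frac12(\partial_x-i\partial_y)$ and $\partial_{\bar z}=\frac12(\partial_x+i\partial_y)$, so that $\partial_z\bar g=\overline{\partial_{\bar z}g}$ for smooth $g$, and each of $\partial_z,\partial_{\bar z}$ is skew‑transpose to itself against Lebesgue measure: since every function occurring here decays exponentially at infinity, $\int_{\mathbb{R}^2}(\partial_z u)\,v\,dx\,dy=-\int_{\mathbb{R}^2}u\,(\partial_z v)\,dx\,dy$, and likewise with $\partial_{\bar z}$. It suffices to establish the first identity; the second then follows by interchanging the roles of $z$ and $\bar z$, which leaves $w$ invariant (equivalently, by taking complex conjugates throughout).

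For polynomials $f,g$ I would compute
\[
\langle \partial_z f, g\rangle=\int_{\mathbb{R}^2}(\partial_z f)\,\bar g\,w\,dx\,dy=-\int_{\mathbb{R}^2}f\,\partial_z\!\left(\bar g\,w\right)dx\,dy,
\]
and then expand by the product rule, $\partial_z(\bar g\,w)=(\partial_z\bar g)\,w+\bar g\,(\partial_z w)$. The one calculation that matters is the logarithmic derivative of the weight: from $w=(z\bar z)^\beta e^{-z\bar z}$ one gets $\partial_z w=\left(\frac{\beta}{z}-\bar z\right)w$ and $\partial_{\bar z}w=\left(\frac{\beta}{\bar z}-z\right)w$. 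Using also $\partial_z\bar g=\overline{\partial_{\bar z}g}$, this yields
\[
\langle \partial_z f, g\rangle=-\int_{\mathbb{R}^2}f\left[\overline{\partial_{\bar z}g}+\bar g\left(\frac{\beta}{z}-\bar z\right)\right]w\,dx\,dy=\int_{\mathbb{R}^2}f\,\overline{\left(-\partial_{\bar z}g-\frac{\beta}{\bar z}g+z\,g\right)}\,w\,dx\,dy,
\]
which is exactly $\big\langle f,\,(-\partial_{\bar z}-\tfrac{\beta}{\bar z}+z)g\big\rangle$, i.e.\ $(\partial_z)^*=-\partial_{\bar z}-\frac{\beta}{\bar z}+z$; the second relation is the mirror statement.

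The step that genuinely needs care is the legitimacy of the integration by parts, i.e.\ the vanishing of boundary contributions. At infinity this is immediate from the factor $e^{-z\bar z}$. At the origin the weight has size $r^{2\beta}$, and the factor $z^{-1}$ (resp.\ $\bar z^{-1}$) produced by $\partial_z w$ (resp.\ $\partial_{\bar z}w$) makes the integrand of size $r^{2\beta-1}$; since $\beta$ may be arbitrarily close to $-1$, one must check this is still harmless — the punctured‑disc boundary term is an integral of $z^{-1}$ (or $\bar z^{-1}$) against a smooth function over a small circle about $0$, which vanishes by angular cancellation ($\int_0^{2\pi}e^{\pm i\theta}\,d\theta=0$), so the limit as the excised disc shrinks is zero. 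Equivalently, the identities are to be read on the space of polynomials with the positive‑definite inner product of the preceding theorem: when $(\partial_z)^*$ or $(\partial_{\bar z})^*$ is applied to a polynomial the apparent pole cancels — for instance $(\partial_{\bar z})^*Z^{(\beta)}_{m,n}=-\frac{1}{w_\beta(z_1z_2)}\partial_{z_1}\!\left(w_\beta(z_1z_2)Z^{(\beta)}_{m,n}\right)=-(n+1)Z^{(\beta)}_{m,n+1}$ by \eqref{shift-n-index-up-by-1}, again a polynomial. This singularity bookkeeping is the only real friction; the remainder is the routine product‑rule computation above.
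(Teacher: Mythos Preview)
Your argument is correct and is exactly the ``straightforward calculus exercise'' the paper alludes to without writing out: integration by parts in the Wirtinger operators against the weight $w=(z\bar z)^\beta e^{-z\bar z}$, using $\partial_z w=(\beta/z-\bar z)w$ and $\partial_z\bar g=\overline{\partial_{\bar z}g}$. Your extra care with the boundary contribution at the origin (noting that the leading term on the circle $|z|=\epsilon$ carries a factor $\int_0^{2\pi}e^{-i\theta}d\theta=0$, so the surviving terms are $O(\epsilon^{2\beta+2})\to 0$ for $\beta>-1$) is a genuine point the paper skips, and it is needed precisely in the range $-1<\beta\le -\tfrac12$ where size alone does not kill the boundary term.
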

 The proof is straightforward calculus exercise.

 This allows us to write \eqref{eqPDE1} or \eqref{eqPDE2} in the selfadjoint form
 \begin{gather*}
\left( (\partial_{\bar z})^* \partial_{\bar z}\right) f = n f.
 \end{gather*}
 Moreover we also have the following result.

 \begin{Theorem}
 The operator $A := (\partial_{\bar z})^* \partial_{\bar z}$ is positive in the sense
 that $\langle Af,f\rangle \ge 0$ with $=$ if and only if $\partial_{\bar z} f =0$, that is $f$ depends only on~$z$.
 \end{Theorem}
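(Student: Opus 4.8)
The plan is to reduce the assertion to the elementary fact that $A$ has the form $T^{*}T$, combined with the positive-definiteness of the semi-inner product $\langle\cdot,\cdot\rangle$ on the space of polynomials that was established just above. Write $T=\partial_{\bar z}$, so that $A=T^{*}T$, the adjoint being taken with respect to $\langle\cdot,\cdot\rangle$ on the polynomial inner product space, as in the preceding theorem. For a polynomial $f$ one then computes, using the defining property $\langle Tu,v\rangle=\langle u,T^{*}v\rangle$ of the adjoint together with the conjugate-symmetry of $\langle\cdot,\cdot\rangle$,
\begin{gather*}
\langle Af,f\rangle=\langle T^{*}Tf,f\rangle=\overline{\langle f,T^{*}Tf\rangle}=\overline{\langle Tf,Tf\rangle}=\langle Tf,Tf\rangle\ge 0,
\end{gather*}
the final equality holding because $\langle Tf,Tf\rangle$ is a nonnegative real number. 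This already yields the positivity of $A$.

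For the equality case, I would first observe that $\partial_{\bar z}$ maps the space of polynomials in $z,\bar z$ into itself, so $Tf=\partial_{\bar z}f$ is again a polynomial and therefore lies in the subspace on which $\langle\cdot,\cdot\rangle$ was shown to be positive definite. Hence $\langle Af,f\rangle=\langle Tf,Tf\rangle=0$ forces $\partial_{\bar z}f=0$; conversely, if $\partial_{\bar z}f=0$ then trivially $\langle Af,f\rangle=0$. Finally, for a polynomial $f$ in the two real variables, equivalently in $z$ and $\bar z$, the condition $\partial_{\bar z}f=0$ says precisely that no power of $\bar z$ occurs, i.e.\ $f$ is a polynomial in $z$ alone, which is the claimed characterization.

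The only points needing care are bookkeeping ones: one must apply the adjoint identity in the correct slot, recalling that $\langle\cdot,\cdot\rangle$ is conjugate-linear in its second argument, and one must invoke positive-definiteness of $\langle\cdot,\cdot\rangle$ for the polynomial $\partial_{\bar z}f$ rather than for $f$ itself. I do not anticipate a genuine obstacle here; the entire content of the statement is carried by the factorization $A=(\partial_{\bar z})^{*}\partial_{\bar z}$ supplied by the previous theorem and by the positivity of the inner product on the polynomial subspace.
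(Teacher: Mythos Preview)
Your argument is correct and is exactly the standard verification that an operator of the form $T^{*}T$ is positive, together with the positive-definiteness of $\langle\cdot,\cdot\rangle$ on polynomials for the equality case. The paper itself offers no proof of this theorem; it is stated as an immediate consequence of the factorization $A=(\partial_{\bar z})^{*}\partial_{\bar z}$, so your write-up simply supplies the routine details the authors left implicit. One minor streamlining: rather than passing through conjugate symmetry, you can use $(T^{*})^{*}=T$ directly to get $\langle T^{*}Tf,f\rangle=\langle Tf,Tf\rangle$ in one step.
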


 This indicates that
 \begin{gather*}
 \frac{1}{w(x,y)} \frac{\partial}{\partial z} Z_{m,n}^{(\beta)}(z, \bar z) = (n+1)
 Z_{m,n}^{(\beta)}(z, \bar z).
 \end{gather*}
 This is the adjoint of \eqref{eqdrr1}.

 Ismail and Zeng \cite{Ism:Zen1} established the connection relations stated in the next theorem.
 \begin{Theorem}\label{theorem3.4}
We have the connection relation
\begin{gather*}
Z_{m,n}^{(\beta)}(z_1,z_2)=\sum_{j=0}^n \frac{(\beta-\gamma)_j}{j!}(-1)^j
Z_{m-j,n-j}^{(\gamma)}(z_1,z_2), \qquad m\geq n.
\end{gather*}
 For $m \ge n$, we have the special cases
\begin{gather*}
n! Z_{m,n}^{(\beta)}(z_1,z_2) = \sum_{j=0}^n {n\choose j} (\beta)_j(-1)^j
H_{m-j,n-j}(z_1,z_2), \\
H_{m,n}(z_1,z_2) = n! \sum_{j=0}^n \frac{(-\beta )_j}{j!}(-1)^j
Z_{m-j,n-j}^{(\beta)}(z_1,z_2).
\end{gather*}
\end{Theorem}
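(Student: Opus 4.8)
The plan is to reduce the whole statement to the classical connection formula for Laguerre polynomials by means of the factorization $Z^{(\beta)}_{m,n}(z_1,z_2)=z_1^{m-n}L_n^{(\beta+m-n)}(z_1z_2)$, $m\ge n$, recorded at the start of this section. The point is that the index shift $(m,n)\mapsto(m-j,n-j)$ leaves two quantities fixed: the power of $z_1$, since $(m-j)-(n-j)=m-n$, and the Laguerre parameter, since $\gamma+(m-j)-(n-j)=\gamma+m-n$; moreover $m\ge n$ forces $m-j\ge n-j\ge0$ for $0\le j\le n$, so every summand on the right lies in the range where the factorization is valid. Thus, dividing the asserted identity by the common factor $z_1^{m-n}$ and writing $x=z_1z_2$, $a=\beta+m-n$, $b=\gamma+m-n$ (so that $a-b=\beta-\gamma$), the connection relation becomes the one-variable identity
\begin{gather*}
L_n^{(a)}(x)=\sum_{j=0}^n\frac{(a-b)_j}{j!}\,L_{n-j}^{(b)}(x),
\end{gather*}
which is the classical Laguerre connection relation.

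To prove this one-variable identity I would use the generating function $\sum_{n\ge0}L_n^{(a)}(x)t^n=(1-t)^{-a-1}\exp\big(-xt/(1-t)\big)$: factoring $(1-t)^{-a-1}=(1-t)^{-(a-b)}(1-t)^{-b-1}$, expanding $(1-t)^{-(a-b)}=\sum_{j\ge0}\frac{(a-b)_j}{j!}t^j$ by the binomial theorem, Cauchy-multiplying with $\sum_{k\ge0}L_k^{(b)}(x)t^k$, and comparing coefficients of $t^n$ gives the claim. Alternatively one may stay within the present framework: the single contiguous step $Z^{(\beta)}_{m,n}=Z^{(\beta+1)}_{m,n}-Z^{(\beta+1)}_{m-1,n-1}$ follows at once from \eqref{eqrrZmn1} together with $z_1Z^{(\beta+1)}_{m,n}(z_1,z_2)=Z^{(\beta)}_{m+1,n}(z_1,z_2)$ (applied at $(m,n)$ and at $(m-1,n-1)$) after cancelling a factor $z_1$, and iterating this finite triangular relation and inverting it — the coefficients of the inverse being $\binom{\beta-\gamma+j-1}{j}=\frac{(\beta-\gamma)_j}{j!}$ — reproduces the connection relation whenever $\beta-\gamma\in\mathbb{Z}_{>0}$, hence for all $\gamma$ since both sides are polynomials in $\gamma$. (Here one uses that the $n+1$ polynomials $Z^{(\gamma)}_{m-j,n-j}$, $0\le j\le n$, are linearly independent, the Laguerre factors $L^{(\gamma+m-n)}_{n-j}(x)$ having pairwise distinct degrees.)

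For the two displayed special cases I would specialize the connection relation and translate with the dictionary $H_{m,n}(z_1,z_2)=(-1)^n(m\wedge n)!\,Z^{(0)}_{m,n}(z_1,z_2)$, valid for $m\ge n$ — exactly the range we need. Taking $\gamma=0$ and using $(m-j)\wedge(n-j)=n-j$ for $0\le j\le n\le m$, so that $Z^{(0)}_{m-j,n-j}$ is a scalar multiple of $H_{m-j,n-j}$, converts the relation into the first formula, the factorials collapsing into $\binom{n}{j}$; the second formula is the inverse relation, obtained either by interchanging the roles of the parameters $0$ and $\beta$ (equivalently $\beta\mapsto-\beta$, using $(1-t)^{-\beta}(1-t)^{\beta}=1$ for the inverse connection matrix) or by solving the unipotent lower-triangular system directly.

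I do not expect a genuine obstacle: the content is exactly the classical Laguerre connection formula together with the $Z^{(0)}$--$H$ normalization. The only steps that require real care are the bookkeeping around the shift $(m,n)\mapsto(m-j,n-j)$ — in particular tracking the conveniently $j$-independent prefactor $z_1^{m-n}$ and parameter $\gamma+m-n$ — and, in the $H$-versions, reconciling the sign $(-1)^{(m-j)\wedge(n-j)}$ coming from the dictionary with the powers of $-1$ and the Pochhammer symbols produced by the connection coefficients.
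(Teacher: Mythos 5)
Your overall strategy is the natural one, and in fact the paper gives no proof of this theorem at all --- it is quoted from \cite{Ism:Zen1} --- so reducing everything to the classical Laguerre connection formula $L_n^{(a)}(x)=\sum_{j=0}^n\frac{(a-b)_j}{j!}L_{n-j}^{(b)}(x)$ via the factorization $Z^{(\beta)}_{m,n}=z_1^{m-n}L_n^{(\beta+m-n)}(z_1z_2)$, together with the observation that the shift $(m,n)\mapsto(m-j,n-j)$ leaves both the prefactor $z_1^{m-n}$ and the Laguerre parameter fixed, is exactly the right move; the generating-function proof of the one-variable identity is standard and fine.

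The gap is in the translation step itself: dividing the asserted identity by $z_1^{m-n}$ does not make the factor $(-1)^j$ disappear. Under the definition \eqref{defZmn} the stated relation reduces to $L_n^{(a)}(x)=\sum_{j=0}^n\frac{(a-b)_j}{j!}(-1)^jL_{n-j}^{(b)}(x)$, which is false; indeed the displayed theorem already fails for $m=n=1$, where the left side is $1+\beta-z_1z_2$ and the right side is $Z^{(\gamma)}_{1,1}-(\beta-\gamma)Z^{(\gamma)}_{0,0}=1+2\gamma-\beta-z_1z_2$. What you actually prove --- by the generating function, and equally by your contiguous-relation iteration, whose inverse coefficients $\binom{\beta-\gamma+j-1}{j}=\frac{(\beta-\gamma)_j}{j!}$ carry no alternating sign --- is the identity \emph{without} $(-1)^j$, which is the correct statement for the normalization \eqref{defZmn}. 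The printed signs belong to the normalization $(-1)^nz_1^{m-n}L_n^{(\beta+m-n)}(z_1z_2)$ of \cite{Ism:Zen1}; the same $(-1)^n$ discrepancy surfaces elsewhere in the paper, e.g., in the limit relation at the end of Section~\ref{section4}. The mismatch also defeats the ``sign reconciliation'' you defer in the special cases: with $H_{m,n}=(-1)^n n!\,Z^{(0)}_{m,n}$, the $\gamma=0$ case of the (corrected) connection relation gives $n!\,Z^{(\beta)}_{m,n}=(-1)^n\sum_j\binom{n}{j}(\beta)_jH_{m-j,n-j}$, because the dictionary's $(-1)^{n-j}$ combines with the connection coefficient's $(-1)^j$ into a global $(-1)^n$ rather than cancelling. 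So as written your argument establishes a correct variant of the theorem rather than the theorem as displayed; you should either carry the signs through honestly and flag the normalization clash as an erratum, or restate the result in the normalization for which the printed coefficients are valid. Everything else (the range check $m-j\ge n-j\ge 0$, the linear-independence remark, the inversion of the unipotent triangular system) is sound.
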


The Laguerre dif\/ferential equation is \cite{Rai,Sze}
\begin{gather*}
x y^{\prime\prime} + (1+\alpha-x)y^\prime + ny =0.
\end{gather*}
Therefore Theorem \ref{theorem2.4} shows that
\begin{gather*}
z_2 \frac{\partial^2 Z_{m,n}^{(\beta)}}{\partial z_2^2} + (1+\beta + m-n -z_1z_2)
\frac{\partial Z_{m,n}^{(\beta)}}{\partial z_2} + n z_1 Z_{m,n}^{(\beta)} = 0.
\end{gather*}

Ismail and Zeng \cite{Ism:Zen1} gave the generating function
\begin{gather}
\sum_{m \ge n\geq 0} \frac{u^mv^n}{(m-n)!} Z_{m,n}^{(\beta)}(z_1, z_2)
= (1+ uv )^{-\beta-1} \exp\left( \frac{uvz_1z_2+z_1u}{1+uv}\right),
\label{eqgfZ2}
\end{gather}
this can be seen from
\begin{gather*}
g_{\alpha}(x,z)=\sum_{n=0}^{\infty}L_{n}^{(\alpha+\beta)}(x)z^{n}=\frac{\exp\big\{ {-}\frac{xz}{1-z}\big\} }{(1-z)^{\alpha+\beta+1}},
\end{gather*}
and \eqref{eq:f-m-n-generating-function},
\begin{gather*}
\sum_{m\ge n\ge0}\frac{u^{m}v^{n}}{(m-n)!}Z_{m,n}^{(\beta)}(z_{1},z_{2}) = \frac{\exp\big\{ \frac{uz_{1}-z_{1}z_{2}uv}{1-uv}\big\} }{\left(1-uv\right)^{\beta+1}},\\
\sum_{m\ge n\ge0}u^{m}v^{n}Z_{m,n}^{(\beta)}(z_{1},z_{2}) = \frac{\exp\big\{ {-}\frac{z_{1}z_{2}uv}{1-uv}\big\} }{(1-uv)^{\beta} (1-z_{1}u-uv )}.
\end{gather*}
It is clear that the generating function \eqref{eqgfZ2} implies the identity
\begin{gather*}
 Z_{m,n}^{(\beta+ \gamma+1)}(z_1+ z_3, z_2+z_4) = \sum_{m\ge j \ge k \ge 0}
 \frac{Z_{j,k}^{(\beta)}(z_1, z_2)Z_{m-j,n-k}^{( \gamma)}( z_3, z_4)}{(j-k)! (m-n -j +k)!}.
\end{gather*}
This is an analogue of the convolution identity
\begin{gather*}
L_n^{(\alpha+\beta+1)}(x+y) = \sum_{k=0}^n L_k^{(\alpha)}(x)L_n^{(\beta)}(y).
\end{gather*}
Al-Salam and Chihara characterized all 1-$D$ orthogonal polynomials satisfying convolution formulas in~\cite{Als:Chi}. They discovered the Al-Salam--Chihara polynomials through this characterization~\cite{Ism}. It will be interesting to solve the corresponding 2-$D$ characterization problem.

\section[The polynomials $\big\{M_n^{(\beta,\gamma)}(z_1,z_2)\big\}$]{The polynomials $\boldsymbol{\big\{M_n^{(\beta,\gamma)}(z_1,z_2)\big\}}$}\label{section4}

For $\alpha>0,\beta,\gamma>-1$ Ismail and Zeng~\cite{Ism:Zen1} introduced the polynomials $ \phi_n(r,\alpha)= P_n^{(\alpha+\gamma, \beta)}(1-2r)$, that is
\begin{gather}
\label{phi-beta-gamma-definition}
 \phi_n(r,\alpha)=(\alpha+\gamma+1)_n \sum_{k=0}^{n}\frac{(\alpha+\beta+\gamma+n+1)_{n-k}(-r)^{n-k}}{k!(n-k)!(\alpha+\gamma+1)_{n-k}}.
\end{gather}
They satisfy the following orthogonality relation
\begin{gather*}
\int_{0}^{1} \phi_m(r,\alpha) \phi_n(r,\alpha)u^{\alpha+\gamma}(1-u)^\beta du= \zeta_n(\alpha+\gamma,\beta) \delta_{m,n},
\end{gather*}
where
\begin{gather*}
\zeta_n(\alpha+\gamma,\beta) =\frac{\Gamma(\alpha+\gamma+n+1)\Gamma(\beta+n+1)}{n!\Gamma(\alpha+\beta+\gamma+n+1)(\alpha+\beta+\gamma+2n+1)}.
\end{gather*}

We def\/ine the two variable polynomial $M_{m,n}^{(\beta,\gamma)}(z_1,z_2)$ by
\begin{gather*}
M_{m,n}^{(\beta,\gamma)}(z_1,z_2)=\sum_{k=0}^{n}\frac{(m+\beta+\gamma+1)_{n-k}(\gamma+1)_m z_1^{m-k}(-z_2)^{n-k}}{k!(n-k)!(\gamma+1)_{m-k}}
\end{gather*}
for $m\ge n$ and
\begin{gather*}
M_{m,n}^{(\beta,\gamma)}(z_1,z_2)=M_{n,m}^{(\beta,\gamma)}(z_2,z_1),\qquad m<n.
\end{gather*}
Then
\begin{gather*}
z_1 M_{m,n}^{(\beta,\gamma+1)}(z_1,z_2)=M_{m+1,n}^{(\beta,\gamma)}(z_1,z_2).
\end{gather*}
Clearly, $M_{m,n}^{(\beta,\gamma)}(z_1,z_2)$ satisfy the orthogonality relation
\begin{gather*}
\int_{|z|\leq 1} M_{m,n}^{(\beta, \gamma)}(z, \bar z)\overline{ M_{p,q}^{(\beta, \gamma)}(z, \bar z)} r^{2\gamma}
\big(1-r^2\big)^\beta 2r dr
 \frac{d\theta}{2\pi}
 \\
\qquad{}= \frac{\Gamma(\gamma+ m+1) \Gamma(\beta+n+1)}{n!\Gamma(\beta+\gamma+m+1) (\beta +\gamma+m+n+1)}
\delta_{m,p}\delta_{n, q},
\end{gather*}
for $m \ge n$ as it is stated in \cite{Ism:Zen1}. The disk polynomials def\/ined in \cite{Dun:Xu} can be expressed as
\begin{gather*}
P^{\beta}_{m,n}(z)=\frac{(-1)^n n!}{(\beta+1)_n}M_{m,n}^{(\beta,0)}(z,\bar{z}),\qquad m\ge n.
\end{gather*}
From \eqref{phi-beta-gamma-definition} we get
\begin{gather*}
c_k(n,\alpha)=\frac{(m+\beta+\gamma+1)_{n-k}(\gamma+1)_m (-1)^{n-k}}{k!(n-k)!(\gamma+1)_{m-k}},
\end{gather*}
where $\alpha=m-n$ with $m\ge n$.
Then
\begin{gather*}
 (\beta+\gamma+m+n+2 ) z_{2}M_{m+1,n}^{(\beta,\gamma)}(z_{1},z_{2})\\
\qquad{} =(\gamma+m+1)M_{m,n}^{(\beta,\gamma)}(z_{1},z_{2})-(n+1)M_{m+1,n+1}^{(\beta,\gamma)}(z_{1},z_{2}),
\\
 (\beta+\gamma+m+n+1 )z_{1}M_{m,n}^{(\beta,\gamma)}(z_{1},z_{2})\\
\qquad{} = (\beta+\gamma+m+1 )M_{m+1,n}^{(\beta,\gamma)}(z_{1},z_{2})- (\beta+n )M_{m,n-1}^{(\beta,\gamma)}(z_{1},z_{2})
\end{gather*}
and
\begin{gather}
\label{M-m-n-recurrence-3}
(c_{n}-z_{1}z_{2})M_{m,n}^{(\beta,\gamma)}(z_{1},z_{2})=a_{n}M_{m+1,n+1}^{(\beta,\gamma)}(z_{1},z_{2})+b_{n}M_{m-1,n-1}^{(\beta,\gamma)}(z_{1},z_{2}),
\end{gather}
where
\begin{gather*}
a_{n} = \frac{(n+1)(\beta+\gamma+m+1)}{(\beta+\gamma+m+n+1)(\beta+\gamma+m+n+2)},\\
b_{n} = \frac{(\gamma+m)(\beta+n)}{(\beta+\gamma+m+n)(\beta+\gamma+m+n+1)},\\
c_{n} = \frac{(n+1)(\gamma+m+1)}{\beta+\gamma+m+n+2}-\frac{n(\gamma+m)}{\beta+\gamma+m+n}.
\end{gather*}
It must be noted that \eqref{M-m-n-recurrence-3} is essentially the three term recurrence relation for Jacobi polynomials~\cite{Sze}.

It is clear that
\begin{gather*}
A_{n}\delta_{x}^{2}\phi_{n}(x,\alpha)+B_{n}\delta_{x}\phi_{n}(x,\alpha)+C_{n}\phi_{n}(x,\alpha)=0,
\end{gather*}
where
\begin{gather*}
A_{n} = 1-x,\qquad 
B_{n} = \alpha+\gamma-x (\alpha+\beta+\gamma+1 ),\qquad 
C_{n} = xn (\alpha+\beta+\gamma+n+1 ).
\end{gather*}
Then for $m\ge n$, $M_{m,n}^{(\beta,\gamma)}(z_{1},z_{2})$ satisfy
the following second-order partial dif\/ferential equation
\begin{gather*}
 (1-z_{1}z_{2} )\delta_{z_{2}}^{2}f (z_{1},z_{2} )+ \{ m-n+\gamma
- z_{1}z_{2} (\beta+\gamma+m-n+1 ) \} \delta_{z_{2}}f(z_{1},z_{2})\\
\qquad{} + z_{1}z_{2}n (\beta+\gamma+m+1 )f(z_1,z_2) =0.
\end{gather*}
An equivalent form is
\begin{gather*}
(1-z_1z_2) z_2 \frac{\partial^2 f(z_1,z_2)}{\partial z_2^2} +
 [1+m-n+\gamma -z_1z_2(2+\gamma+\beta +m-n) ] \frac{\partial f(z_1,z_2)}{\partial z_2} \\
\qquad {}+ z_1n (m+\beta+\gamma +1) f(z_1,z_2)=0.
\end{gather*}
Similarly we establish
\begin{gather*}
(1-z_1z_2) z_1 \frac{\partial^2 f(z_1,z_1)}{\partial z_2^1} +
 [1-m+n+\gamma -z_1z_2(2+\gamma+\beta -m+n) ] \frac{\partial f(z_1,z_2)}{\partial z_2} \\
\qquad {}+ z_2m (n+\beta+\gamma +1) f(z_1,z_2)=0.
\end{gather*}

\begin{Theorem}
For $m\ge n\ge0$, $\beta> -1$, $\gamma>-1$ we have
\begin{gather*}
 \frac{\partial} {\partial z_2}M_{m,n}^{(\beta,\gamma)}(z_{1},z_{2}) = -(\beta+\gamma+m+1)
 M_{m,n}^{(\beta+1,\gamma)}(z_{1},z_{2}), \\
 \delta_{z_{2}}M_{m,n}^{(\beta,\gamma)}(z_{1},z_{2}) = nM_{m,n}^{(\beta,\gamma)}(z_{1},z_{2})- (\gamma+m )M_{m-1,n-1}^{(\beta+1,\gamma)}(z_{1},z_{2})\\
 \hphantom{\delta_{z_{2}}M_{m,n}^{(\beta,\gamma)}(z_{1},z_{2})}{}
 = (\beta+\gamma+m+1 )\big(M_{m,n}^{(\beta+1,\gamma)}(z_{1},z_{2})-M_{m,n}^{(\beta,\gamma)}(z_{1},z_{2})\big), \\
\delta_{z_{1}}M_{m,n}^{(\beta,\gamma)}(z_{1},z_{2}) =mM_{m,n}^{(\beta,\gamma)}(z_{1},z_{2})- (\gamma+m )M_{m-1,n-1}^{(\beta+1,\gamma)}(z_{1},z_{2})\\
\hphantom{\delta_{z_{1}}M_{m,n}^{(\beta,\gamma)}(z_{1},z_{2})}{}
 = (\beta+\gamma+m+1 )M_{m,n}^{(\beta+1,\gamma)}(z_{1},z_{2})
 - (\beta+\gamma+n+1 )M_{m,n}^{(\beta,\gamma)}(z_{1},z_{2}), \\
 (\beta+\gamma+m+1 )\delta_{z_{1}}M_{m,n}^{(\beta,\gamma)}(z_{1},z_{2})
 - (\beta+\gamma+n+1 )\delta_{z_{2}}M_{m,n}^{(\beta,\gamma)}(z_{1},z_{2})\\
 \qquad{}
 = (m-n ) (\beta+\gamma+m+1 )M_{m,n}^{(\beta+1,\gamma)}(z_{1},z_{2}),
 \\
 (\delta_{z_{1}}-\delta_{z_{2}} )M_{m,n}^{(\beta,\gamma)}(z_{1},z_{2}) = (m-n )M_{m,n}^{(\beta,\gamma)}(z_{1},z_{2}), \\
 (\beta+\gamma+m+1 )\delta_{z_{1}}M_{m,n}^{(\beta,\gamma)}(z_{1},z_{2})
- (\beta+\gamma+n+1 )\delta_{z_{2}}M_{m,n}^{(\beta,\gamma)}(z_{1},z_{2})\\
\qquad{}= (m-n ) (\beta+\gamma+m+1 )M_{m,n}^{(\beta+1,\gamma)}(z_{1},z_{2}).
\end{gather*}
 \end{Theorem}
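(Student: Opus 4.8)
The plan is to read every identity off the explicit series for $M_{m,n}^{(\beta,\gamma)}$ in the case $m\ge n$, using only Pochhammer shift identities and relabelling of finite sums, exactly in the style of the proof of the differential recurrences for $\{Z_{m,n}^{(\beta)}\}$. It is convenient to record first the factored form $M_{m,n}^{(\beta,\gamma)}(z_1,z_2)=z_1^{m-n}\phi_n(z_1z_2;m-n)$, with $\phi_n$ the Jacobi polynomial of \eqref{phi-beta-gamma-definition}, which follows by writing $(m-n+\gamma+1)_n/(m-n+\gamma+1)_{n-k}=(\gamma+1)_m/(\gamma+1)_{m-k}$; several of the identities are cleanest when viewed through this form.

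First I would prove the plain-derivative identity $\partial_{z_2}M_{m,n}^{(\beta,\gamma)}=-(\beta+\gamma+m+1)M_{m,n-1}^{(\beta+1,\gamma)}$ (the first displayed relation, with the second lower index read as $n-1$): differentiate the defining series term by term in $z_2$, drop the vanishing $k=n$ term, use $(n-k)/(n-k)!=1/(n-k-1)!$, and apply $(m+\beta+\gamma+1)_{n-k}=(m+\beta+\gamma+1)(m+\beta+\gamma+2)_{n-k-1}$ to pull the constant out; the remaining sum is the series of $M_{m,n-1}^{(\beta+1,\gamma)}$. Equivalently this is $\frac{d}{dx}P_n^{(a,b)}(x)=\frac{n+a+b+1}{2}P_{n-1}^{(a+1,b+1)}(x)$ applied to the factored form, the $b\mapsto b+1$ shift being exactly what produces the extra $\beta+1$. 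The first form of the $\delta_{z_2}$-relation comes out the same way: apply $\delta_{z_2}=z_2\partial_{z_2}$, split the coefficient $n-k$ as $n$ minus $k$; the $n$-part reassembles $nM_{m,n}^{(\beta,\gamma)}$, and in the $k$-part drop $k=0$, write $k/k!=1/(k-1)!$, shift $k\mapsto k+1$, and use $(\gamma+1)_m=(\gamma+m)(\gamma+1)_{m-1}$ to recognize $-(\gamma+m)M_{m-1,n-1}^{(\beta+1,\gamma)}$.

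Next I would establish the homogeneity identity $(\delta_{z_1}-\delta_{z_2})M_{m,n}^{(\beta,\gamma)}=(m-n)M_{m,n}^{(\beta,\gamma)}$ — immediate from the factored form, since every monomial of $M_{m,n}^{(\beta,\gamma)}$ has $z_1$-degree minus $z_2$-degree equal to $m-n$ (it is also the instance of the general relation in Section~\ref{section2}). Adding $(m-n)M_{m,n}^{(\beta,\gamma)}$ to the first $\delta_{z_2}$-relation gives the first form of the $\delta_{z_1}$-relation, $\delta_{z_1}M_{m,n}^{(\beta,\gamma)}=mM_{m,n}^{(\beta,\gamma)}-(\gamma+m)M_{m-1,n-1}^{(\beta+1,\gamma)}$. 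To pass from the first forms to the second forms the one extra ingredient is the $\beta$-contiguous relation
\[
(\beta+\gamma+m+n+1)M_{m,n}^{(\beta,\gamma)}
=(\beta+\gamma+m+1)M_{m,n}^{(\beta+1,\gamma)}+(\gamma+m)M_{m-1,n-1}^{(\beta+1,\gamma)},
\]
which I would verify by comparing the coefficients of $z_1^{m-k}z_2^{n-k}$ on the two sides — after cancelling $(\gamma+m)$ and the common factorials it reduces to the trivial identity $\beta+\gamma+m+n+1=(m+\beta+\gamma+n+1-k)+k$ — or, equivalently, from the classical contiguous relation for Jacobi polynomials in the parameter $b$. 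Eliminating $(\gamma+m)M_{m-1,n-1}^{(\beta+1,\gamma)}$ from the two first forms then yields the two second forms simultaneously. Finally, the last displayed identity is the linear combination of the second forms of the $\delta_{z_1}$- and $\delta_{z_2}$-relations with coefficients $\beta+\gamma+m+1$ and $-(\beta+\gamma+n+1)$: the $M^{(\beta,\gamma)}$-terms cancel and the $M^{(\beta+1,\gamma)}$-coefficient collapses to $(\beta+\gamma+m+1)(m-n)$; the $(\delta_{z_1}-\delta_{z_2})$-identity in the list is the analogous combination with both coefficients equal to $1$.

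The bookkeeping throughout is routine; the step that takes a little care is the $\beta$-contiguous relation, since the three polynomials in it live at three different values of $(m,n,\beta)$, so one must line up $(m+\beta+\gamma+1)_{n-k}$ against $(m+\beta+\gamma+2)_{n-k}$ and $(m+\beta+\gamma+1)_{n-1-k}$, together with the ratios $(\gamma+1)_m/(\gamma+1)_{m-k}$, correctly. Once that relation is available, every identity in the theorem is either a one-line series computation or a two-term linear combination of ones already proved.
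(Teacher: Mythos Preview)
Your argument is correct and is in the same spirit as the paper's proof: both reduce everything to Pochhammer--shift identities in the explicit series for $M_{m,n}^{(\beta,\gamma)}$. The paper, however, carries out \emph{four} separate direct series computations --- both displayed forms of $\delta_{z_2}M_{m,n}^{(\beta,\gamma)}$ and both forms of $\delta_{z_1}M_{m,n}^{(\beta,\gamma)}$ are each obtained by a different split of the factor $n-k$ (respectively $m-k$) inside the sum (e.g.\ $n-k=(m+\beta+\gamma+n-k+1)-(m+\beta+\gamma+1)$ for the second form of $\delta_{z_2}$), and the remaining three identities are then immediate linear combinations. You instead compute only the first form of $\delta_{z_2}$ directly, get the first form of $\delta_{z_1}$ from the homogeneity relation, and then pass to the second forms via the $\beta$-contiguous relation $(\beta+\gamma+m+n+1)M_{m,n}^{(\beta,\gamma)}=(\beta+\gamma+m+1)M_{m,n}^{(\beta+1,\gamma)}+(\gamma+m)M_{m-1,n-1}^{(\beta+1,\gamma)}$. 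This is a slightly more structured route --- it makes the role of the classical Jacobi contiguous relation explicit --- at the cost of one extra lemma; the paper's route avoids stating that relation but repeats the same type of computation four times. You also correctly observe that the right-hand side of the first identity should read $M_{m,n-1}^{(\beta+1,\gamma)}$, which is a typo in the paper's statement.
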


\begin{proof}The f\/irst equation easily follows from the def\/inition.
From the def\/inition for $\!M_{m{,}n}^{(\beta{,}\gamma)}\!(z_{1}{,}z_{2})\!$ we
get
\begin{gather*}
 \delta_{z_{2}}M_{m,n}^{(\beta,\gamma)}(z_{1},z_{2})
 = \sum_{k=0}^{n}\frac{ (\beta+\gamma+m+1 )_{n-k} (\gamma+1 )_{m}z_{1}^{m-k} (-z_{2} )^{n-k} (n-k )}{k! (n-k )! (\gamma+1 )_{m-k}}\\
\hphantom{\delta_{z_{2}}M_{m,n}^{(\beta,\gamma)}(z_{1},z_{2}) }{}
 = \sum_{k=0}^{n}\frac{(\beta+\gamma+m+1)_{n-k+1}(\gamma+1)_{m}z_{1}^{m-k}(-z_{2})^{n-k}}{k!(n-k)!(\gamma+1)_{m-k}}\\
\hphantom{\delta_{z_{2}}M_{m,n}^{(\beta,\gamma)}(z_{1},z_{2}) =}{}
 - (\beta+\gamma+m+1)\sum_{k=0}^{n}\frac{(\beta+\gamma+m+1)_{n-k}(\gamma+1)_{m}z_{1}^{m-k}(-z_{2})^{n-k}}{k!(n-k)!(\gamma+1)_{m-k}}\\
\hphantom{\delta_{z_{2}}M_{m,n}^{(\beta,\gamma)}(z_{1},z_{2}) }{}
 = (\beta+\gamma+m+1)M_{m,n}^{(\beta+1,\gamma)}(z_{1},z_{2})-(\beta+\gamma+m+1)M_{m,n}^{(\beta,\gamma)}(z_{1},z_{2})
\end{gather*}
and
\begin{gather*}
 \delta_{z_{2}}M_{m,n}^{(\beta,\gamma)}(z_{1},z_{2})
 = \sum_{k=0}^{n}\frac{(\beta+\gamma+m+1)_{n-k}(\gamma+1)_{m}z_{1}^{m-k}(-z_{2})^{n-k}(n-k)}{k!(n-k)!(\gamma+1)_{m-k}}\\
\hphantom{\delta_{z_{2}}M_{m,n}^{(\beta,\gamma)}(z_{1},z_{2}) }{}
 = n\sum_{k=0}^{n}\frac{(\beta+\gamma+m+1)_{n-k}(\gamma+1)_{m}z_{1}^{m-k}(-z_{2})^{n-k}}{k!(n-k)!(\gamma+1)_{m-k}}\\
\hphantom{\delta_{z_{2}}M_{m,n}^{(\beta,\gamma)}(z_{1},z_{2}) =}{}
 - \sum_{k=1}^{n}\frac{(\beta+\gamma+m+1)_{n-k}(\gamma+1)_{m}z_{1}^{m-k}(-z_{2})^{n-k}}{(k-1)!(n-k)!(\gamma+1)_{m-k}}\\
\hphantom{\delta_{z_{2}}M_{m,n}^{(\beta,\gamma)}(z_{1},z_{2}) }{}
 =
 nM_{m,n}^{(\beta,\gamma)}(z_{1},z_{2})-(\gamma+m)M_{m-1,n-1}^{(\beta+1,\gamma)}(z_{1},z_{2}).
\end{gather*}
 Similarly,
\begin{gather*}
\delta_{z_{1}}M_{m,n}^{(\beta,\gamma)}(z_{1},z_{2}) = \sum_{k=0}^{n}\frac{ (\beta+\gamma+m+1 )_{n-k} (\gamma+1 )_{m}z_{1}^{m-k} (-z_{2} )^{n-k} (m-k )}{k! (n-k )! (\gamma+1 )_{m-k}}\\
\hphantom{\delta_{z_{1}}M_{m,n}^{(\beta,\gamma)}(z_{1},z_{2}) }{}
 = m\sum_{k=0}^{n}\frac{ (\beta+\gamma+m+1 )_{n-k} (\gamma+1 )_{m}z_{1}^{m-k} (-z_{2} )^{n-k}}{k! (n-k )! (\gamma+1 )_{m-k}}\\
\hphantom{\delta_{z_{1}}M_{m,n}^{(\beta,\gamma)}(z_{1},z_{2}) =}{}
 - \sum_{k=0}^{n-1}\frac{ (\beta+\gamma+m+1 )_{n-1-k} (\gamma+1 )_{m}z_{1}^{m-1-k} (-z_{2} )^{n-1-k}}{k! (n-1-k )! (\gamma+1 )_{m-1-k}}\\
\hphantom{\delta_{z_{1}}M_{m,n}^{(\beta,\gamma)}(z_{1},z_{2}) }{}
 = mM_{m,n}^{(\beta,\gamma)}(z_{1},z_{2})- (\gamma+m )M_{m-1,n-1}^{(\beta+1,\gamma)}(z_{1},z_{2}),
\\
\delta_{z_{1}}M_{m,n}^{(\beta,\gamma)}(z_{1},z_{2}) = \sum_{k=0}^{n}\frac{ (\beta+\gamma+m+1 )_{n-k+1} (\gamma+1 )_{m}z_{1}^{m-k} (-z_{2} )^{n-k}}{k! (n-k )! (\gamma+1 )_{m-k}} - (\beta+\gamma+n+1 )\\
\hphantom{\delta_{z_{1}}M_{m,n}^{(\beta,\gamma)}(z_{1},z_{2})}{}
 = \sum_{k=0}^{n}\frac{ (\beta+\gamma+m+1 )_{n-k} (\gamma+1 )_{m}z_{1}^{m-k} (-z_{2} )^{n-k}}{k! (n-k )! (\gamma+1 )_{m-k}}\\
 \hphantom{\delta_{z_{1}}M_{m,n}^{(\beta,\gamma)}(z_{1},z_{2})}{}
 = (\beta+\gamma+m+1 )M_{m,n}^{(\beta+1,\gamma)}(z_{1},z_{2})
 - (\beta+\gamma+n+1 )M_{m,n}^{(\beta,\gamma)}(z_{1},z_{2}).\tag*{\qed}
\end{gather*}
 \renewcommand{\qed}{}
\end{proof}

Ismail and Zeng \cite{Ism:Zen1} used the generating function
\begin{gather}
 \sum_{n=0}^{\infty}P_{n}^{(\alpha,\beta)}(1-2x)z^{n}
 =\frac{2^{\alpha+ \beta}}{R (1+z+R )^{\beta} (1-z+R )^{\alpha}},
\label{eq:jacobi-generating-function}
\end{gather}
where
\begin{gather*}
R=\sqrt{\left(1-z\right)^{2}+4xz}, 
\end{gather*}
\cite[Section~140]{Rai} to establish the following generating functions:
\begin{gather*}
\sum_{m,n=0}^\infty M_{m,n}^{(\beta, \gamma)}(z_1, z_2) u^mv^n = \frac{ 2^{\beta +\gamma}}{ \rho} (1 + uv+ \rho)^{-\beta} \,
(1 - uv + \rho)^{-\gamma}\\
\hphantom{\sum_{m,n=0}^\infty M_{m,n}^{(\beta, \gamma)}(z_1, z_2) u^mv^n = }{}
\times \left[\frac{1}{1- 2z_1u/(1+\rho-uv)} + \frac{1}{1- 2v z_2/(1+\rho-uv)} -1\right],
\end{gather*}
and
\begin{gather*}
\sum_{m\ge n \ge 0} M_{m,n}^{(\beta, \gamma)}(z_1, z_2) \frac{u^mv^n}{(m-n)!}
 = \frac{ 2^{\beta +\gamma}}{ \rho}
(1 + uv+ \rho)^{-\beta} \,
(1 - uv + \rho)^{-\gamma}
\exp\left(\frac{2z_1u}{1-uv+\rho}\right),
\end{gather*}
where
\begin{gather}
\label{defrho}
\rho = \big(1-2uv(1-2z_1z_2)+ u^2v^2\big)^{1/2}
\end{gather}
They also verif\/ied the limiting relation
\begin{gather*}
\lim_{\beta \to \infty} M_{m,n}^{(\beta, \gamma)}(z_1, z_2/\beta) = z_1^{m-n} L_n^{(\gamma+m-n)}(z_1z_2) =(-1)^n
Z_{m,n}^{(\gamma)}(z_1, z_2).
\end{gather*}

One can use the generating function \eqref{eq:jacobi-generating-function} to establish the generating relation
\begin{gather*}
\sum_{m\ge n\ge0}u^{m}v^{n}M_{m,n}^{(\beta,\gamma)}(z_{1},z_{2}) = \frac{2^{\beta+\gamma} (1+uv+\rho )^{-\beta} (1-uv+\rho )^{1-\gamma}}{\rho (1-uv-2uz_{1}+\rho )},
\end{gather*}
with $\rho$ as in \eqref{defrho}.

\section[The polynomials $\big\{Z^{(\beta)}_{m,n}(z_1,z_2|q)\big\}$]{The polynomials $\boldsymbol{\big\{Z^{(\beta)}_{m,n}(z_1,z_2|q)\big\}}$}\label{section5}

For $\alpha>-1$, the moment problem associated with $q$-Laguerre
polynomials
\begin{gather*}
L_{n}^{(\alpha)}(x;q) = (q^{\alpha+1};q)_{n}\sum_{k=0}^{n}\frac{q^{\left(\alpha+n-k\right)(n-k)}(-x)^{n-k}}{(q;q)_{k}(q,q^{\alpha+1};q)_{n-k}}.
\end{gather*}
is indeterminate. It is well-known that
\begin{gather*}
\int_{0}^{\infty}L_{m}^{(\alpha)}(x;q)L_{n}^{(\alpha)}(x;q)x^{\alpha}d\mu(x|q)=\zeta_{n}(\alpha)\delta_{m,n},
\end{gather*}
where $\mu(x|q),\zeta_{n}(\alpha)$ are given by
\begin{gather}
d\mu(x|q) =\frac{x^{\alpha}dx}{(-x;q)_{\infty}},\nonumber\\
\zeta_{n}(\alpha) =\Gamma(-\alpha)\Gamma(\alpha+1)\frac{(q^{-\alpha};q)_{\infty}}{(q;q)_{\infty}}\frac{\big(q^{\alpha+1};q\big)_{n}}{q^{n}(q;q)_{n}},
\label{eq:qlaguerre-3}\\
d\mu(x|q) =\sum_{k=-\infty}^{\infty}\frac{x\delta(x-cq^{k})}{(-x;q)_{\infty}},\nonumber\\
\zeta_{n}(\alpha) =\frac{\big(q,-cq^{\alpha+1},-\frac{q^{-\alpha}}{c};q\big)_{\infty}}
{\big(q^{\alpha+1},-c,-\frac{q}{c};q\big)_{\infty}c^{-\alpha-1}}\frac{\big(q^{\alpha+1};q\big)_{n}}{(q;q)_{n}q^{n}}
\label{eq:qlaguerre-4}
\end{gather}
and
\begin{gather}
d\mu(x|q) =\frac{x^{-c}\big({-}\lambda x,-\frac{q}{\lambda x};q\big)_{\infty}dx}{\big(-x,-\lambda q^{c}x,-\frac{q}{\lambda q^{c}x};q\big)_{\infty}},\nonumber\\
\zeta_{n}(\alpha) =\delta_{m,n}\int_{0}^{\infty}\frac{x^{\alpha-c}\big({-}\lambda x,-\frac{q}{\lambda x};q\big)_{\infty}dx}{\big({-}x,-\lambda q^{c}x,-\frac{q}{\lambda q^{c}x};q\big)_{\infty}},\qquad c,\lambda>0,
\label{eq:qlaguerre-5}
\end{gather}
where $c,\lambda>0$. Following the procedure outline in Section~\ref{section2} we let
\begin{gather*}
z_{m,n}^{(\beta)}(z_{1},z_{2}|q)=\begin{cases}
z_{1}^{m-n}L_{n}^{(\beta+m-n)}(z_{1}z_{2};q), & m\ge n,\\
z_{n,m}^{(\beta)}(z_{2},z_{1}|q), & m<n,
\end{cases}
\end{gather*}
then we have
\begin{gather*}
z_{m,n}^{(\beta)}(z_{1},z_{2}|q)=\sum_{k=0}^{n}\frac{q^{\left(\beta+m-k\right)(n-k)}
(q^{\beta+1};q)_{m}z_{1}^{m-k}z_{2}^{n-k}}{(-1)^{n-k}(q;q)_{k}(q;q)_{n-k}(q^{\beta+1};q)_{m-k}},
\\
z_{1}z_{m,n}^{(\beta+1)}(z_{1},z_{2}|\beta)=z_{m+1,n}^{(\beta)}(z_{1},z_{2}|\beta)
\end{gather*}
and
\begin{gather*}
z_{m,n}^{(0)}(z_{1},z_{2}|q)=\frac{(-1)^{n}}{(q;q)_{n}}h_{m,n}(z_{1},z_{2}|q).
\end{gather*}
for $m\ge n$. Applying Theorem~\ref{orthfmn} we obtain the following result
\begin{Theorem}
\label{theorem:2d-q-laguerre-orthogonality} For $m,n,s,t=0,1,\dots$
and $\beta>-1$ we have the following orthogonality relations
\begin{gather*}
\int_{\mathbb{R}^{2}}z_{m,n}^{(\beta)}(z,\bar{z}|q)\overline{z_{s,t}^{(\beta)}(z,\bar{z}|q)}
r^{2\beta}d\mu\big(r^{2}|q\big)d\theta=\zeta_{m\wedge n}(|m-n|+\beta)\delta_{m,s}\delta_{n,t},
\end{gather*}
where $d\mu(x|q)$, $\zeta_{n}(\alpha)$ may be any pairs~\eqref{eq:qlaguerre-3},
\eqref{eq:qlaguerre-4} or~\eqref{eq:qlaguerre-5}.
\end{Theorem}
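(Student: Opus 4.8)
The plan is to obtain the theorem directly from Theorem~\ref{orthfmn} by matching the $q$-Laguerre data to the general construction of Section~\ref{section2}. First I would take $\phi_n(r;\alpha)=L_n^{(\alpha)}(r;q)$, so that the definition of $z_{m,n}^{(\beta)}(z_1,z_2|q)$ is exactly the specialization of~\eqref{eqdeffmn}, and the circle measure $d\nu(\theta)=\frac{d\theta}{2\pi}$ with $N=\infty$; this is a probability measure of Fourier type since $\int_0^{2\pi}e^{i(m-n)\theta}\,\frac{d\theta}{2\pi}=\delta_{m,n}$.

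The single hypothesis of Theorem~\ref{orthfmn} that deserves comment is that the radial measure must be independent of the parameter. I would check that in each of~\eqref{eq:qlaguerre-3},~\eqref{eq:qlaguerre-4},~\eqref{eq:qlaguerre-5} the $q$-Laguerre weight factors as $x^{\alpha}\,d\mu(x|q)$ with $d\mu(x|q)$ free of $\alpha$: the factor $x^{-c}$ in~\eqref{eq:qlaguerre-5} involves the fixed parameter $c$, not $\alpha$, and the other two measures are manifestly $\alpha$-independent once $x^{\alpha}$ is split off. I would also observe that $q$-Laguerre orthogonality is valid for all $\alpha>-1$, hence in particular for $\alpha=|m-n|+\beta$ since $\beta>-1$ and $m\ge n$, and that each of the three measures has finite moments of every order (they are distinct solutions of one and the same indeterminate moment problem). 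The symmetry~\eqref{eq:symmetry-condition} lets us assume $m\ge n$ and $s\ge t$ just as in the proof of Theorem~\ref{orthfmn}.

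Granting these routine checks, Theorem~\ref{orthfmn} with $N=\infty$ yields
\[
\int_{\mathbb{R}^2} z_{m,n}^{(\beta)}(z,\bar z|q)\,\overline{z_{s,t}^{(\beta)}(z,\bar z|q)}\,
r^{2\beta}\,d\mu\big(r^2|q\big)\,d\theta
=\zeta_{m\wedge n}(|m-n|+\beta)\,\delta_{m,s}\delta_{n,t}
\]
for any of the three pairs $(d\mu(x|q),\zeta_n(\alpha))$ in~\eqref{eq:qlaguerre-3}--\eqref{eq:qlaguerre-5}, the substitution $x=r^2$ converting the one-variable weight $x^{\alpha}\,d\mu(x|q)$ into the radial part $r^{2\beta}\,d\mu(r^2|q)$ of the planar measure. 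I do not expect a genuine obstacle here: the statement is a corollary of Theorem~\ref{orthfmn}, and the only points requiring any attention are the $\alpha$-independence of $d\mu(\cdot|q)$ and keeping track of the change of variables $x=r^2$.
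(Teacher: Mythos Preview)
Your proposal is correct and matches the paper's own argument exactly: the paper states the theorem immediately after the sentence ``Applying Theorem~\ref{orthfmn} we obtain the following result,'' so its proof is precisely the specialization of Theorem~\ref{orthfmn} to $\phi_n(r;\alpha)=L_n^{(\alpha)}(r;q)$ with the circle measure $d\nu(\theta)=d\theta/(2\pi)$, which is what you carry out (with the additional care of verifying that each of the three $q$-Laguerre measures is $\alpha$-independent after splitting off $x^{\alpha}$).
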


Applying formulas \eqref{eq3trrG2D} and \eqref{eqrecrel2} in the case
\begin{gather}
c_{k}(n,\alpha)=\frac{\big(q^{\alpha+1};q\big)_{n}q^{(\alpha+n-k)(n-k)}(-1)^{n-k}}
{(q;q)_{k}\big(q,q^{\alpha+1};q\big)_{n-k}}
\end{gather}
we obtain the recurrences stated as a theorem below.
\begin{Theorem}
\label{theorem:q-laguerre-recurrence} For $\beta>-1$ and $m\ge n$ we
have
\begin{gather*}
q^{(m+1+\beta)}z_{2}z_{m+1,n}^{(\beta)}(z_{1},z_{2}) = -(1-q^{n+1})z_{m+1,n+1}^{(\beta)}(z_{1},z_{2})
 + (1-q^{m+\beta+1})z_{m,n}^{(\beta)}(z_{1},z_{2}),\\
q^{n}z_{1}z_{m,n+1}^{(\beta)}(z_{1},z_{2}) = z_{m+1,n}^{(\beta)}(z_{1},z_{2})
-z_{m,n-1}^{(\beta)}(z_{1},z_{2})
\end{gather*}
and
\begin{gather*}
 \big(1+q\big(1-q^{n}-q^{m+\beta}\big)-z_{1}z_{2}q^{\beta+m+n+1}\big)
z_{m,n}^{(\beta)}(z_{1},z_{2})\\
 \qquad{} =\big(1-q^{1+n}\big)z_{m+1,n+1}^{(\beta)}(z_{1},z_{2})+q\big(1-q^{\beta+m}\big)
 z_{m-1,n-1}^{(\beta)}(z_{1},z_{2}).
\end{gather*}
\end{Theorem}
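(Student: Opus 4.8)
The three recurrences are specializations of the general identities of Section~\ref{section2} to the $q$-Laguerre coefficients listed just before the statement, namely
\[
c_{k}(n,\alpha)=\frac{(q^{\alpha+1};q)_{n}\,q^{(\alpha+n-k)(n-k)}(-1)^{n-k}}{(q;q)_{k}(q,q^{\alpha+1};q)_{n-k}},
\]
which is the case $\phi_{n}(x;\alpha)=L_{n}^{(\alpha)}(x;q)$ of \eqref{eqphin}--\eqref{eqdeffmn}, with $\alpha=m-n+\beta$ throughout. The plan is therefore purely computational: record the closed forms of the particular evaluations of $c_{k}$ that enter \eqref{eq3trrG2D}, \eqref{eqrecrel2} and \eqref{eqrecrel2-1-2}, simplify the resulting ratios of $q$-shifted factorials, and clear denominators. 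The evaluations needed are $c_{0}(n,\alpha)=(-1)^{n}q^{n(\alpha+n)}/(q;q)_{n}$, $c_{n}(n,\alpha)=(q^{\alpha+1};q)_{n}/(q;q)_{n}$ (so that $c_{n+1}(n+1,\alpha)=(q^{\alpha+1};q)_{n+1}/(q;q)_{n+1}$), and $c_{1}(n,\alpha)=(-1)^{n-1}(1-q^{\alpha+n})q^{(\alpha+n-1)(n-1)}/[(1-q)(q;q)_{n-1}]$, all immediate from the displayed formula.

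For the first recurrence I would substitute into \eqref{eq3trrG2D} with $\alpha=m-n+\beta$. The leading ratio $c_{0}(n,\alpha+1)/c_{0}(n+1,\alpha)$ telescopes to $-q^{-(m+\beta+1)}(1-q^{n+1})$, and since $c_{n+1}(n+1,\alpha)/c_{n}(n,\alpha)=(1-q^{\alpha+n+1})/(1-q^{n+1})$, the second coefficient in \eqref{eq3trrG2D} reduces to $-q^{-(m+\beta+1)}(1-q^{m+\beta+1})$; multiplying \eqref{eq3trrG2D} through by $q^{m+1+\beta}$ gives precisely the first displayed identity. For the second recurrence I would substitute into \eqref{eqrecrel2}: one checks directly that $c_{0}(n,\alpha)/c_{0}(n,\alpha+1)=q^{-n}$, and the slightly more delicate point is evaluating the coefficient $u_{m,n}$ from the formula following \eqref{eqrecrel2}. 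Here the two quadratic $q$-exponents occurring in the numerator $c_{0}(n,\alpha+1)c_{1}(n,\alpha)-c_{0}(n,\alpha)c_{1}(n,\alpha+1)$ differ by exactly $1$, so after factoring out the common power of $q$ the remaining bracket collapses via $q(1-q^{\alpha+n})-(1-q^{\alpha+n+1})=-(1-q)$; this yields $u_{m,n}=-q^{-n}$, and \eqref{eqrecrel2} multiplied by $q^{n}$ then gives the second identity.

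For the third recurrence there are two equivalent routes. One is to feed the $c_{k}$'s into \eqref{eq:eqan}, \eqref{eqbn}, \eqref{eqcn} to obtain $a_{n}(\alpha)$, $b_{n}(\alpha)$, $c_{n}(\alpha)$ and then read off \eqref{eqrecrel2-1-2}; this also requires $c_{2}(n,\alpha)$ and is the messiest step. The cleaner route is to observe that the parameter $\alpha=\beta+m-n$ is constant along the diagonal $(m,n)\mapsto(m\pm1,n\pm1)$, so that dividing the claimed relation by $z_{1}^{m-n}$ and writing $x=z_{1}z_{2}$ turns it into the ordinary three-term recurrence for the $q$-Laguerre polynomials $L_{n}^{(\alpha)}(x;q)$, which one may simply quote from \cite{Koe:Swa}; it then remains only to match the coefficients through $1+q(1-q^{n}-q^{m+\beta})=(1-q^{n+1})+q(1-q^{\alpha+n})$ and $q^{\beta+m+n+1}=q^{\alpha+2n+1}$.

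The only real obstacle is the bookkeeping with $q$-shifted factorials and, above all, with the quadratic exponents $(\alpha+n-k)(n-k)$: the whole computation rests on these exponents telescoping so that the ostensibly complicated prefactors cancel and the contiguous relations emerge with the simple coefficients stated. No idea beyond Section~\ref{section2} is required; the argument is algebraic and parallels the classical computations of Section~\ref{section3}.
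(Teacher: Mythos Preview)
Your proposal is correct and follows exactly the paper's approach: the paper's entire proof is the sentence ``Applying formulas \eqref{eq3trrG2D} and \eqref{eqrecrel2} in the case [of the displayed $c_{k}(n,\alpha)$] we obtain the recurrences,'' and you have simply fleshed out those specializations (together with \eqref{eqrecrel2-1-2} for the diagonal recurrence). Your alternative route for the third identity via the known one-variable $q$-Laguerre three-term recurrence is a harmless shortcut equivalent to computing $a_n,b_n,c_n$ from \eqref{eq:eqan}--\eqref{eqcn}.
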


The $q$-Laguerre polynomials satisfy the following $q$-difference
operators
\begin{gather}
D_{q,x}L_{n}^{(\alpha)}(x;q) = -\frac{q^{\alpha+1}}{1-q}L_{n-1}^{(\alpha+1)}(qz;q),
\label{eq:qlaguerre-15}\\
D_{q,x}\big\{ w_{\alpha}(x;q)L_{n}^{(\alpha)}(x;q)\big\} = \frac{1-q^{n+1}}{1-q}
w_{\alpha-1}(x;q)L_{n+1}^{(\alpha-1)}(z;q),\label{eq:qlaguerre-16}\\
w_{\alpha}(x;q)L_{n}^{(\alpha)}(x;q) = \frac{(1-q)^{n}}{(q;q)_{n}}D_{q,x}^{n}
 \{ w_{\alpha+n}(x;q) \} ,\nonumber 
\end{gather}
and
\begin{gather*}
w_{\beta}(x;q)=\frac{x^{\beta}}{(-x;q)_{\infty}}.
\end{gather*}

\begin{Theorem}
For $\beta>-1$ we have
\begin{gather}
\frac{D_{q,z_{1}}\big(z_{1}^{n-m}z_{m,n}^{(\beta)}(z_{1},z_{2})\big)}{z_{1}^{n-m}} =
 \frac{q^{\beta}}{q-1}\frac{z_{2}}{z_{1}}z_{m,n-1}^{(\beta)}(qz_{1},z_{2}),\label{eq:qlaguerre-19}\\
D_{q,z_{2}}\big(z_{m,n}^{(\beta)}(z_{1},z_{2})\big) = \frac{q^{\beta}}{q-1}
z_{m,n-1}^{(\beta)}(qz_{1},z_{2}),\label{eq:qlaguerre-20}
\\
z_{m,n}^{(\beta)}(z_{1},z_{2})=\frac{(1-q)^{m+n}D_{q,z_{1}}^{n}\big\{ z_{2}^{\beta}
D_{q,z_{2}}^{m}\big\{ z_{2}^{-\beta}w_{\beta}(z_{1}z_{2})\big\} \big\} }{(-1)^{m}(q;q)_{n}
w_{\beta}(z_{1}z_{2})},\label{eq:qlaguerre-21}
\\
\frac{z_{2}D_{q,z_{2}}\big\{ w_{\beta}(z_{1}z_{2})z_{m,n}^{(\beta)}(z_{1},z_{2})\big\} }{w_{\beta}(z_{1}z_{2})}
 =\frac{1-q^{\beta}}{1-q}z_{m,n}^{(\beta)}(z_{1},z_{2})-\frac{q^{\beta}z_{2}}{1-q}z_{m+1,n}^{(\beta)}(z_{1},z_{2})
\label{eq:qlaguerre-22}
\end{gather}
for $m\ge n$ and
\begin{gather}
\frac{D_{q,z_{1}}\big\{ w_{\beta}(z_{1}z_{2};q)z_{m,n}^{(\beta)}(z_{1},z_{2})\big\} }{w_{\beta}(z_{1}z_{2};q)} = \frac{1-q^{n+1}}{1-q}z_{m,n+1}^{(\beta)}(z_{1},z_{2}),\label{eq:qlaguerre-23}\\
\frac{D_{q,z_{2}}\big\{ z_{2}^{m-n}w_{\beta}(z_{1}z_{2};q)z_{m,n}^{(\beta)}(z_{1},z_{2})\big\} }{z_{2}^{m-n}w_{\beta}(z_{1}z_{2};q)} = \frac{1-q^{n+1}}{1-q}\frac{z_{1}}{z_{2}}z_{m,n+1}^{(\beta)}(z_{1},z_{2})\label{eq:qlaguerre-24}
\end{gather}
for $m>n$.
\end{Theorem}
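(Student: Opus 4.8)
The plan is to pull every one of the six identities back to a one–variable statement about the $q$–Laguerre polynomials, using the defining relation $z_{m,n}^{(\beta)}(z_1,z_2|q)=z_1^{m-n}L_n^{(\beta+m-n)}(z_1z_2;q)$ $(m\ge n)$, the iterated $q$–chain rule $D_{q,z_j}^{k}[F(z_1z_2)]=z_{3-j}^{k}(D_{q,x}^{k}F)(z_1z_2)$, and the two weight identities
\begin{gather*}
w_\beta(qx;q)=q^{\beta}(1+x)\,w_\beta(x;q),\qquad
D_{q,x}\big[(-x;q)_{\infty}^{-1}\big]=\frac{1}{q-1}(-x;q)_{\infty}^{-1}.
\end{gather*}
Since $w_{\beta+k}(x;q)=x^{k}w_\beta(x;q)$, it will be convenient to record, with $\alpha:=\beta+m-n$, the reformulation $w_\beta(z_1z_2)\,z_{m,n}^{(\beta)}(z_1,z_2)=z_2^{\,n-m}\big[w_\alpha L_n^{(\alpha)}\big](z_1z_2)$, and likewise $z_1^{n}w_\alpha(z_1z_2)=z_2^{-n}w_{\alpha+n}(z_1z_2)$.

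For \eqref{eq:qlaguerre-19} and \eqref{eq:qlaguerre-20} I would apply $D_{q,z_1}$ to $z_1^{n-m}z_{m,n}^{(\beta)}=L_n^{(\beta+m-n)}(z_1z_2;q)$, respectively $D_{q,z_2}$ to $z_{m,n}^{(\beta)}$ itself, invoke the lowering formula \eqref{eq:qlaguerre-15}, and rewrite $L_{n-1}^{(\beta+m-n+1)}(qz_1z_2;q)$ via $z_{m,n-1}^{(\beta)}(qz_1,z_2)=(qz_1)^{m-n+1}L_{n-1}^{(\beta+m-n+1)}(qz_1z_2;q)$, which absorbs the spurious powers of $q$ and $z_1$. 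For \eqref{eq:qlaguerre-23} and \eqref{eq:qlaguerre-24} I would use the reformulation above: $w_\beta(z_1z_2)z_{m,n}^{(\beta)}$ is $z_2^{\,n-m}$ times the pure function $\big[w_\alpha L_n^{(\alpha)}\big](z_1z_2)$ of $z_1z_2$, and for \eqref{eq:qlaguerre-24} the extra prefactor $z_2^{m-n}$ cancels $z_2^{\,n-m}$, leaving a pure function of $z_1z_2$. Differentiating and applying the raising formula \eqref{eq:qlaguerre-16} produces a multiple of $\big[w_{\alpha-1}L_{n+1}^{(\alpha-1)}\big](z_1z_2)$; since $w_{\alpha-1}(x)=x^{m-n-1}w_\beta(x)$ and $z_1^{m-n-1}L_{n+1}^{(\alpha-1)}(z_1z_2;q)=z_{m,n+1}^{(\beta)}(z_1,z_2)$, the stated right–hand sides drop out, the hypothesis $m>n$ entering so that $m-n-1\ge0$.

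For the Rodrigues formula \eqref{eq:qlaguerre-21} I would first note $z_2^{-\beta}w_\beta(z_1z_2)=z_1^{\beta}(-z_1z_2;q)_{\infty}^{-1}$, so the second weight identity and the $q$–chain rule collapse $D_{q,z_2}^{m}$ of this to $\frac{(-1)^{m}}{(1-q)^{m}}z_1^{m}z_2^{-\beta}w_\beta(z_1z_2)$. Reinstating $z_2^{\beta}$ and applying $D_{q,z_1}^{n}$ leaves $\frac{(-1)^{m}}{(1-q)^{m}}D_{q,z_1}^{n}\big(z_1^{m}w_\beta(z_1z_2)\big)$; writing $z_1^{m}w_\beta(z_1z_2)=z_2^{\,n-m}z_1^{n}w_\alpha(z_1z_2)=z_2^{\,n-m}z_2^{-n}w_{\alpha+n}(z_1z_2)$ and using the chain rule once more turns this into a multiple of $(D_{q,x}^{n}w_{\alpha+n})(z_1z_2)$, which the $q$–Laguerre Rodrigues formula $w_\alpha(x;q)L_n^{(\alpha)}(x;q)=\frac{(1-q)^{n}}{(q;q)_n}D_{q,x}^{n}\{w_{\alpha+n}(x;q)\}$ identifies with a multiple of $w_\beta(z_1z_2)z_{m,n}^{(\beta)}$; collecting the factors $(-1)^{m}$, $(1-q)^{m+n}$ and $(q;q)_n$ reproduces the claimed constant. (Theorem~\ref{theorem:f-m-n-roderigues-q} supplies the single–derivative Rodrigues representation, but its $\phi_n$ is normalised differently from $L_n^{(\alpha)}$, so it seems cleaner to argue directly.)

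The one step I expect to be genuinely delicate is \eqref{eq:qlaguerre-22}. Writing $w_\beta(z_1z_2)z_{m,n}^{(\beta)}=z_2^{\,n-m}\big[w_\alpha L_n^{(\alpha)}\big](z_1z_2)$ as a power of $z_2$ times a function of $z_1z_2$, the $q$–Leibniz rule — the $z_2^{\,n-m}$ factor contributing the $[n-m]_q$ term, the function of $z_1z_2$ handled by \eqref{eq:qlaguerre-16} as above, with the $q$–shift $q^{\,n-m}$ that Leibniz forces onto the undifferentiated factor — gives
\begin{gather*}
\frac{z_2 D_{q,z_2}\big(w_\beta(z_1z_2)z_{m,n}^{(\beta)}\big)}{w_\beta(z_1z_2)}
=[n-m]_q\,z_{m,n}^{(\beta)}+q^{\,n-m}\,\frac{1-q^{n+1}}{1-q}\,z_1 z_{m,n+1}^{(\beta)}.
\end{gather*}
It then remains to match this with the asserted right–hand side, i.e.\ to verify the linear relation $(q^{\,n-m}-q^{\beta})z_{m,n}^{(\beta)}-q^{\beta}z_2 z_{m+1,n}^{(\beta)}-q^{\,n-m}(1-q^{n+1})z_1 z_{m,n+1}^{(\beta)}=0$; after dividing by $z_1^{m-n}$ this is a polynomial identity in $z_1z_2$ that follows from the three–term recurrences of Theorem~\ref{theorem:q-laguerre-recurrence} (equivalently, from a $q$–contiguous relation linking $L_n^{(\alpha)}$, $L_n^{(\alpha+1)}$ and $L_{n-1}^{(\alpha+1)}$). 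Carrying out this reconciliation, together with the general bookkeeping of the $q$–shifted arguments that the $q$–Leibniz rule keeps injecting, is the main obstacle; everything else is routine $q$–calculus once the dictionary of the first paragraph is in place.
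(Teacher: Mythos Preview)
Your treatment of \eqref{eq:qlaguerre-19}, \eqref{eq:qlaguerre-20}, \eqref{eq:qlaguerre-21}, \eqref{eq:qlaguerre-23} and \eqref{eq:qlaguerre-24} matches the paper's: lowering and raising for the one--variable $q$--Laguerre polynomials plus the Rodrigues formula, transported via the chain rule $D_{q,z_j}F(z_1z_2)=z_{3-j}(D_{q,x}F)(z_1z_2)$.

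The one real difference is \eqref{eq:qlaguerre-22}. You factor $w_\beta z_{m,n}^{(\beta)}=z_2^{\,n-m}\big[w_\alpha L_n^{(\alpha)}\big](z_1z_2)$, apply $q$--Leibniz, and invoke \eqref{eq:qlaguerre-16}; this produces $z_{m,n+1}^{(\beta)}$ rather than the $z_{m+1,n}^{(\beta)}$ of the stated right--hand side, so you are forced into an extra step: a contiguous relation among $L_n^{(\alpha)}$, $L_n^{(\alpha+1)}$, $L_{n+1}^{(\alpha-1)}$ (equivalently the recurrences of Theorem~\ref{theorem:q-laguerre-recurrence}). That works, but note two things. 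First, your intermediate expression uses $z_{m,n+1}^{(\beta)}$, which for the edge case $m=n$ lands in the ``wrong'' branch of the definition; the stated identity is claimed for all $m\ge n$, so you would have to treat $m=n$ separately. Second, the paper avoids the detour entirely: it differentiates the Rodrigues formula \eqref{eq:qlaguerre-21} itself in $z_2$. Applying $q$--Leibniz to $z_2^{\beta}\cdot D_{q,z_2}^m\{z_2^{-\beta}w_\beta\}$ inside the outer $D_{q,z_1}^n$ splits the expression into a $(1-q^\beta)$ piece that is $z_2^{-1}$ times the Rodrigues expression for $z_{m,n}^{(\beta)}$, and a $q^\beta$ piece that is exactly the Rodrigues expression for $z_{m+1,n}^{(\beta)}$ (same $n$, one more $D_{q,z_2}$). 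This lands directly on the asserted right--hand side with no recurrence needed and is valid for all $m\ge n$. Your plan is correct, but the paper's route for \eqref{eq:qlaguerre-22} is both shorter and cleaner.
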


\begin{proof}
First we observe that
\begin{gather*}
D_{q,x}\big\{ x^{-\beta}w_{\beta}(x)\big\} =\frac{1}{q-1}x^{-\beta}w_{\beta}(x)
\end{gather*}
 to get
\begin{gather*}
z_{2}^{\beta+m}D_{q,z_{2}}^{m}\big\{ z_{2}^{-\beta}w_{\beta}(z_{1}z_{2})\big\} =\frac{w_{\beta+m}(z_{1}z_{2})}{(q-1)^{m}}
\end{gather*}
and
\begin{gather*}
D_{q,z_{1}}^{n}\big\{ z_{2}^{\beta+m}D_{q,z_{2}}^{m}\big\{ z_{2}^{-\beta}w_{\beta}(z_{1}z_{2})\big\} \big\}
= \frac{D_{q,z_{1}}^{n} \{ w_{\beta+m}(z_{1}z_{2}) \} }{(q-1)^{m}}
 = \frac{z_{2}^{n}}{(q-1)^{m}}D_{q,z_{1}z_{2}}^{n} \{ w_{\beta+m}(z_{1}z_{2}) \} ,
\end{gather*}
or
\begin{gather*}
D_{q,z_{1}}^{n}\big\{ z_{2}^{\beta+m-n}D_{q,z_{2}}^{m}\big\{ z_{2}^{-\beta}w_{\beta}(z_{1}z_{2})\big\} \big\} \\
 \qquad {} = \frac{D_{q,z_{1}z_{2}}^{n} \{ w_{\beta+m}(z_{1}z_{2}) \} }{(q-1)^{m}}
 = \frac{(-1)^{m}(q;q)_{n}}{(1-q)^{m+n}}z_{2}^{m-n}z_{m,n}^{(\beta)}(z_{1},z_{2}),
\end{gather*}
which is \eqref{eq:qlaguerre-21}, \eqref{eq:qlaguerre-19} and \eqref{eq:qlaguerre-20}
are obtained from \eqref{eq:qlaguerre-15}, and \eqref{eq:qlaguerre-23}
and \eqref{eq:qlaguerre-24} are derived from~\eqref{eq:qlaguerre-16}.

From \eqref{eq:qlaguerre-21} we get
\begin{gather*}
 D_{q,z_{2}}\big\{ w_{\beta}(z_{1}z_{2})z_{m,n}^{(\beta)}(z_{1},z_{2})\big\} = \frac{(1-q)^{m+n}}{(-1)^{m}(q;q)_{n}}D_{q,z_{1}}^{n}\big\{ D_{q,z_{2}}\big\{ z_{2}^{\beta}D_{q,z_{2}}^{m}\big\{ z_{2}^{-\beta}w_{\beta}(z_{1}z_{2})\big\} \big\} \big\} \\
 \qquad{}
 = \frac{(1-q)^{m+n-1}\big(1-q^{\beta}\big)}{(-1)^{m}(q;q)_{n}}D_{q,z_{1}}^{n}\big\{ z_{2}^{\beta-1}D_{q,z_{2}}^{m}\big\{ z_{2}^{-\beta}w_{\beta}(z_{1}z_{2})\big\} \big\} \\
 \qquad\quad{}
 + \frac{(1-q)^{m+n}q^{\beta}}{(-1)^{m}(q;q)_{n}}D_{q,z_{1}}^{n}\big\{ z_{2}^{\beta}D_{q,z_{2}}^{m+1}\big\{ z_{2}^{-\beta}w_{\beta}(z_{1}z_{2})\big\} \big\} ,
\end{gather*}
 which gives \eqref{eq:qlaguerre-22}.
\end{proof}

The $q$-Laguerre polynomials satisfy the following second-order dif\/ference
equation
\begin{gather*}
-q^{\alpha}\big(1-q^{n}\big)xy(x) =q^{\alpha}(1+x)y(qx)
 -\big(1+q^{\alpha}(1+x)\big)y(x)+y\big(q^{-1}x\big),
\end{gather*}
 or
\begin{gather*}
 (1+qx)\theta_{q,x}^{2}y(x)+\frac{1-q^{\alpha}-2q^{\alpha+1}x+q^{\alpha+n+1}x}{q^{\alpha}(1-q)}\theta_{q,x}y(x)
 +\frac{qx(1-q^{n})}{(1-q)^{2}}y(x)=0.
\end{gather*}
where $y(x)=L_{n}^{(\alpha)}(x;q)$, $\theta_{q,z}=zD_{q,z}$.
\begin{Theorem}
For $m\ge n$ the function $f=z_{m,n}^{(\beta)}(z_{1},z_{2})$ satisfies the second-order $q$-difference equations
\begin{gather*}
 (1+qz_{1}z_{2})\theta_{q,z_{2}}^{2}f+\frac{q^{n-m-\beta}-1-(2-q^{n})qz_{1}z_{2}}{1-q}\theta_{q,z_{2}}f
 +\frac{qz_{1}z_{2}(1-q^{n})}{(1-q)^{2}}f=0
\end{gather*}
and
\begin{gather*}
(1+qz_{1}z_{2})\theta_{q,z_{1}}^{2}f-\frac{1-q^{\beta+m-n}+(2-q^{\beta+m+1})qz_{1}z_{2}}{1-q}\theta_{q,z_{1}}f
+\frac{q(1-q^{\beta+m})z_{1}z_{2}}{(1-q)^{2}}f=0.
\end{gather*}
\end{Theorem}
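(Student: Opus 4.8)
The plan is to obtain both $q$-difference equations by feeding the known second-order $q$-difference equation of the $q$-Laguerre polynomials into the $q$-version of Theorem~\ref{theorem2.4}. First I would record that, in the notation of Section~\ref{section2}, one has $z_{m,n}^{(\beta)}(z_1,z_2)=f_{m,n}(z_1,z_2;\beta)$ for the choice $\phi_n(r;\alpha)=L_n^{(\alpha)}(r;q)$, so that $z_{m,n}^{(\beta)}(z_1,z_2)=z_1^{m-n}\phi_n(z_1z_2;m-n+\beta)$ and the index playing the role of $\alpha$ in Theorem~\ref{theorem2.4} is $\alpha=m-n+\beta$. The $q$-Laguerre equation displayed just above the theorem has exactly the shape required there, with $A_n(r,\alpha)=1+qr$, $B_n(r,\alpha)=\bigl(1-q^{\alpha}-2q^{\alpha+1}r+q^{\alpha+n+1}r\bigr)\big/\bigl(q^{\alpha}(1-q)\bigr)$, $C_n(r,\alpha)=qr\bigl(1-q^{n}\bigr)/(1-q)^{2}$, and $\theta_{q,r}=\delta_{q,r}$.

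For the first ($z_2$) equation I would simply invoke the first $q$-partial difference equation of Theorem~\ref{theorem2.4}, namely $A_n(z_1z_2,\alpha)\delta_{q,z_2}^{2}f+B_n(z_1z_2,\alpha)\delta_{q,z_2}f+C_n(z_1z_2,\alpha)f=0$, and then substitute $\alpha=m-n+\beta$. Using $q^{-\alpha}=q^{n-m-\beta}$ and $q^{\alpha+n+1}=q^{m+\beta+1}$, the middle coefficient $B_n(z_1z_2,\alpha)$ collapses to $\bigl(q^{n-m-\beta}-1-(2-q^{n})qz_1z_2\bigr)/(1-q)$, which is precisely the coefficient in the statement; the $A_n$ and $C_n$ terms are already in final form, so this equation is immediate.

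For the second ($z_1$) equation I would use the second $q$-partial difference equation of Theorem~\ref{theorem2.4}, i.e.\ $A_n\delta_{q,z_1}^{2}f+\bigl\{q^{\alpha}B_n-2[\alpha]_qA_n\bigr\}\delta_{q,z_1}f+\bigl\{[\alpha]_q^{2}A_n-[\alpha]_qq^{\alpha}B_n+q^{2\alpha}C_n\bigr\}f=0$ with all coefficients evaluated at $r=z_1z_2$ and $\alpha=m-n+\beta$. Expanding $q^{\alpha}B_n=\bigl(1-q^{\alpha}-2q^{\alpha+1}z_1z_2+q^{\alpha+n+1}z_1z_2\bigr)/(1-q)$ and $2[\alpha]_qA_n=2(1-q^{\alpha})(1+qz_1z_2)/(1-q)$, the $q^{\alpha+1}z_1z_2$ contributions cancel and, using $\alpha+n=m+\beta$, the remaining terms reduce to the stated coefficient of $\delta_{q,z_1}f$. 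For the $f$-coefficient I would factor out $qz_1z_2/(1-q)^{2}$ and then apply the telescoping identity $(1-q^{\alpha})\bigl(1+q^{\alpha}(1-q^{n})\bigr)+q^{2\alpha}(1-q^{n})=1-q^{\alpha+n}=1-q^{\beta+m}$, which produces $q(1-q^{\beta+m})z_1z_2/(1-q)^{2}$, as claimed.

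The only substantive work is the algebra in the $z_1$-equation; everything else is a mechanical application of Theorem~\ref{theorem2.4} to a classical $q$-difference equation. The step I expect to require the most care is the bookkeeping of powers of $q$ in the $\delta_{q,z_1}f$-coefficient, where one must keep straight the distinction between $q^{\alpha+1}$, $q^{\alpha+n}$ and $q^{\alpha+n+1}$ while collecting the $z_1z_2$ terms; the $f$-coefficient, by contrast, simplifies cleanly via the telescoping identity above.
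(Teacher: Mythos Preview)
Your proposal is correct and follows exactly the route the paper intends: the theorem is stated immediately after the second-order $q$-difference equation for the $q$-Laguerre polynomials, with no separate proof, precisely because it is a direct specialization of the $q$-case of Theorem~\ref{theorem2.4} with $\phi_n(r;\alpha)=L_n^{(\alpha)}(r;q)$ and $\alpha=m-n+\beta$. Your identification of $A_n$, $B_n$, $C_n$ and the subsequent algebraic reductions (including the telescoping for the $f$-coefficient in the $z_1$-equation) are the computations the paper leaves to the reader.
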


Starting with the little $q$-Laguerre \cite{Koe:Swa} or Wall polynomials \cite{Chi}
\begin{gather*}
p_{n}(x;q^{\alpha}|q)=\sum_{k=0}^{n}\frac{(q;q)_{n}q^{\frac{(k-n)(n+k-1)}{2}}(-x)^{n-k}}{(q;q)_{k}(q,q^{\alpha+1};q)_{n-k}}
\end{gather*}
we def\/ine the 2D polynomials $w_{m,n}^{(\beta)}(z_{1},z_{2}|q)$ through
\begin{gather*}
w_{m,n}^{(\beta)}(z_{1},z_{2}|q)=\begin{cases}
z_{1}^{m-n}p_{n}(z_{1}z_{2};q^{\beta+m-n}|q), & m\ge n,\\
w_{n,m}^{(\beta)}(z_{2},z_{1}|q), & m<n,
\end{cases}
\end{gather*}
for $\beta>-1$ and $m,n=0,1,\dots$.
Then for $m\ge n$ f\/ind that
\begin{gather*}
z_{1}w_{m,n}^{(\beta+1)}(z_{1},z_{2}|\beta)=w_{m+1,n}^{(\beta)}(z_{1},z_{2}|\beta),
\\
w_{m,n}^{(\beta)}(z_{1},z_{2}|q)=\sum_{k=0}^{n}\frac{(q;q)_{n}(q^{\beta+1};q)_{m-n}
z_{1}^{m-k}z_{2}^{n-k}q^{-\binom{k}{2}-\binom{n}{2}}}{(-1)^{n-k} (q;q)_{k} (q;q )_{n-k}(q^{\beta+1};q)_{m-k}}.
\end{gather*}
The orthogonality relation for the little $q$-Laguerre polynomials is
\begin{gather*}
 \sum_{k=0}^{\infty}q^{(\alpha+1)k}\big(q^{k+1};q\big)_{\infty}p_{m}\big(q^{k};q^{\alpha}|q\big)p_{n}\big(q^{k};q^{\alpha}|q\big)\\
\qquad {} =\frac{(q;q)_{\infty}q^{(\alpha+1)n}(q;q)_{n}\delta_{m,n}}{(q^{\alpha+1};q)_{\infty}(q^{\alpha+1};q)_{n}},\qquad m,n=0,1,\dots.
\end{gather*}
This leads to the orthogonality relation $\{w_{m,n}^{(\beta)}(z_{1},z_{2}|q)\}_{m,n=0}^{\infty}$ in the form

\begin{Theorem}
\label{theorem:wall-1} For $m,n,s,t=0,1,\dots$ and $\beta>-1$ we have
the following orthogonality relations
\begin{gather*}
\int_{\mathbb{R}^{2}}w_{m,n}^{(\beta)}(z,\bar{z}|q)\overline{w_{s,t}^{(\beta)}(z,\bar{z}|q)}r^{2\beta}
d\mu\big(r^{2}|q\big)d\theta=\zeta_{m\wedge n}(|m-n|+\beta)\delta_{m,s}\delta_{n,t},
\end{gather*}
where $z=re^{i\theta}$,
\begin{gather*}
d\mu(r|q)=\sum_{k=0}^{\infty}x(xq;q)_{\infty}\delta\big(x-q^{k}\big),
\end{gather*}
and
\begin{gather*}
\zeta_{n}(\alpha)=\frac{(q;q)_{\infty}q^{(\alpha+1)n}(q;q)_{n}}
{(q^{\alpha+1};q)_{\infty}(q^{\alpha+1};q)_{n}}.
\end{gather*}
\end{Theorem}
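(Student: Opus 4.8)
The plan is to realize the family $\{w_{m,n}^{(\beta)}(z_1,z_2|q)\}$ as a member of the general class $\{f_{m,n}(z_1,z_2;\beta)\}$ of Section~\ref{section2} and then quote Theorem~\ref{orthfmn}. Comparing the defining formula $w_{m,n}^{(\beta)}(z_1,z_2|q)=z_1^{m-n}p_n(z_1z_2;q^{\beta+m-n}|q)$ for $m\ge n$, together with its symmetric extension for $m<n$, against~\eqref{eqdeffmn}, one sees at once that $w_{m,n}^{(\beta)}=f_{m,n}(\,\cdot\,;\beta)$ for the choice $\phi_n(r;\alpha)=p_n(r;q^\alpha|q)$; the coefficients $c_j(n,\alpha)$ of~\eqref{eqphin} are then just the coefficients of this polynomial, which are real rational functions of $q^\alpha$ for fixed $q\in(0,1)$. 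So everything reduces to casting the known little $q$-Laguerre orthogonality in the normalized form demanded in Section~\ref{section2}, namely $\int_0^\infty\phi_m(r;\alpha)\phi_n(r;\alpha)\,r^\alpha\,d\mu(r)=\zeta_n(\alpha)\delta_{m,n}$ with a positive measure $\mu$ that does \emph{not} depend on $\alpha$.

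To do this I would substitute $x=q^k$ into the discrete orthogonality relation for $p_n(\,\cdot\,;q^\alpha|q)$ recorded just above the statement. The mass points $q^k$ ($k=0,1,\dots$) all lie in $(0,1]\subset(0,\infty)$, and the weight separates as
\begin{gather*}
q^{(\alpha+1)k}\big(q^{k+1};q\big)_\infty=x^{\alpha}\cdot x(qx;q)_\infty\qquad\text{at }x=q^k,
\end{gather*}
so the entire $\alpha$-dependence of the weight is the single factor $x^\alpha$. Setting
\begin{gather*}
d\mu(x|q)=\sum_{k=0}^\infty x(qx;q)_\infty\,\delta\big(x-q^k\big),
\end{gather*}
which is positive and independent of $\alpha$, the little $q$-Laguerre orthogonality becomes precisely $\int_0^\infty p_m(x;q^\alpha|q)p_n(x;q^\alpha|q)\,x^\alpha\,d\mu(x|q)=\zeta_n(\alpha)\delta_{m,n}$ with $\zeta_n(\alpha)=\dfrac{(q;q)_\infty q^{(\alpha+1)n}(q;q)_n}{(q^{\alpha+1};q)_\infty(q^{\alpha+1};q)_n}$. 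One also has to verify the mild finiteness hypothesis $\int_0^\infty x^j\,d\mu(x|q)<\infty$ for all $j$; since $\mu$ is supported in $(0,1]$, on which $(qx;q)_\infty$ is bounded, this is just the convergence of $\sum_{k\ge0}q^k$, immediate for $0<q<1$.

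With these identifications, I would apply Theorem~\ref{orthfmn} with $N=\infty$ and $\nu$ the uniform probability measure $d\theta/(2\pi)$ on $\mathbb{T}$ (which is of the required Fourier type). Since $d\mu(r^2;\beta)=r^{2\beta}\,d\mu(r^2)$, the theorem yields
\begin{gather*}
\int_{\mathbb{R}^2} w_{m,n}^{(\beta)}(z,\bar z|q)\,\overline{w_{s,t}^{(\beta)}(z,\bar z|q)}\,r^{2\beta}\,d\mu\big(r^2|q\big)\,d\theta=\zeta_{m\wedge n}\big(|m-n|+\beta\big)\,\delta_{m,s}\delta_{n,t}
\end{gather*}
for all nonnegative integers $m,n,s,t$, with $z=re^{i\theta}$ --- exactly the asserted identity (the normalizing $2\pi$ being folded into the convention for $d\theta$, as in Theorem~\ref{theorem:2d-q-laguerre-orthogonality}).

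There is no genuine obstacle here: the content is entirely in the one-line reduction of the classical little $q$-Laguerre orthogonality to the $\alpha$-free-measure normalization, and the rest is an application of Theorem~\ref{orthfmn}. The only point deserving care is checking that the weight genuinely splits as $x^\alpha$ times an $\alpha$-independent measure --- equivalently, that the $\alpha$-dependence lives in $q^{(\alpha+1)k}$ and not in $(q^{k+1};q)_\infty$. If one preferred a self-contained argument, one could instead imitate the computation of $I(m,n,a,b)$ in the alternate proof of Theorem~\ref{ortZmn}: integrate a monomial $z^a\bar z^b$ (with $a\le m$, $b\le n$, $m\ge n$) against $w_{m,n}^{(\beta)}$, use the $\theta$-integration to force $b+m=a+n$, and evaluate the remaining $q$-moment sum via the little $q$-Laguerre moments; this is more laborious, and invoking Theorem~\ref{orthfmn} is the clean route.
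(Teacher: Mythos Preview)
Your proposal is correct and follows exactly the route the paper takes: the paper simply records the little $q$-Laguerre orthogonality, observes that the weight factors as $x^{\alpha}$ times an $\alpha$-independent discrete measure on $\{q^{k}\}$, and then states Theorem~\ref{theorem:wall-1} as a direct consequence of the general construction (Theorem~\ref{orthfmn}). Your write-up supplies the verification of the hypotheses more explicitly than the paper does, but the argument is the same.
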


The function $y(x)=p_{n}(x;q^{\alpha}|q)$ satisf\/ies the $q$-dif\/ference equation
\begin{gather*}
q^{\alpha}y\big(q^{2}z\big)-\big(1+q^{\alpha}-zq^{1-n}\big)y(qz)+(1-qz)y(z)=0
\end{gather*}
or equivalently
\begin{gather*}
q^{\alpha+n-1}\theta_{q,z}^{2}y(z)+\frac{q^{n-1}+q^{\alpha+n-1}-z}{1-q}\theta_{q,z}y(z)
+\frac{z(1-q^{n})}{(1-q)^{2}}y(z)=0,
\end{gather*}
where $\theta_{q,z}=zD_{q,z}$. This leads to the following theorem.
 \begin{Theorem}
\label{theorem:wall-2}For $\beta>-1$, $m\ge n$ the polynomial
$w_{m,n}^{(\beta)}(z_{1},z_{2}|q)$,
satisfies the $q$-partial diffe\-ren\-ce equations
\begin{gather*}
q^{\beta+m-1}\theta_{q,z_{2}}^{2}f+\frac{q^{n-1}-q^{\beta+m-1}-z_{1}z_{2}}{1-q}\theta_{q,z_{2}}f+
\frac{(1-q^{n})z_{1}z_{2}}{(1-q)^{2}}f=0, 
\end{gather*}
and
\begin{gather*}
q^{n}\theta_{q,z_{1}}^{2}f-\frac{q^{n}-q^{\beta+m}+qz_{1}z_{2}}{1-q}\theta_{q,z_{1}}f
+ \frac{q(1-q^{\beta+m})z_{1}z_{2}}{(1-q)^{2}}f=0.
\end{gather*}
\end{Theorem}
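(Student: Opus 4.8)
The plan is to obtain both $q$-partial difference equations as immediate consequences of Theorem~\ref{theorem2.4}. The key observation is that $w_{m,n}^{(\beta)}(z_1,z_2|q)$ is precisely the polynomial $f_{m,n}(z_1,z_2;\beta)$ of Section~\ref{section2} associated with the radial family $\phi_n(x;\alpha)=p_n(x;q^\alpha|q)$, since for $m\ge n$ one has $w_{m,n}^{(\beta)}(z_1,z_2|q)=z_1^{m-n}p_n(z_1z_2;q^{\beta+m-n}|q)$. So first I would put the $q$-difference equation satisfied by $y(x)=p_n(x;q^\alpha|q)$ into the operator form
\[
A_n(x,\alpha)\,\theta_{q,x}^2y+B_n(x,\alpha)\,\theta_{q,x}y+C_n(x,\alpha)\,y=0,
\]
with $\theta_{q,x}=\delta_{q,x}=xD_{q,x}$ and
\[
A_n(x,\alpha)=q^{\alpha+n-1},\qquad B_n(x,\alpha)=\frac{q^{n-1}-q^{\alpha+n-1}-x}{1-q},\qquad C_n(x,\alpha)=\frac{x(1-q^n)}{(1-q)^2}.
\]
Here $\beta>-1$ and $m\ge n$ ensure $\alpha=m-n+\beta>-1$, so $p_n(\,\cdot\,;q^\alpha|q)$ is well defined and this is exactly the hypothesis of the $q$-difference part of Theorem~\ref{theorem2.4}.

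Next I would apply Theorem~\ref{theorem2.4} with $\alpha=m-n+\beta$. Its first conclusion, after replacing $x$ by $z_1z_2$ and writing $\theta_{q,z_2}$ for $\theta_{q,x}$, is
\[
q^{\alpha+n-1}\theta_{q,z_2}^2f+\frac{q^{n-1}-q^{\alpha+n-1}-z_1z_2}{1-q}\theta_{q,z_2}f+\frac{z_1z_2(1-q^n)}{(1-q)^2}f=0,
\]
and substituting $\alpha+n-1=\beta+m-1$ yields the first displayed equation of the theorem verbatim. For the second equation I would use the second conclusion of Theorem~\ref{theorem2.4},
\[
A_n\,\theta_{q,z_1}^2f+\bigl(q^\alpha B_n-2[\alpha]_qA_n\bigr)\theta_{q,z_1}f+\bigl([\alpha]_q^2A_n-[\alpha]_qq^\alpha B_n+q^{2\alpha}C_n\bigr)f=0,
\]
with $A_n,B_n,C_n$ evaluated at $x=z_1z_2$.

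The remaining work is a short computation with $q$-powers. Using $[\alpha]_q=(1-q^\alpha)/(1-q)$ one checks
\begin{gather*}
q^\alpha B_n-2[\alpha]_qA_n=\frac{-q^{\alpha+n-1}+q^{2\alpha+n-1}-q^\alpha z_1z_2}{1-q},\\
[\alpha]_q^2A_n-[\alpha]_qq^\alpha B_n+q^{2\alpha}C_n=\frac{q^\alpha z_1z_2\bigl(1-q^{\alpha+n}\bigr)}{(1-q)^2},
\end{gather*}
the second identity by telescoping the three contributions $q^{\alpha+n-1}(1-q^\alpha)^2$, $-(1-q^\alpha)\bigl(q^{\alpha+n-1}-q^{2\alpha+n-1}-q^\alpha z_1z_2\bigr)$ and $q^{2\alpha}z_1z_2(1-q^n)$. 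Since the leading coefficient here is $A_n=q^{\beta+m-1}$ rather than the $q^n$ appearing in the statement, I would finally multiply the whole equation through by $q^{\,n-m-\beta+1}$; invoking $\alpha+n-1=\beta+m-1$ and $\alpha=m-n+\beta$, the three coefficients become $q^n$, $-\dfrac{q^n-q^{\beta+m}+qz_1z_2}{1-q}$ and $\dfrac{q(1-q^{\beta+m})z_1z_2}{(1-q)^2}$, which is precisely the second displayed equation. I expect the only mildly delicate step to be the telescoping simplification of the constant coefficient; everything else is substitution. One could instead verify both equations termwise from the explicit series for $w_{m,n}^{(\beta)}(z_1,z_2|q)$, but that route is strictly more laborious.
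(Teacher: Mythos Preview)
Your proposal is correct and follows exactly the route the paper intends: the paper's ``proof'' consists solely of displaying the $q$-difference equation for $p_n(x;q^\alpha|q)$ and the phrase ``This leads to the following theorem,'' i.e.\ an implicit appeal to Theorem~\ref{theorem2.4}, which you carry out in full. Note that the paper's displayed coefficient $\frac{q^{n-1}+q^{\alpha+n-1}-z}{1-q}$ carries the wrong sign on $q^{\alpha+n-1}$ (a typo); your choice $B_n=\frac{q^{n-1}-q^{\alpha+n-1}-x}{1-q}$ is the one consistent with the theorem statement, and your subsequent simplifications for the $\theta_{q,z_1}$ equation are accurate.
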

From
\begin{gather}
D_{q,x}p_{n}(x;q^{\alpha}|q)=-\frac{q^{1-n}(1-q^{n})}{(1-q)(1-q^{\alpha+1})}p_{n-1}\big(x;q^{\alpha+1}|q\big),\nonumber
\\
\frac{D_{q^{-1},x}\left\{ w(x;q^{\alpha})p_{n}(x;q^{\alpha}|q)\right\} }{w(x;\alpha-1)}=\frac{(1-q^{\alpha})p_{n+1}(x;q^{\alpha-1}|q)}{q^{\alpha-1}(1-q)}\label{eq:wall-14}
\end{gather}
 and
\begin{gather*}
w(x;\alpha|q)p_{n}(x;q^{\alpha}|q)=\frac{q^{n\alpha+\binom{n}{2}}(1-q)^{n}}{(q^{\alpha+1};q)_{n}}D_{q^{-1},x}^{n}w(x;\alpha+n|q),
\end{gather*}
where
\begin{gather*}
w(x;\alpha)=(qx;q)_{\infty}x^{\alpha}
\end{gather*}
 we get the following relations:
\begin{Theorem}
For $\beta>-1$ and $m\ge n$ we have
\begin{gather*}
\frac{D_{q,z_{1}}\big\{ z_{1}^{n-m}w_{m,n}^{(\beta)}(z_{1},z_{2}|q)\big\} }{z_{1}^{n-m}}=-\frac{z_{2}}{z_{1}}\frac{q^{1-n}(1-q^{n})w_{m,n-1}^{(\beta)}(z_{1},z_{2}|q)}{(1-q)(1-q^{\beta+m-n+1})},
\\
D_{q,z_{2}}\big\{ w_{m,n}^{(\beta)}(z_{1},z_{2}|q)\big\} =-\frac{q^{1-n}(1-q^{n})w_{m,n-1}^{(\beta)}(z_{1},z_{2}|q)}{(1-q)(1-q^{\beta+m-n+1})}
\end{gather*}
and
\begin{gather}
 w(z_{1}z_{2};\beta|q)w_{m,n}^{(\beta)}(z_{1},z_{2}|q)
 =\frac{q^{m(n-1)+n(\beta-1)-\binom{n}{2}}(1-q)^{m+n}}{(-1)^{m}(q^{\beta+m-n+1};q)_{n}}\nonumber\\
\hphantom{w(z_{1}z_{2};\beta|q)w_{m,n}^{(\beta)}(z_{1},z_{2}|q)=}{}
 \times D_{q^{-1},z_{1}}^{n}\big\{ (z_{1}z_{2})^{\beta}D_{q^{-1},z_{2}}^{m}\big\{ (z_{1}z_{2})^{-\beta}w(z_{1}z_{2};\beta|q)\big\} \big\} .
\label{eq:wall-19}
\end{gather}
For $m>n$ we have
\begin{gather}
\frac{D_{q^{-1},z_{1}}\big\{ w(z_{1}z_{2};\beta|q)w_{m,n}^{(\beta)}(z_{1},z_{2}|q)\big\} }
{w(z_{1}z_{2};\beta|q)}=\frac{(1-q^{\beta+m-n})w_{m,n+1}^{(\beta)}(z_{1},z_{2}|q)}
{q^{\beta+m-n-1}(1-q)},\label{eq:wall-20}
\\
\frac{D_{q^{-1},z_{2}}\big\{ z_{2}^{m-n}w(z_{1}z_{2};\beta|q)w_{m,n}^{(\beta)}(z_{1},z_{2}|q)\big\} }
{z_{2}^{m-n}w(z_{1}z_{2};\beta|q)}=\frac{z_{1}}{z_{2}}\frac{(1-q^{\beta+m-n})w_{m,n+1}^{(\beta)}
(z_{1},z_{2}|q)}{q^{\beta+m-n-1}(1-q)},
\label{eq:wall-21}
\\
 \frac{z_{2}D_{q^{-1},z_{2}}\big\{ w(z_{1}z_{2};\beta|q)w_{m,n}^{(\beta)}(z_{1},z_{2}|q)\big\} }{w_{m,n}^{(\beta)}(z_{1},z_{2}|q)}\nonumber\\
\qquad{}
=\frac{1-q^{\beta}}{1-q}w_{m,n}^{(\beta)}(z_{1},z_{2}|q)-\frac{q^{\beta+1-n}}{1-q}\frac{1-q^{\beta+m+1}}{1-q^{\beta+m-n+1}}z_{2}w_{m+1,n}^{(\beta)}(z_{1},z_{2}|q)
\label{eq:wall-22}
\end{gather}
\end{Theorem}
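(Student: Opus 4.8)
The plan is to reduce every one of these identities to a known one-variable statement about the little $q$-Laguerre (Wall) polynomials, using the defining relation $w_{m,n}^{(\beta)}(z_1,z_2|q)=z_1^{m-n}p_n(z_1z_2;q^{\beta+m-n}|q)$ for $m\ge n$, exactly as in the $q$-Laguerre case treated above. Two elementary facts do the work. First, the $q$-chain rules: for $p=q$ or $p=q^{-1}$ and any $g$, one has $D_{p,z_2}^{k}[g(z_1z_2)]=z_1^{k}(D_{p,x}^{k}g)(z_1z_2)$ and $D_{p,z_1}^{k}[g(z_1z_2)]=z_2^{k}(D_{p,x}^{k}g)(z_1z_2)$. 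Second, the homogeneity $w(z_1z_2;\beta+k|q)=(z_1z_2)^{k}w(z_1z_2;\beta|q)$, which lets one freely transfer powers of $z_1$ to powers of $z_2$ (and back) and hence recognize $w_{m,n}^{(\beta)}$, $w_{m\pm1,n}^{(\beta)}$, $w_{m,n\pm1}^{(\beta)}$ from their definitions. For the first two displayed formulas I would apply $D_{q,z_1}$, resp.\ $D_{q,z_2}$, to $z_1^{m-n}p_n(z_1z_2;q^{\beta+m-n}|q)$, insert the first displayed $q$-derivative formula for $p_n$ (the one giving $D_{q,x}p_n(x;q^{\alpha}|q)$ in terms of $p_{n-1}(x;q^{\alpha+1}|q)$) with $\alpha=\beta+m-n$, and then recognize $z_1^{m-n+1}p_{n-1}(z_1z_2;q^{\beta+m-n+1}|q)=w_{m,n-1}^{(\beta)}(z_1,z_2|q)$, keeping track of the lone extra factor $z_2/z_1$ that arises in the $z_1$-version.

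For the Rodrigues-type formula \eqref{eq:wall-19} I would argue in three steps. (i) Since $(z_1z_2)^{-\beta}w(z_1z_2;\beta|q)=(qz_1z_2;q)_\infty$ and, because $(x;q)_\infty=(1-x)(qx;q)_\infty$, one has $D_{q^{-1},x}(qx;q)_\infty=\frac{q}{q-1}(qx;q)_\infty$, iterating the chain rule gives $D_{q^{-1},z_2}^{m}\big\{(z_1z_2)^{-\beta}w(z_1z_2;\beta|q)\big\}=z_1^{m}\big(\frac{q}{q-1}\big)^{m}(qz_1z_2;q)_\infty$; multiplying by $(z_1z_2)^{\beta}$ and using homogeneity rewrites this as $\big(\frac{q}{q-1}\big)^{m}z_2^{-m}\,w(z_1z_2;\beta+m|q)$. (ii) Applying $D_{q^{-1},z_1}^{n}$, pulling out the constant $z_2^{-m}$, and using the chain rule together with the one-variable Rodrigues formula $w(x;\beta+m-n|q)\,p_n(x;q^{\beta+m-n}|q)=\frac{q^{n(\beta+m-n)+\binom{n}{2}}(1-q)^{n}}{(q^{\beta+m-n+1};q)_n}\,D_{q^{-1},x}^{n}w(x;\beta+m|q)$ (the displayed Wall Rodrigues formula with $\alpha=\beta+m-n$) turns the right-hand side into a multiple of $w(z_1z_2;\beta+m-n|q)\,p_n(z_1z_2;q^{\beta+m-n}|q)$. (iii) Using $w(z_1z_2;\beta+m-n|q)=(z_1z_2)^{m-n}w(z_1z_2;\beta|q)$ and $p_n(z_1z_2;q^{\beta+m-n}|q)=z_1^{n-m}w_{m,n}^{(\beta)}(z_1,z_2|q)$, all stray powers of $z_1$ and $z_2$ cancel, leaving $w(z_1z_2;\beta|q)\,w_{m,n}^{(\beta)}(z_1,z_2|q)$ on the right, while the accumulated prefactor becomes the one in \eqref{eq:wall-19} after the arithmetical identity $n(\beta+m-n)+\binom{n}{2}-m=m(n-1)+n(\beta-1)-\binom{n}{2}$. (Setting $n=0$ in \eqref{eq:wall-19}, where $w_{m,0}^{(\beta)}=z_1^{m}$, provides a convenient check that the constants collapse correctly.)

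Formulas \eqref{eq:wall-20} and \eqref{eq:wall-21} follow the same recipe, now anchored to \eqref{eq:wall-14}. Writing $z_2^{m-n}w(z_1z_2;\beta|q)\,w_{m,n}^{(\beta)}(z_1,z_2|q)=w(z_1z_2;\beta+m-n|q)\,p_n(z_1z_2;q^{\beta+m-n}|q)$, I would apply $D_{q^{-1},z_2}$ directly to get \eqref{eq:wall-21}, and---after re-writing the same quantity as $z_2^{n-m}w(z_1z_2;\beta+m-n|q)\,p_n(z_1z_2;q^{\beta+m-n}|q)$---apply $D_{q^{-1},z_1}$ to get \eqref{eq:wall-20}; in both cases \eqref{eq:wall-14} with $\alpha=\beta+m-n$ produces $p_{n+1}(z_1z_2;q^{\beta+m-n-1}|q)=z_1^{n+1-m}w_{m,n+1}^{(\beta)}(z_1,z_2|q)$, and homogeneity cancels the leftover powers of $z_1,z_2$ to leave exactly the stated right-hand sides (this needs $m>n$, so that $w_{m,n+1}^{(\beta)}$ is of the type $m\ge n$ to which the definition applies). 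Finally, \eqref{eq:wall-22} is obtained by the same manipulation that gave \eqref{eq:qlaguerre-22} in the $q$-Laguerre case: starting from \eqref{eq:wall-19}, commute $D_{q^{-1},z_2}$ through $D_{q^{-1},z_1}^{n}$ and apply the $q$-Leibniz rule to $D_{q^{-1},z_2}\big\{(z_1z_2)^{\beta}\,D_{q^{-1},z_2}^{m}\{(z_1z_2)^{-\beta}w(z_1z_2;\beta|q)\}\big\}$, splitting it into a multiple of $(z_1z_2)^{\beta}D_{q^{-1},z_2}^{m+1}\{(z_1z_2)^{-\beta}w(z_1z_2;\beta|q)\}$---which by \eqref{eq:wall-19} with $m\mapsto m+1$ is a multiple of $w(z_1z_2;\beta|q)\,w_{m+1,n}^{(\beta)}$---plus a multiple of $D_{q^{-1},z_2}^{m}\{(z_1z_2)^{-\beta}w(z_1z_2;\beta|q)\}$---which by \eqref{eq:wall-19} gives back a multiple of $w(z_1z_2;\beta|q)\,w_{m,n}^{(\beta)}$---and then collecting coefficients, with the help of $\frac{(q^{\beta+m-n+2};q)_n}{(q^{\beta+m-n+1};q)_n}=\frac{1-q^{\beta+m+1}}{1-q^{\beta+m-n+1}}$.

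The only real difficulty I foresee is purely the bookkeeping of powers of $q$: in particular ensuring that the prefactor $q^{m(n-1)+n(\beta-1)-\binom{n}{2}}$ in \eqref{eq:wall-19} comes out correctly once the one-variable Rodrigues constant is re-expressed through the $x\leftrightarrow z_1z_2$ substitution, and keeping the $q^{-1}$-shift straight in the Leibniz step for \eqref{eq:wall-22}---the single place where this case genuinely differs from the $q$-Laguerre computation, since the operative Rodrigues operator here is $D_{q^{-1}}$ rather than $D_q$.
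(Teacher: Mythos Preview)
Your proposal is correct and follows essentially the same route as the paper: both reduce every identity to the corresponding one-variable little $q$-Laguerre formula via the defining relation $w_{m,n}^{(\beta)}(z_1,z_2|q)=z_1^{m-n}p_n(z_1z_2;q^{\beta+m-n}|q)$ together with the chain rules $D_{p,z_j}g(z_1z_2)=z_{3-j}(D_{p,x}g)(z_1z_2)$ and the homogeneity of $w(x;\alpha|q)$ in $\alpha$. In particular, your three-step derivation of \eqref{eq:wall-19} (compute the inner $D_{q^{-1},z_2}^m$ on $(qz_1z_2;q)_\infty$, then apply the one-variable Rodrigues formula to the outer $D_{q^{-1},z_1}^n$, then simplify the $q$-exponent via $n(\beta+m-n)+\binom{n}{2}-m=m(n-1)+n(\beta-1)-\binom{n}{2}$) is exactly what the paper does, and your treatment of \eqref{eq:wall-22} by commuting $D_{q^{-1},z_2}$ past $D_{q^{-1},z_1}^n$ and using the $q^{-1}$-Leibniz rule coincides with the paper's argument as well.
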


\begin{proof}
Observe that
\begin{gather*}
D_{q^{-1},z_{1}}^{n}=z_{2}^{n}D_{q^{-1},z_{1}z_{2}}^{n}
\end{gather*}
and
\begin{gather*}
z_{1}^{m}w(z_{1}z_{2};\beta|q)=\big(1-q^{-1}\big)^{m}(z_{1}z_{2})^{\beta}D_{q^{-1},z_{2}}^{m}
\big\{ (z_{1}z_{2})^{-\beta}w(z_{1}z_{2};\beta|q)\big\} ,
\end{gather*}
 then
\begin{gather*}
w(z_{1}z_{2};\beta|q)w_{m,n}^{(\beta)}(z_{1},z_{2}|q) = \frac{q^{n(\beta+m)
- \binom{n+1}{2}}(1-q)^{n}}{(q^{\beta+m-n+1};q)_{n}} D_{q^{-1},z_{1}}^{n}\big(z_{1}^{m}w(z_{1}z_{2};\beta|q)\big)\\
\hphantom{w(z_{1}z_{2};\beta|q)w_{m,n}^{(\beta)}(z_{1},z_{2}|q)}{}
 = \frac{q^{m(n-1)+n(\beta-1)-\binom{n}{2}}(1-q)^{m+n}}{(-1)^{m}(q^{\beta+m-n+1};q)_{n}}\\
 \hphantom{w(z_{1}z_{2};\beta|q)w_{m,n}^{(\beta)}(z_{1},z_{2}|q)=}{}
 \times D_{q^{-1},z_{1}}^{n}\big\{ (z_{1}z_{2})^{\beta}D_{q^{-1},z_{2}}^{m}
 \big\{ (z_{1}z_{2})^{-\beta}w(z_{1}z_{2};\beta|q)\big\} \big\} ,
\\
w(z_{1}z_{2};\beta|q)w_{m,n}^{(\beta)}(z_{1},z_{2}|q)
=\frac{q^{m(n-1)+n(\beta-1)-\binom{n}{2}}(1-q)^{m+n}}{(-1)^{m}(q^{\beta+m-n+1};q)_{n}}\\
\hphantom{w(z_{1}z_{2};\beta|q)w_{m,n}^{(\beta)}(z_{1},z_{2}|q)=}{}
 \times D_{q^{-1},z_{1}}^{n}\big\{ (z_{1}z_{2})^{\beta}D_{q^{-1},z_{2}}^{m}
 \big\{ (z_{1}z_{2})^{-\beta}w(z_{1}z_{2};\beta|q)\big\} \big\} ,
\end{gather*}
which gives \eqref{eq:wall-19}, \eqref{eq:wall-20} and \eqref{eq:wall-21}
are obtained from \eqref{eq:wall-14} by direct computation.

From \eqref{eq:wall-19} we have
\begin{gather*}
 D_{q^{-1},z_{2}}\big\{ w(z_{1}z_{2};\beta|q)w_{m,n}^{(\beta)}(z_{1},z_{2}|q)\big\} =\frac{q^{m(n-1)+n(\beta-1)-\binom{n}{2}}(1-q)^{m+n}}{(-1)^{m}(q^{\beta+m-n+1};q)_{n}} \\
 \qquad\quad{}
\times
 D_{q^{-1},z_{1}}^{n}\big\{ D_{q^{-1},z_{2}}\big\{ (z_{1}z_{2})^{\beta}D_{q^{-1},z_{2}}^{m}
 \big\{ (z_{1}z_{2})^{-\beta}w(z_{1}z_{2};\beta|q)\big\} \big\} \big\} \\
\qquad{}
 =\frac{q^{m(n-1)+n(\beta-1)-\binom{n}{2}}(1-q)^{m+n}}{(-1)^{m}(q^{\beta+m-n+1};q)_{n}}
 q^{\beta}
 D_{q^{-1},z_{1}}^{n}\big\{ (z_{1}z_{2})^{\beta}D_{q^{-1},z_{2}}^{m+1}
 \big\{ (z_{1}z_{2})^{-\beta}w(z_{1}z_{2};\beta|q)\big\} \big\} \\
\qquad\quad{}
 +\frac{q^{m(n-1)+n(\beta-1)+1-\beta-\binom{n}{2}}(1-q)^{m+n-1}(1-q^{\beta})}{(-1)^{m}
 (q^{\beta+m-n+1};q)_{n}}\\
\qquad\quad{}
 \times z_{2}^{-1}D_{q^{-1},z_{1}}^{n}\big\{ (z_{1}z_{2})^{\beta}D_{q^{-1},z_{2}}^{m}
 \big\{ (z_{1}z_{2})^{-\beta}w(z_{1}z_{2};\beta|q)\big\} \big\} \\
\qquad{}
 =-\frac{q^{\beta+1-n}}{1-q}\frac{1-q^{\beta+m+1}}{1-q^{\beta+m-n+1}}w(z_{1}z_{2};\beta|q)
 w_{m+1,n}^{(\beta)}(z_{1},z_{2}|q)\\
\qquad\quad{}
 +\frac{(1-q^{\beta})}{z_{2}(1-q)}w(z_{1}z_{2};\beta|q)w_{m,n}^{(\beta)}(z_{1},z_{2}|q),
\end{gather*}
which gives \eqref{eq:wall-22}.
\end{proof}

Applying \eqref{eq3trrphi}--\eqref{eqcn} and \eqref{eq2.21} to
\begin{gather*}
c_{k}(n,\alpha)=\frac{(q;q)_{n}q^{\frac{(k-n)(n+k-1)}{2}}(-1)^{n-k}}{(q;q)_{k}(q,q^{\alpha+1};q)_{n-k}}
\end{gather*}
we get the following recurrences:
\begin{Theorem}
For $\beta>-1$, $m\ge n$ we have
\begin{gather*}
\frac{z_{2}w_{m+1,n}^{(\beta)}(z_{1},z_{2}|q)}{q^{n}(1-q^{m-n+\beta+1})}
 =w_{m,n}^{(\beta)}(z_{1},z_{2}|q)-w_{m+1,n+1}^{(\beta)}(z_{1},z_{2}|q),
\\
 z_{1}\big(1-q^{m-n+\beta+1}\big)w_{m,n}^{(\beta)}(z_{1},z_{2}|q)\\
 \qquad{}
 =\big(1-q^{m+\beta+1}\big)w_{m+1,n}^{(\beta)}(z_{1},z_{2}|q)-q^{m-n+\beta+1}(1-q^{n})
 w_{m,n-1}^{(\beta)}(z_{1},z_{2}|q),
\end{gather*}
 and
\begin{gather*}
 \big(q^{n}+q^{m+\beta}\big(1-q^{n}-q^{n+1}\big)-z_{1}z_{2}\big)
w_{m,n}^{(\beta)}(z_{1},z_{2}|q)\\
\qquad{} =q^{n}\big(1-q^{m+\beta+1}\big)w_{m+1,n+1}^{(\beta)}(z_{1},z_{2}|q)+q^{m+\beta}(1-q^{n})
 w_{m-1,n-1}^{(\beta)}(z_{1},z_{2}|q).
\end{gather*}
\end{Theorem}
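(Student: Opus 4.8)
The plan is to specialise the general three-term relations of Section~\ref{section2} to the coefficient sequence $c_k(n,\alpha)$ displayed just above the statement, i.e.\ to $\phi_n(x;\alpha)=p_n(x;q^\alpha|q)$, and then to put $\alpha=m-n+\beta$ as dictated by \eqref{eqdeffmn}. The three displayed identities will arise, in order, from the Christoffel-type relation \eqref{eq3trrG2D}, from \eqref{eqrecrel2}, and from the transplanted three-term recurrence \eqref{eqrecrel2-1-2} (which comes from \eqref{eq3trrphi}), the auxiliary quantities being computed from \eqref{eq:eqan}--\eqref{eqcn}.

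First I would record the handful of explicit coefficient values that enter. Cancelling $(q;q)$-factorials in $c_k(n,\alpha)$ gives $c_0(n,\alpha)=(-1)^nq^{-\binom{n}{2}}/(q^{\alpha+1};q)_n$, $c_1(n,\alpha)=(-1)^{n-1}(1-q^n)q^{-\binom{n}{2}}/[(1-q)(q^{\alpha+1};q)_{n-1}]$ and $c_2(n,\alpha)=(-1)^n(1-q^{n-1})(1-q^n)q^{1-\binom{n}{2}}/[(1-q)(1-q^2)(q^{\alpha+1};q)_{n-2}]$, while $c_n(n,\alpha)=c_{n+1}(n+1,\alpha)=1$ since these are the constant terms $p_n(0;q^\alpha|q)=1$. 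Every ratio that then occurs is a telescoping product of $q$-shifted factorials; in particular \eqref{eq:eqan} yields $a_n(\alpha)=-q^n(1-q^{\alpha+n+1})$, and \eqref{eqcn}, after a small cancellation, yields $c_n(\alpha)=q^n+q^{\alpha+n}(1-q^n-q^{n+1})$.

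The first identity I would obtain by feeding these into \eqref{eq3trrG2D}: because $c_n(n,\alpha)=c_{n+1}(n+1,\alpha)=1$, the two coefficients there are equal, both reducing to $c_0(n,m-n+1+\beta)/c_0(n+1,m-n+\beta)=-q^n(1-q^{m-n+\beta+1})$, so dividing \eqref{eq3trrG2D} by $q^n(1-q^{m-n+\beta+1})$ gives the stated relation at once. For the second, \eqref{eqrecrel2} calls for $c_0(n,m-n+\beta)/c_0(n,m-n+1+\beta)=(1-q^{m+\beta+1})/(1-q^{m-n+\beta+1})$ and for the coefficient $u_{m,n}$, which another telescoping computation reduces to $u_{m,n}=-(1-q^n)q^{m-n+\beta+1}/(1-q^{m-n+\beta+1})$; clearing the common denominator $1-q^{m-n+\beta+1}$ produces the second identity. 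For the third, I would simply substitute $a_n$, $b_n$, $c_n$ at $\alpha=m-n+\beta$ into \eqref{eqrecrel2-1-2} and multiply by $-1$.

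The step requiring genuine care is the evaluation of $b_n(\alpha)$ from \eqref{eqbn}, the only place where $c_0$, $c_1$, $c_2$ at levels $n$ and $n+1$ all appear together. Carried out, each of the two fractions in \eqref{eqbn} becomes $(1-q^n)q^{1-n}/[(1-q)(1-q^2)]$ times a Laurent polynomial in $q^\alpha$, and in the difference everything cancels except a single power, via the identity $-1+q+q^2-q^3=-(1-q)(1-q^2)$, leaving the clean value $b_n(\alpha)=-q^{\alpha+n}(1-q^n)$, i.e.\ $b_n(m-n+\beta)=-q^{m+\beta}(1-q^n)$. Keeping track of the exponents $-\binom{n}{2}$, $-\binom{n\pm1}{2}$ and of the three factorials $(q^{\alpha+1};q)_{n-2}$, $(q^{\alpha+1};q)_{n-1}$, $(q^{\alpha+1};q)_n$ is the main obstacle; the rest of the argument is bookkeeping.
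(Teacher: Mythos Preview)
Your proposal is correct and is exactly the approach the paper indicates: specialise the general identities \eqref{eq3trrG2D}, \eqref{eqrecrel2} and \eqref{eqrecrel2-1-2} (via \eqref{eq:eqan}--\eqref{eqcn}) to the Wall coefficient sequence $c_k(n,\alpha)$ and set $\alpha=m-n+\beta$. Your computed values $a_n(\alpha)=-q^n(1-q^{\alpha+n+1})$, $b_n(\alpha)=-q^{\alpha+n}(1-q^n)$, $c_n(\alpha)=q^n+q^{\alpha+n}(1-q^n-q^{n+1})$ and $u_{m,n}=-q^{\alpha+1}(1-q^n)/(1-q^{\alpha+1})$ are all right, and they agree with the standard three-term recurrence for the little $q$-Laguerre polynomials, which provides an independent check of the delicate $b_n$ calculation.
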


 \section[The polynomials $\big\{M_n^{(\alpha,\beta)}(z_1,z_2|q)\big\}$]{The polynomials $\boldsymbol{\big\{M_n^{(\alpha,\beta)}(z_1,z_2|q)\big\}}$}\label{section6}

For $\alpha,\gamma>-1$, the Little $q$-Jacobi polynomials
\begin{gather*}
p_{n}\big(x;q^{\alpha},q^{\gamma}|q\big) =\frac{(q;q)_{n}}{q^{\binom{n}{2}}}
\sum_{k=0}^{n}\frac{(q^{\alpha+\gamma+n+1};q)_{n-k}q^{\binom{k}{2}}(-x)^{n-k}}{(q;q)_{k}
(q,q^{\alpha+1};q)_{n-k}}
\end{gather*}
satisfy the orthogonality relation \cite{Koe:Swa}
\begin{gather*}
 \sum_{k=0}^{\infty}\frac{(q^{k+1};q)_{\infty}q^{k(\alpha+1)}}
{(q^{\gamma+k+1};q)_{\infty}}p_{m}(q^{k};q^{\alpha},q^{\gamma}|q)
p_{n}\big(q^{k};q^{\alpha},q^{\gamma}|q\big)\\
\qquad{} =\frac{(q,q^{\alpha+\gamma+n+1};q)_{\infty}(q;q)_{n}q^{n(\alpha+1)}\delta_{m,n}}
 {(q^{\alpha+1},q^{\gamma+n+1};q)_{\infty}(q^{\alpha+1};q)_{n}(1-q^{\alpha+\gamma+2n+1})}.
\end{gather*}
This leads to the choice
\begin{gather}
d\mu(x;\gamma|q) =\sum_{k=0}^{\infty}\frac{(qx;q)_{\infty}x}
 {(q^{\gamma+1}x;q)_{\infty}}\delta\big(x-q^{k}\big),\nonumber\\
\zeta_{n}(\alpha,\gamma|q) =\frac{(q,q^{\alpha+\gamma+n+1};q)_{\infty}(q;q)_{n}
q^{n(\alpha+1)}}{(q^{\alpha+1},q^{\gamma+n+1};q)_{\infty}(q^{\alpha+1};q)_{n}
(1-q^{\alpha+\gamma+2n+1})}.
\label{eq:qjacobi-3}
\end{gather}
Following our general construction we def\/ine the polynomials
\begin{gather*}
M_{m,n}^{(\beta,\gamma)}(z_{1},z_{2}|q)=\begin{cases}
z_{1}^{m-n}p_{n}\big(z_{1}z_{2},q^{\beta+m-n},q^{\gamma}|q\big), & m\ge n,\\
M_{n,m}^{(\beta,\gamma)}(z_{2},z_{1}|q), & m<n.
\end{cases}
\end{gather*}
Then we have the explicit form
\begin{gather*}
 M_{m,n}^{(\beta,\gamma)}(z_{1},z_{2}|q) =\frac{(q;q)_{n}(q^{\beta+1};q)_{m-n}}
{(q^{\beta+\gamma+1};q)_{m}q^{\binom{n}{2}}}\sum_{k=0}^{n}\frac{(q^{\beta+\gamma+1};q)_{m+n-k}q^{\binom{k}{2}}z_{1}^{m-k}
 (-z_{2})^{n-k}}{(q;q)_{k}(q;q)_{n-k}(q^{\beta+1};q)_{m-k}},
\\
z_{1}M_{m,n}^{(\beta+1,\gamma)}(z_{1},z_{2}|q)=M_{m+1,n}^{(\beta,\gamma)}(z_{1},z_{2}|q)
\end{gather*}
for $m\ge n$. Applying Theorem~\ref{orthfmn} we obtain:
\begin{Theorem}
\label{theorem:qjacobi-1} For $m,n,s,t=0,1,\dots$ and $\beta,\gamma>-1$
we have the following orthogonality relation
\begin{gather*}
 \int_{\mathbb{R}^{2}}M_{m,n}^{(\beta,\gamma)}(z,\bar{z}|q)
\overline{M_{s,t}^{(\beta,\gamma)}(z,\bar{z}|q)}r^{2\beta}d\mu\big(r^{2};\gamma|q\big)d\theta
 =\zeta_{m\wedge n}(\left|m-n\right|+\beta,\gamma|q)\delta_{m,s}\delta_{n,t},
\end{gather*}
where $d\mu(x;\gamma|q)$, $\zeta_{n}(\alpha,\gamma|q)$ are given in~\eqref{eq:qjacobi-3}.
\end{Theorem}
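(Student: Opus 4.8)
The plan is to recognize $\{M_{m,n}^{(\beta,\gamma)}(z_1,z_2|q)\}$ as a special case of the general family $\{f_{m,n}(z_1,z_2;\beta)\}$ built in Section~\ref{section2} and then invoke Theorem~\ref{orthfmn} directly. The only ingredient that has to be supplied by hand is a correctly normalized one-dimensional orthogonality relation for the little $q$-Jacobi polynomials, phrased so that the underlying measure is independent of the running parameter.

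First I would rewrite the stated little $q$-Jacobi orthogonality by putting $x=q^k$ and moving one factor $q^k=x$ out of the weight and into the mass, so that it reads
\begin{gather*}
\int_0^\infty p_m\big(x;q^\alpha,q^\gamma|q\big)\,p_n\big(x;q^\alpha,q^\gamma|q\big)\,x^\alpha\,d\mu(x;\gamma|q)=\zeta_n(\alpha,\gamma|q)\,\delta_{m,n},
\end{gather*}
with $d\mu(x;\gamma|q)$ and $\zeta_n(\alpha,\gamma|q)$ exactly those displayed in~\eqref{eq:qjacobi-3}. The point to stress is that the Borel measure $d\mu(\cdot;\gamma|q)$ depends only on the fixed parameter $\gamma$ and not on $\alpha$; all of the $\alpha$-dependence sits in the factor $x^\alpha$ and in the constant $\zeta_n(\alpha,\gamma|q)$. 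For $\gamma>-1$ this measure is positive (a finite discrete measure supported on $\{q^k:k\ge0\}$), so the hypotheses of Section~\ref{section2} are met with $\phi_n(r;\alpha):=p_n(r;q^\alpha,q^\gamma|q)$, $\gamma$ playing the role of an external parameter.

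Next I would match the definitions: for $m\ge n$ the formula $M_{m,n}^{(\beta,\gamma)}(z_1,z_2|q)=z_1^{m-n}p_n\big(z_1z_2;q^{\beta+m-n},q^\gamma|q\big)$ is precisely $f_{m,n}(z_1,z_2;\beta)=z_1^{m-n}\phi_n(z_1z_2;m-n+\beta)$ from~\eqref{eqdeffmn}, while the convention for $m<n$ agrees with~\eqref{eq:symmetry-condition}. Taking $N=\infty$ and the Fourier-type circle measure $d\nu(\theta)=d\theta/(2\pi)$, Theorem~\ref{orthfmn} then delivers exactly the asserted relation, in which $d\mu(r^2;\beta)$ unpacks to $r^{2\beta}\,d\mu(r^2;\gamma|q)$ and the right-hand constant is $\zeta_{m\wedge n}(|m-n|+\beta,\gamma|q)$, since the $\zeta_n(\alpha)$ of Section~\ref{section2} is $\zeta_n(\alpha,\gamma|q)$ here.

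I do not expect a genuine obstacle; the only item needing a word of care is the parameter range. Theorem~\ref{orthfmn} is stated under $\alpha\ge0$, whereas the exponent that actually occurs is $\alpha=|m-n|+\beta$, which for $\beta>-1$ lies in $(-1,\infty)$ — it equals $\beta$ when $m=n$ and is at least $1+\beta$ otherwise. This is harmless, because the proof of Theorem~\ref{orthfmn} uses only the one-dimensional orthogonality relation, and the little $q$-Jacobi polynomials are orthogonal with respect to $d\mu(\cdot;\gamma|q)$ for all $\alpha,\gamma>-1$. Thus the whole argument collapses to the bookkeeping already carried out in the proof of Theorem~\ref{orthfmn}: the $\theta$-integration forces $m-n=s-t$, and the leftover radial integral is, after the substitution $r^2\mapsto x$, precisely the one-dimensional little $q$-Jacobi orthogonality just recorded.
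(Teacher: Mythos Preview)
Your proposal is correct and follows exactly the paper's approach: the paper simply says ``Applying Theorem~\ref{orthfmn} we obtain'' the result, and your plan is a careful unpacking of precisely that application, with the little $q$-Jacobi orthogonality supplying the one-dimensional input and the circle measure $d\theta/(2\pi)$ playing the role of $d\nu$. Your remark about the parameter range $\alpha=|m-n|+\beta>-1$ is a sensible bit of extra care that the paper itself leaves implicit.
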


Next we apply formulas \eqref{eq3trrphi}--\eqref{eqcn} and \eqref{eq2.21} to
\begin{gather*}
c_{k}(n,\alpha)=\frac{(q;q)_{n}(q^{\alpha+\gamma+n+1};q)_{n-k}(-1)^{n-k}}{(q;q)_{k}
(q,q^{\alpha+1};q)_{n-k}q^{\binom{n}{2}-\binom{k}{2}}},
\end{gather*}
and obtain the following recurrences:

\begin{Theorem}
\label{theorem:qjacobi-2} For $\beta,\gamma>-1$ and $m\ge n$ we have
\begin{gather*}
 \frac{(1-q^{\beta+\gamma+m+n+2})}{q^{n}(1-q^{\beta+m-n+1})}z_{2}
M_{m+1,n}^{(\beta,\gamma)}(z_{1},z_{2}|q)
 =-M_{m+1,n+1}^{(\beta,\gamma)}(z_{1},z_{2}|q)+M_{m,n}^{(\beta,\gamma)}(z_{1},z_{2}|q),
\\
 z_{1}M_{m,n}^{(\beta,\gamma)}(z_{1},z_{2}|q)
 =\frac{(1-q^{\beta+m+1})(1-q^{\beta+\gamma+m+1})}{(1-q^{\beta+m-n+1})(1-q^{\beta+\gamma
 + m+n+1})}M_{m+1,n}^{(\beta,\gamma)}(z_{1},z_{2}|q)\\
 \hphantom{z_{1}M_{m,n}^{(\beta,\gamma)}(z_{1},z_{2}|q) =}{}
 +\frac{q^{m-n+\beta+1}(1-q^{n})(1-q^{\gamma+n})}{(1-q^{\beta+m-n+1})(1-q^{\beta+\gamma+m
 +n+1})}M_{m,n-1}^{(\beta,\gamma)}(z_{1},z_{2}|q).
\end{gather*}
\end{Theorem}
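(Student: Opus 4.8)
The polynomials $\{M_{m,n}^{(\beta,\gamma)}(z_1,z_2|q)\}$ are nothing but the instance of the general family $\{f_{m,n}(z_1,z_2;\beta)\}$ of Section~\ref{section2} produced by the choice $\phi_n(x;\alpha)=p_{n}(x;q^{\alpha},q^{\gamma}|q)$; equivalently, the coefficients $c_k(n,\alpha)$ in~\eqref{eqphin} are exactly the ones written just above the theorem (one checks this against the explicit sum for $M_{m,n}^{(\beta,\gamma)}$ by putting $\alpha=m-n+\beta$ and using $(q^{\beta+1};q)_{m-k}=(q^{\beta+1};q)_{m-n}(q^{\beta+m-n+1};q)_{n-k}$). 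Consequently both recurrences in the theorem are specializations of recurrences already established in Section~\ref{section2}: the first is~\eqref{eq3trrG2D} of Theorem~\ref{rrfmn}, and the second is~\eqref{eqrecrel2}. So the whole task is to evaluate the finitely many ratios of $c_k(n,\alpha)$ that appear in those two formulas, and then set $\alpha=m-n+\beta$.

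The computation rests on three elementary evaluations read off from the closed form of $c_k(n,\alpha)$: one has $c_0(n,\alpha)=(-1)^n(q^{\alpha+\gamma+n+1};q)_n/(q^{\binom{n}{2}}(q^{\alpha+1};q)_n)$, the constant term is $c_n(n,\alpha)=1$ for every $n$, and $c_1(n,\alpha)/c_0(n,\alpha)=-(1-q^n)(1-q^{\alpha+n})/((1-q)(1-q^{\alpha+\gamma+2n}))$. Plugging $c_0$ and $c_n$ into~\eqref{eq3trrG2D}, whose coefficients are $c_0(n,\alpha+1)/c_0(n+1,\alpha)$ and $-c_0(n,\alpha+1)c_{n+1}(n+1,\alpha)/(c_0(n+1,\alpha)c_n(n,\alpha))$, and telescoping the $q$-shifted factorials, the first coefficient becomes $-q^n(1-q^{\alpha+1})/(1-q^{\alpha+\gamma+2n+2})$ and the second its negative; with $\alpha=m-n+\beta$ and the common denominator cleared this is precisely the first displayed recurrence. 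For the second recurrence, the coefficient of $M_{m+1,n}^{(\beta,\gamma)}$ in~\eqref{eqrecrel2} is $c_0(n,m-n+\beta)/c_0(n,m-n+1+\beta)$, which telescopes to $(1-q^{m+\beta+\gamma+1})(1-q^{m+\beta+1})/((1-q^{m+n+\beta+\gamma+1})(1-q^{m-n+\beta+1}))$, exactly the ratio in the theorem; the coefficient $u_{m,n}$ of $M_{m,n-1}^{(\beta,\gamma)}$ is best handled by first rewriting $u_{m,n}=\frac{c_0(n,\alpha)}{c_0(n-1,\alpha+1)}\Big(\frac{c_1(n,\alpha)}{c_0(n,\alpha)}-\frac{c_1(n,\alpha+1)}{c_0(n,\alpha+1)}\Big)$ and then using the evaluations above together with $c_0(n,\alpha)/c_0(n-1,\alpha+1)=-q^{1-n}(1-q^{\alpha+\gamma+2n})/(1-q^{\alpha+1})$.

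The only step here that is not pure $q$-Pochhammer bookkeeping is the collapse of the difference inside the last bracket: I would verify that $\frac{1-q^{\alpha+n}}{1-q^{\alpha+\gamma+2n}}-\frac{1-q^{\alpha+n+1}}{1-q^{\alpha+\gamma+2n+1}}=-\frac{(1-q)\,q^{\alpha+n}(1-q^{\gamma+n})}{(1-q^{\alpha+\gamma+2n})(1-q^{\alpha+\gamma+2n+1})}$, a one-line cross-multiplication in which the constant and top-degree terms of the numerator cancel and the factor $(1-q)(q^{\alpha+\gamma+2n}-q^{\alpha+n})=-(1-q)q^{\alpha+n}(1-q^{\gamma+n})$ survives. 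Feeding this back in, $u_{m,n}$ collapses (after $\alpha=m-n+\beta$) to the coefficient of $M_{m,n-1}^{(\beta,\gamma)}$ displayed in the theorem. I expect this identity, and more generally keeping straight the index shifts of the various $(q^a;q)_j$ that occur, to be the only genuine obstacle; the rest is routine specialization of Theorem~\ref{rrfmn} and of~\eqref{eqrecrel2}, carried out just as for the $q$-Laguerre polynomials in Section~\ref{section5}.
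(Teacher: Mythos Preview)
Your proposal is correct and follows essentially the same route as the paper. The paper's proof consists of the single sentence preceding the theorem, instructing one to substitute the displayed $c_k(n,\alpha)$ into the general recurrences of Section~\ref{section2}; you carry this out explicitly, invoking \eqref{eq3trrG2D} and \eqref{eqrecrel2} just as is done verbatim for the $q$-Laguerre case in Theorem~\ref{theorem:q-laguerre-recurrence}, and your intermediate evaluations of $c_0$, $c_n$, $c_1/c_0$ and the telescoping identity for the bracket are all accurate.
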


The polynomials $\{p_{n}(x;q^{\alpha},q^{\gamma}|q)\}$ satisfy the
following second-order dif\/ference equation
\begin{gather*}
 \big(1+q^{\alpha}-\big(q^{1-n}+q^{n+\alpha+\gamma+2}\big)x\big)y(xq) =q^{\alpha}(1-q^{\gamma+2}x)y\big(q^{2}x\big)+(1-qx)y(x)
\end{gather*}
 or
\begin{gather*}
 \big(1-q^{\gamma+2}x\big)\theta_{q,x}^{2}y(x)+\frac{q^{-\alpha}
- 1-q^{1-n-\alpha}x-q^{\gamma+2}\big(2-q^{n}\big)x}{1-q}\theta_{q,x}y(x)\\
\qquad{}
 -\frac{1+q^{1-\alpha}x-(q^{1-n-\alpha}+q^{\gamma+n+2})x}{(1-q)^{2}}y(x)=0,
\end{gather*}
where $y(x)=p_{n}(x;q^{\alpha},q^{\gamma}|q$), $\theta_{q,z}=zD_{q,z}$.
\begin{Theorem}
For $m\ge n$ and $\beta,\gamma>-1$, the function $f=M_{m,n}^{(\beta,\gamma)}(z_{1},z_{2})$
satisfies
\begin{gather*}
 \theta_{q,z_{2}}^{2}f+\left\{ \frac{q^{n-m-\beta}-1-q^{1-m-\beta}z_{1}z_{2}
-q^{\gamma+2}(2-q^{n})z_{1}z_{2}}{(1-q)(1-q^{\gamma+2}z_{1}z_{2})}\right\} \theta_{q,z_{2}}f\\
\qquad{} -\frac{1+(q^{n+1-m-\beta}-q^{1-m-\beta}-q^{\gamma+n+2})z_{1}z_{2}}{(1-q)^{2}
 (1-q^{\gamma+2}z_{1}z_{2})}f=0
\end{gather*}
and
\begin{gather*}
 \theta_{q,z_{1}}^{2}f+\frac{z_{1}z_{2}\left(2q^{\gamma+2}-q^{1-n}-q^{m+\beta
+ \gamma+2}\right)+q^{m-n+\beta}-1}{(1-q)(1-q^{\gamma+2}z_{1}z_{2})}\theta_{q,z_{1}}f\\
\qquad{} +\frac{\left(q^{1-n}-q^{1+m-n+\beta}+q^{\gamma+2}(q^{m+\beta}
 + q^{2(m-n+\beta)}-1)\right)z_{1}z_{2}-q^{2(m-n+\beta)}}{(1-q)^{2}(1-q^{\gamma+2}z_{1}z_{2})}f=0.
\end{gather*}
\end{Theorem}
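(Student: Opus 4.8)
The plan is to read both $q$-difference equations straight off Theorem~\ref{theorem2.4}. For $m\ge n$ we have $M_{m,n}^{(\beta,\gamma)}(z_1,z_2|q)=z_1^{m-n}\phi_n(z_1z_2;m-n+\beta)$ with $\phi_n(r;\alpha)=p_n(r;q^{\alpha},q^{\gamma}|q)$, so the $M_{m,n}^{(\beta,\gamma)}$ are precisely the polynomials $f_{m,n}$ of Section~\ref{section2} built from the little $q$-Jacobi family, and Theorem~\ref{theorem2.4} applies once the one-variable equation is put in the normalized shape $A_n(r,\alpha)\delta_{q,r}^2f+B_n(r,\alpha)\delta_{q,r}f+C_n(r,\alpha)f=0$ (recall $\delta_{q,r}=rD_{q,r}=\theta_{q,r}$). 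Reading off the displayed second-order $q$-difference equation for $y(x)=p_n(x;q^{\alpha},q^{\gamma}|q)$ that precedes the statement gives
\begin{gather*}
A_n(r,\alpha)=1-q^{\gamma+2}r,\qquad
B_n(r,\alpha)=\frac{q^{-\alpha}-1-q^{1-n-\alpha}r-q^{\gamma+2}(2-q^{n})r}{1-q},\\
C_n(r,\alpha)=-\frac{1+q^{1-\alpha}r-(q^{1-n-\alpha}+q^{\gamma+n+2})r}{(1-q)^2}.
\end{gather*}

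\textbf{First equation.} The first conclusion of Theorem~\ref{theorem2.4}, applied with $r=z_1z_2$ and $\alpha=m-n+\beta$, gives $A_n\theta_{q,z_2}^2f+B_n\theta_{q,z_2}f+C_nf=0$ with these coefficients evaluated at $r=z_1z_2$. Dividing through by $A_n=1-q^{\gamma+2}z_1z_2$ and writing $q^{-\alpha}=q^{n-m-\beta}$, $q^{1-\alpha}=q^{n+1-m-\beta}$, $q^{1-n-\alpha}=q^{1-m-\beta}$ turns this into exactly the first displayed equation. This step is pure bookkeeping.

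\textbf{Second equation and the main obstacle.} The second conclusion of Theorem~\ref{theorem2.4} gives the $z_1$-equation
\begin{gather*}
A_n\theta_{q,z_1}^2f+\big\{q^{\alpha}B_n-2[\alpha]_qA_n\big\}\theta_{q,z_1}f+\big\{[\alpha]_q^2A_n-[\alpha]_qq^{\alpha}B_n+q^{2\alpha}C_n\big\}f=0,
\end{gather*}
again at $r=z_1z_2$, $\alpha=m-n+\beta$, $[\alpha]_q=(1-q^{m-n+\beta})/(1-q)$. The remaining work is the $q$-algebraic simplification: expand the three pieces $[\alpha]_q^2A_n$, $-[\alpha]_qq^{\alpha}B_n$, $q^{2\alpha}C_n$, collect the constant and the $z_1z_2$ terms, and divide by $A_n=1-q^{\gamma+2}z_1z_2$. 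The first-order coefficient should collapse to $\dfrac{z_1z_2\big(2q^{\gamma+2}-q^{1-n}-q^{m+\beta+\gamma+2}\big)+q^{m-n+\beta}-1}{(1-q)(1-q^{\gamma+2}z_1z_2)}$ and the zeroth-order coefficient to the stated ratio with numerator $\big(q^{1-n}-q^{1+m-n+\beta}+q^{\gamma+2}(q^{m+\beta}+q^{2(m-n+\beta)}-1)\big)z_1z_2-q^{2(m-n+\beta)}$. I expect the only genuine difficulty to lie here: keeping track of the many exponentials $q^{a\pm\alpha}$, $q^{2\alpha}$ and, in particular, the cancellation of the bare $(1-q)^{-1}$ and $(1-q)^{-2}$ contributions against the $[\alpha]_q$ factors so that the coefficients come out with the single denominators shown. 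Everything else is an immediate consequence of Theorem~\ref{theorem2.4}.
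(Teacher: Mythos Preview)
Your approach is exactly the one the paper intends: the theorem is stated immediately after the one-variable $q$-difference equation for the little $q$-Jacobi polynomials, with no proof given, precisely because it is obtained by feeding the coefficients $A_n$, $B_n$, $C_n$ into the $q$-case of Theorem~\ref{theorem2.4} with $\alpha=m-n+\beta$, then dividing by $A_n=1-q^{\gamma+2}z_1z_2$. The only labor, as you correctly identify, is the straight bookkeeping in the $z_1$-equation when expanding $q^{\alpha}B_n-2[\alpha]_qA_n$ and $[\alpha]_q^{2}A_n-[\alpha]_qq^{\alpha}B_n+q^{2\alpha}C_n$; this is mechanical and requires no further ideas.
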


The polynomials $\{p_{n}(x;q^{\alpha},q^{\gamma}|q)\}$ satisfy the
following difference equations
\begin{gather}
 p_{n}\big(x;q^{\alpha},q^{\gamma}|q\big)-p_{n}\big(qx;q^{\alpha},q^{\gamma}|q\big) \nonumber\\
 \qquad{}
 =-\frac{q^{1-n}(1-q^{n})(1-q^{\alpha+\gamma+n-1})}{1-q^{\alpha+1}}xp_{n-1}\big(x;q^{\alpha
 +1},q^{\gamma+1}|q\big),
\label{eq:qjacobi-15}
\\
 D_{q^{-1},x}\big(w(x;q^{\alpha},q^{\gamma}|q)
p_{n}(x;q^{\alpha},q^{\gamma}|q\big)
 =\frac{(1-q^{\alpha})w(x;q^{\alpha-1},q^{\gamma-1}|q)}
 {q^{\alpha-1}(1-q)}p_{n+1}\big(x;q^{\alpha-1},q^{\gamma-1}|q\big),\nonumber
\end{gather}
 where
\begin{gather}
w\big(x;q^{\alpha},q^{\gamma}|q\big)=\frac{(qx;q)_{\infty}x^{\alpha}}{(q^{\gamma+1}x;q)_{\infty}}.
\label{eq:qjacobi-17}
\end{gather}

\begin{Theorem}
For $\beta,\gamma>-1$ we have
\begin{gather*}
 M_{m,n}^{(\beta,\gamma)}(z_{1},z_{2}|q) -
M_{m,n}^{(\beta,\gamma)}(z_{1},qz_{2}|q)
 =-\frac{q^{1-n}(1-q^{n})(1-q^{m+\beta+\gamma-1})}
 {1-q^{m-n+\beta}}z_{2}M_{m,n-1}^{(\beta,\gamma+1)}(z_{1},z_{2}|q)
\end{gather*}
for $m\ge n$, and
\begin{gather*}
 D_{q^{-1},z_{1}}\big(w(z_{1}z_{2};q^{\beta},q^{\gamma}|q)
M_{m,n}^{(\beta,\gamma)}(z_{1},z_{2}|q)\big)\\
 \qquad{} =\frac{(1-q^{m-n+\beta})w(z_{1}z_{2};q^{\beta},q^{\gamma-1}|q)}
 {q^{m-n+\beta}(1-q)}M_{m,n+1}^{(\beta,\gamma-1)}(z_{1},z_{2}|q),
\\
 z_{2}^{n-m}D_{q^{-1},z_{2}}
\big(z_{2}^{m-n}w(z_{1}z_{2};q^{\beta},q^{\gamma}|q)M_{m,n}^{(\beta,\gamma)}(z_{1},z_{2}|q)\big)
\\
\qquad{} =\frac{z_{1}(1-q^{m-n+\beta})w(z_{1}z_{2};q^{\beta},q^{\gamma-1}|q)}{z_{2}q^{m-n+\beta-1}(1-q)}
 M_{m,n+1}^{(\beta,\gamma-1)}(z_{1},z_{2}|q)
\end{gather*}
for $m\ge n+1$.
\end{Theorem}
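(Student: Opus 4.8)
The plan is to derive all three identities from the defining relation $M_{m,n}^{(\beta,\gamma)}(z_1,z_2|q)=z_1^{m-n}p_n(z_1z_2;q^{\beta+m-n},q^\gamma|q)$ (valid for $m\ge n$) together with the scalar $q$-difference relations for the little $q$-Jacobi polynomials recorded just above, namely \eqref{eq:qjacobi-15}, its companion lowering relation for $w(x;q^\alpha,q^\gamma|q)p_n(x;q^\alpha,q^\gamma|q)$, and the explicit form \eqref{eq:qjacobi-17} of the weight. The one technical device needed throughout is the elementary chain rule
\[
D_{q^{-1},z_1}\big[g(z_1z_2)\big]=z_2\,(D_{q^{-1},x}g)(z_1z_2),\qquad
D_{q^{-1},z_2}\big[g(z_1z_2)\big]=z_1\,(D_{q^{-1},x}g)(z_1z_2),
\]
which is immediate from the definition of $D_{q,z}$; once it is in place every step becomes a one-variable identity and the remaining work is bookkeeping of powers of $z_1$ and $z_2$.

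For the first identity, I would observe that replacing $z_2$ by $qz_2$ only rescales the argument of $p_n$, so that the left-hand side equals $z_1^{m-n}\big[p_n(z_1z_2;q^{\beta+m-n},q^\gamma|q)-p_n(qz_1z_2;q^{\beta+m-n},q^\gamma|q)\big]$. Applying \eqref{eq:qjacobi-15} with $\alpha=\beta+m-n$ and $x=z_1z_2$, using the collapse $\alpha+\gamma+n-1=m+\beta+\gamma-1$, and recognising $z_1^{m-n+1}p_{n-1}(z_1z_2;q^{\beta+m-n+1},q^{\gamma+1}|q)$ as $M_{m,n-1}^{(\beta,\gamma+1)}(z_1,z_2|q)$ (legitimate because $m\ge n\ge n-1$), the stated formula drops out after simplifying the $q$-shifted factorials.

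For the two $q^{-1}$-difference identities, the first step is to turn the weighted product into a function of $x=z_1z_2$ alone. From \eqref{eq:qjacobi-17} one has $w(x;q^{\beta+m-n},q^\gamma|q)=x^{m-n}w(x;q^{\beta},q^\gamma|q)$, whence
\[
w(z_1z_2;q^\beta,q^\gamma|q)\,M_{m,n}^{(\beta,\gamma)}(z_1,z_2|q)=z_2^{-(m-n)}\,w(z_1z_2;q^{\beta+m-n},q^\gamma|q)\,p_n(z_1z_2;q^{\beta+m-n},q^\gamma|q),
\]
and the same product with the factor $z_2^{-(m-n)}$ deleted for the version carrying the extra $z_2^{m-n}$. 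I would then use the chain rule above to move $D_{q^{-1},z_1}$ (respectively $D_{q^{-1},z_2}$) onto the $x$-variable, apply the lowering relation $D_{q^{-1},x}\big(w(x;q^\alpha,q^\gamma|q)p_n(x;q^\alpha,q^\gamma|q)\big)=\frac{1-q^\alpha}{q^{\alpha-1}(1-q)}\,w(x;q^{\alpha-1},q^{\gamma-1}|q)\,p_{n+1}(x;q^{\alpha-1},q^{\gamma-1}|q)$ with $\alpha=\beta+m-n$, and finally convert back using $w(x;q^{\beta+m-n-1},q^{\gamma-1}|q)=x^{m-n-1}w(x;q^{\beta},q^{\gamma-1}|q)$ together with $M_{m,n+1}^{(\beta,\gamma-1)}(z_1,z_2|q)=z_1^{m-n-1}p_{n+1}(z_1z_2;q^{\beta+m-n-1},q^{\gamma-1}|q)$; the hypothesis $m\ge n+1$ is exactly what makes this last identification use the $m'\ge n'$ branch of the definition of $M$. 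The residual powers of $z_1$ and $z_2$ then cancel identically and the constant is inherited from the scalar lowering relation. The only genuinely delicate part of the argument is keeping track of these powers and of the accompanying weight shifts when passing between the two-variable polynomials and the underlying little $q$-Jacobi polynomials; beyond the chain-rule reduction, no new idea is involved.
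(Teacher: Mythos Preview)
Your proposal is correct and follows exactly the route the paper itself indicates: reduce everything to the one-variable little $q$-Jacobi relations \eqref{eq:qjacobi-15}, its companion lowering relation, and the weight formula \eqref{eq:qjacobi-17} via the definition $M_{m,n}^{(\beta,\gamma)}(z_1,z_2|q)=z_1^{m-n}p_n(z_1z_2;q^{\beta+m-n},q^\gamma|q)$. The paper's own proof is just the single sentence ``obtained by applying the definition of $M_{m,n}^{(\beta,\gamma)}(z_{1},z_{2}|q)$ to \eqref{eq:qjacobi-15} and \eqref{eq:qjacobi-17}''; your write-up supplies precisely the bookkeeping (the chain rule $D_{q^{-1},z_j}[g(z_1z_2)]=z_{3-j}(D_{q^{-1},x}g)(z_1z_2)$ and the factorization $w(x;q^{\beta+m-n},q^\gamma|q)=x^{m-n}w(x;q^\beta,q^\gamma|q)$) that this sentence leaves implicit.
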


The above recurrences are obtained by applying the def\/inition of
$M_{m,n}^{(\beta,\gamma)}(z_{1},z_{2}|q)$
to~\eqref{eq:qjacobi-15} and~\eqref{eq:qjacobi-17}.

 \section{Polynomial solutions to dif\/ferential equations}\label{section7}

 In this section we study polynomial solutions to partial dif\/ferential equations. We are looking for polynomial solutions to the second-order partial dif\/ferential equation~\eqref{eqPDE1}. The results of this section should be contrasted with those in~\cite{Fin:Kam}.

 Let $f = \sum\limits_{m,n=0}^\infty a_{j,k}z_1^j z_2^k$ and substitute in~\eqref{eqPDE1}.

\begin{Theorem} \label{theorem7.1} The partial differential equation~\eqref{eqPDE1}, namely,
 \begin{gather*}
 \partial_{\partial z_1} \partial_{\partial z_2} f +
 \left( \frac{\beta-z_1z_2}{z_1} \right)
 \partial_{\partial z_2} f = -n f
 \end{gather*}
has a power series solution
\begin{gather}
f(z_1, z_2) = \sum_{j,k=0}^\infty a_{j,k} z_1^j z_2^k
\label{eqpowerser}
\end{gather}
 if and only if
 \begin{gather*}
 f(z_1, z_2) = \sum_{j=1}^\infty a_{j,0} \frac{n! z_1^j }{(\beta+j+1)_n} L_n^{(\beta+j)}(z_1z_2)
 + \sum_{k=0}^na_{0,k} z_2^k {}_2F_1\left( \left. \begin{matrix}
 -n+k, 1
 \\
k+1, \beta+1
 \end{matrix} \right| z_1z_2\right).
 \end{gather*}
 \end{Theorem}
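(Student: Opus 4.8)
The plan is to substitute the power series \eqref{eqpowerser} into the PDE \eqref{eqPDE1} and read off the recurrence the coefficients $a_{j,k}$ must satisfy, then solve that recurrence. First I would clear the factor $z_1$ in the denominator by rewriting \eqref{eqPDE1} in the equivalent polynomial form $z_1\,\partial_{z_1}\partial_{z_2} f + (\beta - z_1 z_2)\,\partial_{z_2} f = -n z_1 f$. Applying each operator term-by-term to $\sum a_{j,k} z_1^j z_2^k$ gives
\begin{gather*}
\sum_{j,k} a_{j,k}\, j k\, z_1^{j} z_2^{k-1}
+ \beta \sum_{j,k} a_{j,k}\, k\, z_1^{j-1} z_2^{k-1}
- \sum_{j,k} a_{j,k}\, k\, z_1^{j+1} z_2^{k}
= -n \sum_{j,k} a_{j,k}\, z_1^{j+1} z_2^{k}.
\end{gather*}
Shifting indices so every term carries the monomial $z_1^{j} z_2^{k}$, the coefficient of $z_1^{j} z_2^{k}$ yields, for $j\ge 0$, $k\ge 0$,
\begin{gather*}
(k+1)\big(j+\beta+1\big)\, a_{j,k+1} \;=\; (n-k)\, a_{j-1,k},
\end{gather*}
with the convention $a_{-1,k}=0$. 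This is the master recurrence; everything else is bookkeeping.

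Next I would analyze this recurrence. Fixing $j$ and iterating in $k$ expresses $a_{j,k}$ in terms of the ``boundary'' data $a_{j-\ell,0}$; more precisely, each row $a_{j,\cdot}$ is driven by the value $a_{j',0}$ for $j'\le j$. Two cases arise. For $j\ge 1$: starting from $a_{j,0}$ one generates $a_{j,1}, a_{j,2},\dots$, and the product $\prod (n-k)$ that accumulates truncates the Laguerre-type tail; collecting the resulting series in $z_2$ (with $z_1^j$ factored out) one recognizes $\sum_k \binom{n}{k}\frac{(-1)^k (z_1z_2)^k}{(\beta+j+1)_k}\cdot(\text{stuff})$, i.e.\ the Laguerre polynomial $L_n^{(\beta+j)}(z_1 z_2)$ up to the normalization $n!/(\beta+j+1)_n$ stated in the theorem — this is exactly consistent with the identification $Z^{(\beta)}_{m,n}(z_1,z_2)=z_1^{m-n}L_n^{(\beta+m-n)}(z_1z_2)$ from Section~\ref{section3}, since $\phi_n(x;\alpha)=L_n^{(\alpha+\beta)}(x)$ solves the Laguerre ODE and hence \eqref{eqPDE1} by Theorem~\ref{theorem2.4}. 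For $j=0$: the factor $j+\beta+1=\beta+1$ is still nonzero (as $\beta>-1$), but now the recurrence in $k$ has no ``input'' from a previous row, so $a_{0,k}$ for $0\le k\le n$ are free parameters; for $k> n$ the factor $(n-k)$ forces — wait, rather, for a fixed free $a_{0,k_0}$ with $0\le k_0\le n$, iterating upward in the $j$-index via the recurrence $a_{j,k}\propto a_{j-1,k-1}$-type chains produces the hypergeometric series ${}_2F_1(-n+k_0,1;\,k_0+1,\beta+1\,|\,z_1z_2)$ multiplying $z_2^{k_0}$. I would verify this by checking that this ${}_2F_1$ indeed satisfies \eqref{eqPDE1}: it is a one-variable function of $z_1z_2$ times $z_2^{k_0}$, so plugging into the PDE reduces to a single hypergeometric contiguous identity.

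The main obstacle is the combinatorial repackaging in the $j=0$ branch: showing that the solution built from a single seed $a_{0,k}$ is precisely the stated ${}_2F_1$. Concretely, after writing $f_k(z_1,z_2)=z_2^k\sum_{i\ge 0} b_i (z_1 z_2)^i$ and feeding it into $z_1\partial_{z_1}\partial_{z_2}+(\beta-z_1z_2)\partial_{z_2}+nz_1$, one gets a two-term recurrence $(i+1)(i+k+1)(i+\beta+1)\,b_{i+1}=(i+k-n)\,b_i$ (indices to be double-checked against the master recurrence), whose solution is $b_i=\dfrac{(-n+k)_i\,(1)_i}{(k+1)_i\,(\beta+1)_i\,i!}$, i.e.\ exactly the ${}_2F_1$ coefficients; and one must confirm the series terminates appropriately (it does, since $(-n+k)_i$ vanishes for $i>n-k$). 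The remaining steps — combining the two families by linearity, and checking no other solutions of \eqref{eqpowerser} form exist because every $a_{j,k}$ with $k\ge 1$ is determined by $a_{j-1,k-1}$ while $a_{j,0}$ ($j\ge1$) and $a_{0,k}$ ($0\le k\le n$) are the only free data — are then immediate. I would also note the implicit constraint: the theorem's first sum starts at $j=1$, and a constant term $a_{0,0}$ is absorbed into the $k=0$ hypergeometric term ${}_2F_1(-n,1;1,\beta+1\,|\,z_1z_2)$, so there is no inconsistency at $(j,k)=(0,0)$.
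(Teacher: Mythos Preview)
Your approach is essentially the same as the paper's: substitute the power series into \eqref{eqPDE1}, derive a two-term diagonal recurrence for the coefficients, and iterate it to express every $a_{j,k}$ in terms of the free boundary data $a_{j,0}$ ($j\ge1$) and $a_{0,k}$ ($0\le k\le n$), then recognise the resulting one-variable sums as Laguerre polynomials and terminating ${}_2F_1$'s. There are, however, two small computational slips you should fix: after multiplying by $z_1$, the term $\beta\,\partial_{z_2}f$ contributes $\beta\sum_{j,k} k\,a_{j,k}\,z_1^{j}z_2^{k-1}$ (not $z_1^{j-1}$), and the resulting recurrence reads $(k+1)(j+\beta)\,a_{j,k+1}=(k-n)\,a_{j-1,k}$, equivalently the paper's $a_{j,k}=\dfrac{k-1-n}{k(\beta+j)}\,a_{j-1,k-1}$; your version has $j+\beta+1$ and the wrong sign on the right. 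With those corrected, your identification of the two solution families and the free-parameter count goes through exactly as you describe.
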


 \begin{proof}
Substitute the power series \eqref{eqpowerser} for $f$ in \eqref{eqPDE1} and equate coef\/f\/icients of
like powers of $z_1$ and $z_2$ to f\/ind that
\begin{gather*}
a_{j,k} = \frac{k-1-n}{k(\beta +j)} a_{j-1,k-1}, \qquad j,k >0.
\end{gather*}
 We iterate this and f\/ind that
\begin{gather*}
a_{j,k} = \begin{cases} \dfrac{(-n)_k a_{j-k,0}}{k!(\beta+j-k+1)_k}, & j \ge k,\vspace{1mm}\\
\dfrac{(k-j-n)_j (k-j)!}{k!(\beta+1)j} a_{0,k-j}, & j \le k.
\end{cases}
\end{gather*}
This proves the theorem.
\end{proof}

 \begin{Theorem}
 In order for the equation
 \begin{gather*}
 \partial_{z_1}\partial_{z_2}f + \left(\frac{\beta}{z_1}- z_2\right) \partial_{z_2}f = \lambda f
\end{gather*}
to have a polynomials solution in $z_2$, it is necessary and sufficient that $\lambda =-n$, $n=0,1, 2, \dots$,
in which case the function~$f$ will be given as in Theorem~{\rm \ref{theorem7.1}}.
\end{Theorem}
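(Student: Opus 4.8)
The plan is to reduce the claimed statement to Theorem~\ref{theorem7.1}, which already characterizes power series solutions of the partial differential equation~\eqref{eqPDE1}. The equation here, $\partial_{z_1}\partial_{z_2}f + \big(\tfrac{\beta}{z_1}-z_2\big)\partial_{z_2}f = \lambda f$, is exactly~\eqref{eqPDE1} with $-n$ replaced by a general eigenvalue $\lambda$ and $\tfrac{\beta - z_1 z_2}{z_1}$ written out as $\tfrac{\beta}{z_1} - z_2$; so the real content is: \emph{among all polynomials in $z_2$ (with coefficients that may depend on $z_1$), solutions exist precisely when $\lambda$ is a nonpositive integer.}

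First I would set $f(z_1,z_2) = \sum_{k=0}^{N} c_k(z_1) z_2^k$ with $c_N \not\equiv 0$ and substitute into the equation, collecting the coefficient of $z_2^k$. The term $\partial_{z_1}\partial_{z_2} f$ contributes $\sum_k (k+1) c_{k+1}'(z_1) z_2^k$, the term $\tfrac{\beta}{z_1}\partial_{z_2} f$ contributes $\sum_k (k+1)\tfrac{\beta}{z_1} c_{k+1}(z_1) z_2^k$, and the term $-z_2 \partial_{z_2} f$ contributes $-\sum_k k\, c_k(z_1) z_2^k$; matching with $\lambda \sum_k c_k(z_1) z_2^k$ gives, for each $k$,
\begin{gather*}
(k+1)\Big( c_{k+1}'(z_1) + \tfrac{\beta}{z_1} c_{k+1}(z_1)\Big) = (\lambda + k) c_k(z_1).
\end{gather*}
Taking $k = N$ (the top degree): since $c_{N+1} \equiv 0$, the left side vanishes, so $(\lambda + N)c_N(z_1) = 0$, and as $c_N \not\equiv 0$ we must have $\lambda = -N$. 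This proves necessity with $n = N$. For sufficiency, when $\lambda = -n$ one simply invokes Theorem~\ref{theorem7.1}: the second sum there, $\sum_{k=0}^n a_{0,k} z_2^k \,{}_2F_1(-n+k,1;k+1,\beta+1;z_1z_2)$, is manifestly a polynomial in $z_2$ of degree at most $n$ (the hypergeometric factors are polynomials in $z_1 z_2$, hence in particular polynomials in $z_2$), and it is a genuine solution for any choice of the constants $a_{0,k}$; this exhibits polynomial-in-$z_2$ solutions, so $\lambda = -n$ is sufficient. Conversely the first sum in Theorem~\ref{theorem7.1} is the only other family of power-series solutions, and for $a_{j,0}\neq 0$ with $j\ge 1$ it involves $L_n^{(\beta+j)}(z_1 z_2)$ multiplied by $z_1^j$, which is not polynomial in $z_2$ unless $n=0$; so a polynomial-in-$z_2$ solution forces $a_{j,0}=0$ for all $j\ge 1$ (when $n\ge 1$) and the solution is precisely the finite sum described, matching the last sentence of the statement.

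The main obstacle, such as it is, is bookkeeping rather than conceptual: one must be careful that "polynomial solution in $z_2$" is interpreted as a polynomial in $z_2$ whose coefficients are functions (here, as it turns out, also polynomials in $z_1$), not as a bivariate polynomial, and one must check that the recursion $(k+1)(c_{k+1}' + \tfrac\beta{z_1} c_{k+1}) = (\lambda+k)c_k$ read downward from $k=N$ never forces $c_k \equiv 0$ prematurely for $0 \le k < N$ — this is automatic because the factor $(\lambda+k) = (k-N)$ is nonzero for $k<N$, so each $c_k$ is determined (up to the integration constant folded into $a_{0,k}$ or, equivalently, up to the homogeneous solution $z_1^{-\beta}$ of the first-order operator, which is excluded by the power-series/polynomial requirement). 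Given Theorem~\ref{theorem7.1} this is short; the only genuinely new line is the top-degree argument $(\lambda+N)c_N = 0 \Rightarrow \lambda = -N$, and I would present that explicitly and then cite Theorem~\ref{theorem7.1} for the converse and the closed form.
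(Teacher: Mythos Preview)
Your necessity argument is correct and is essentially the content the paper intends: writing $f=\sum_{k=0}^{N}c_k(z_1)z_2^{k}$ with $c_N\not\equiv 0$ and reading off the top-degree coefficient gives $(\lambda+N)c_N=0$, hence $\lambda=-N$. The paper gives no separate proof of this theorem; it is left as an immediate consequence of Theorem~\ref{theorem7.1} and its recursion $a_{j,k}=\frac{\lambda+k-1}{k(\beta+j)}a_{j-1,k-1}$, from which the same conclusion follows by noting that any nonzero boundary datum propagates to arbitrarily high $z_2$-degree unless some factor $\lambda+k$ vanishes. Your direct top-degree argument is a clean alternative and arguably more transparent.

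However, your final paragraph contains a genuine error. You assert that for $a_{j,0}\neq 0$ with $j\ge 1$ the term $z_1^{j}L_n^{(\beta+j)}(z_1z_2)$ ``is not polynomial in $z_2$ unless $n=0$''. This is false: $L_n^{(\beta+j)}(z_1z_2)$ is a polynomial of degree exactly $n$ in $z_1z_2$, hence a polynomial of degree $n$ in $z_2$, so $z_1^{j}L_n^{(\beta+j)}(z_1z_2)$ is polynomial in $z_2$ for every $n$. Consequently the first sum in Theorem~\ref{theorem7.1} \emph{also} contributes polynomial-in-$z_2$ solutions, and there is no reason to force $a_{j,0}=0$. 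This does not damage the stated theorem, which only claims that $f$ is ``given as in Theorem~\ref{theorem7.1}'' (i.e., both sums), but you should delete the claim that the first sum is excluded; it is an overreach and it is wrong.
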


 \section{A biorthogonal system}\label{section8}

\begin{Theorem}
\label{theorem:orthogonal system 2} Given three sets $A$, $B$, $C$ and
a function $\rho$ from $C$ to $B$. For each $b\in B$, let $\{ S,\mathcal{F}_{S},d\mu(x)\} $
and $\{ T,\mathcal{F}_{T},d\nu(x)\} $ be two probability
spaces such that $\{ \varphi_{a}(x;b)f(x,b)\} _{a\in A}\subset L^{2}(S,d\mu(x))$
and $\{ \psi_{c}(y)\} _{c\in C}\subset L^{2}(T,d\nu(y))$,
\begin{gather*}
\int_{S}\varphi_{a_{1}}(x;b)\overline{\varphi_{a_{2}}(x;b)}
 |f(x,b) |^{2}d\mu(x)=\zeta_{a_{1}}(b)\delta_{a_{1},a_{2}}
\end{gather*}
and
\begin{gather*}
\int_{T}\psi_{c_{1}}(y)\overline{\psi_{c_{2}}(y)}d\nu(y)=\eta_{c_{1}}\delta_{c_{1},c_{2}}
.
\end{gather*}
Then
\begin{gather*}
\int_{S\times T}\Phi_{a_{1},c_{1}}(x,y)\overline{\Phi_{a_{2},c_{2}}(x,y)}d\mu(x)\nu(y)=\zeta_{a_{1}}
(\rho(c_{1}))\eta_{c_{1}}\delta_{a_{1},a_{2}}\delta_{c_{1},c_{2}},
\end{gather*}
where
\begin{gather*}
\Phi_{a_{1},c_{1}}(x,y)=\varphi_{a_{1}}(x;\rho(c_{1}))\psi_{c_{1}}(y)f(x;\rho(c_{1})).
\end{gather*}
 \end{Theorem}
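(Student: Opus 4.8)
The plan is to deduce the orthogonality relation directly from Fubini's theorem, after a two-case split according to whether $c_1=c_2$ or not.

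First I would check that each $\Phi_{a,c}$ lies in $L^2(S\times T,d\mu\,d\nu)$. Since $|\Phi_{a,c}(x,y)|^2=|\varphi_a(x;\rho(c))f(x,\rho(c))|^2\,|\psi_c(y)|^2$ is a nonnegative product of functions of the separate variables, Tonelli's theorem lets me evaluate its integral over $S\times T$ as an iterated integral, and the two hypothesised normalizations (taken with $a_1=a_2=a$ and $c_1=c_2=c$) give the finite value $\zeta_a(\rho(c))\,\eta_c$. Consequently $\Phi_{a_1,c_1}\overline{\Phi_{a_2,c_2}}\in L^1(S\times T)$ by the Cauchy–Schwarz inequality, so Fubini's theorem applies to it.

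Next I would factor the integrand. The product $\Phi_{a_1,c_1}(x,y)\overline{\Phi_{a_2,c_2}(x,y)}$ separates as
\[
\big[\varphi_{a_1}(x;\rho(c_1))\,\overline{\varphi_{a_2}(x;\rho(c_2))}\,f(x,\rho(c_1))\,\overline{f(x,\rho(c_2))}\big]\cdot\big[\psi_{c_1}(y)\,\overline{\psi_{c_2}(y)}\big],
\]
so by Fubini the double integral equals the $x$-integral over $S$ times $\int_T\psi_{c_1}(y)\overline{\psi_{c_2}(y)}\,d\nu(y)=\eta_{c_1}\delta_{c_1,c_2}$. If $c_1\neq c_2$ this last factor vanishes, and both sides of the asserted identity are $0$. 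If $c_1=c_2$, then $\rho(c_1)=\rho(c_2)=:b$, and the $x$-integral collapses to $\int_S\varphi_{a_1}(x;b)\overline{\varphi_{a_2}(x;b)}\,|f(x,b)|^2\,d\mu(x)=\zeta_{a_1}(b)\delta_{a_1,a_2}$; multiplying by $\eta_{c_1}$ then gives $\zeta_{a_1}(\rho(c_1))\,\eta_{c_1}\,\delta_{a_1,a_2}$, which is the claimed value since $\delta_{c_1,c_2}=1$.

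There is no genuine obstacle here; the only point needing attention is the justification of Fubini's theorem, handled by the $L^2$-membership check above. The hypothesis that $\rho$ is a bona fide function $C\to B$ is used precisely in the implication $c_1=c_2\Rightarrow\rho(c_1)=\rho(c_2)$, which is what allows the $x$-integral to be read off from the given one-variable relation. I would also note in passing that the measure appearing in the conclusion should be $d\mu(x)\,d\nu(y)$.
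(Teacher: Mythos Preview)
Your proof is correct and follows essentially the same route as the paper's: verify $\Phi_{a,c}\in L^2(S\times T)$ via Tonelli, use Cauchy--Schwarz to get $L^1$-membership of the product, then apply Fubini and evaluate the $y$-integral first so that the $\delta_{c_1,c_2}$ collapses $\rho(c_2)$ to $\rho(c_1)$ in the remaining $x$-integral. Your explicit case split and your remark about the typo $d\mu(x)\nu(y)$ versus $d\mu(x)\,d\nu(y)$ are fine additions, but there is no substantive difference from the paper's argument.
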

\begin{proof}
Observe that for each $\left(a,c\right)\in A\times C$, $\Phi_{a,c}(x,y)\in L^{2} (S\times T,d\mu(x)d\nu(y) )$
by Fubini's theorem. Then for $a_{1},a_{2}\in A$, $c_{1},c_{2}\in C$
we have $\Phi_{a_{1},c_{1}}(x,y)
\overline{\Phi_{a_{2},c_{2}}(x,y)}\in L (S\times T,d\mu(x)d\nu(y) )$
by applying Cauchy--Schwartz inequality. By applying Fubini's theorem
we get
\begin{gather*}
\int_{S\times T}
\Phi_{a_{1},c_{1}}(x,y)\overline{\Phi_{a_{2},c_{2}}(x,y)}d\mu(x)\nu(y)\\
 =\int_{S}\varphi_{a_{1}}(x;\rho(c_{1}))
 f(x,\rho(c_{1}))\overline{\varphi_{a_{2}}(x;\rho(c_{2}))f(x,\rho(c_{2}))}d\mu(x)
\int_{T}\psi_{c_{1}}(y)\overline{\psi_{c_{2}}(y)}d\nu(y)\\
 =\eta_{c_{1}}\delta_{c_{1},c_{2}}
 \int_{S}\varphi_{a_{1}}(x;\rho(c_{1}))\overline{\varphi_{a_{2}}(x;\rho(c_{1}))}
 \left|f(x,\rho(c_{1}))\right|^{2}d\mu(x)
 =\zeta_{a_{1}}(\rho(c_{1}))\eta_{c_{1}}\delta_{a_{1},a_{2}}\delta_{c_{1},c_{2}}.\tag*{\qed}
\end{gather*}
\renewcommand{\qed}{}
\end{proof}

For $a,b,c,d\in(-1,1)$, the Askey--Wilson polynomials
\begin{gather*}
p_{n}(\cos\theta;a,b,c,d|q)= {}_{4}\phi_{3}\left(\begin{matrix}
q^{-n},abcdq^{n-1},ae^{i\theta},ae^{-i\theta} \\
ab,ac,ad
\end{matrix};q,q\right),
\end{gather*}
 satisfy
\begin{gather*}
 \int_{-1}^{1}p_{m}(x;a,b,c,d|q)p_{n}(x;a,b,c,d|q)w(x;a,b,c,d|q)dx \\ 
\qquad{} =\frac{2\pi(abcdq^{2n};q)_{\infty}(abcdq^{n-1};q)_{n}\delta_{m,n}}{(q^{n+1};q)_{\infty}
 (abq^{n},acq^{n},adq^{n},bcq^{n},bdq^{n},cdq^{n};q)_{\infty}},
\end{gather*}
 where
\begin{gather*}
w(x;a,b,c,d|q)=\frac{h(x,1)h(x,\sqrt{q})h(x,-1)h(x,-\sqrt{q})}{h(x,a)h(x,b)h(x,c)h(x,d)\sqrt{1-x^{2}}},
\\
h(x,a)=\prod_{k=0}^{\infty}\big(1-2axq^{k}+a^{2}q^{2k}\big),\qquad x=\cos\theta.
\end{gather*}

\begin{Theorem}\label{theorem:tensor-askey-wilson} Let
\begin{gather*}
 u_{j,k}^{(\gamma,\delta)}(x,y;a_{1},b_{1},c_{1},d_{1};a_{2},b_{2},c_{2},d_{2}|q)
= p_{j}\big(x;a_{1},b_{1},c_{1},d_{1}q^{\gamma k+\delta}|q\big)p_{k}(y;a_{2},b_{2},c_{2},d_{2}|q),
\\
 v_{j,k}^{(\gamma,\delta)}(x,y;a_{1},b_{1},c_{1},d_{1};a_{2},b_{2},c_{2},d_{2}|q)
= \frac{p_{j}(x;a_{1},b_{1},c_{1},d_{1}q^{\gamma k+\delta}|q)p_{k}(y;a_{2},b_{2},c_{2},d_{2}|q)}{h(x,d_{1}q^{\gamma k+\delta})},
\\
 w(x,y;a_{1},b_{1},c_{1};a_{2},b_{2},c_{2},d_{2}|q)\\
\qquad {}= \frac{h(x,1)h(x,\sqrt{q})h(x,-1)h(x,-\sqrt{q})}{h(x,a_{1})h(x,b_{1})h(x,c_{1})\sqrt{1-x^{2}}}
 \frac{h(y,1)h(y,\sqrt{q})h(y,-1)h(y,-\sqrt{q})}{h(y,a_{2})h(y,b_{2})h(y,c_{2})h(y,d_{2})\sqrt{1-y^{2}}},
\\
 p_{j,k}^{(\alpha,\beta,\gamma,\delta)}(x,y;a_{1},b_{1},c_{1},d_{1};a_{2},b_{2},c_{2},d_{2}|q)\\
\qquad{} = \frac{p_{j}(x;a_{1},b_{1},c_{1}q^{\alpha k+\beta},d_{1}q^{\gamma k+\delta}|q)p_{k}(y;a_{2},b_{2},c_{2},d_{2}|q)}{h(x,c_{1}q^{\alpha k+\beta})},
\\
 q_{j,k}^{(\alpha,\beta,\gamma,\delta)}(x,y;a_{1},b_{1},c_{1},d_{1};a_{2},b_{2},c_{2},d_{2}|q)\\
\qquad{} = \frac{p_{j}(x;a_{1},b_{1},c_{1}q^{\alpha k+\beta},d_{1}q^{\gamma k+\delta}|q)p_{k}(y;a_{2},b_{2},c_{2},d_{2}|q)}{h(x,d_{1}q^{\gamma k+\delta})},
\end{gather*}
and
\begin{gather*}
 w(x,y;a_{1},b_{1};a_{2},b_{2},c_{2},d_{2}|q)\\
\qquad{} = \frac{h(x,1)h(x,\sqrt{q})h(x,-1)h(x,-\sqrt{q})}{h(x,a_{1})h(x,b_{1})\sqrt{1-x^{2}}}
 \frac{h(y,1)h(y,\sqrt{q})h(y,-1)h(y,-\sqrt{q})}{h(y,a_{2})h(y,b_{2})h(y,c_{2})h(y,d_{2})\sqrt{1-y^{2}}},
\end{gather*}
where
\begin{gather*}
x=\cos\theta,\qquad y=\cos\varphi,\qquad\alpha,\beta\ge0,
\end{gather*}
then
\begin{gather*}
 \int_{-1}^{1}\int_{-1}^{1}u_{j,k}^{(\gamma,\delta)}(x,y;a_{1},b_{1},c_{1},d_{1};a_{2},b_{2},c_{2},d_{2}|q)
 v_{m,n}^{(\gamma,\delta)}(x,y;a_{1},b_{1},c_{1},d_{1};a_{2},b_{2},c_{2},d_{2}|q)\\
\qquad\quad{} \times w(x,y;a_{1},b_{1},c_{1};a_{2},b_{2},c_{2},d_{2}|q)dxdy\\
\qquad{} = \frac{4\pi^{2}(a_{2}b_{2}c_{2}d_{2}q^{2k};q)_{\infty}(a_{2}b_{2}c_{2}d_{2}q^{k-1};q)_{k}}{(q^{k+1},a_{2}b_{2}q^{k};q)_{\infty}
(a_{2}c_{2}q^{k},a_{2}d_{2}q^{k},b_{2}c_{2}q^{k},b_{2}d_{2}q^{k},c_{2}d_{2}q^{k};q)_{\infty}}\\
\qquad\quad{}
\times \frac{(a_{1}b_{1}c_{1}d_{1}q^{2j+\gamma k+\delta};q)_{\infty}}{(q^{j+1},a_{1}b_{1}q^{j},a_{1}c_{1}q^{j+ k},a_{1}d_{1}q^{j+\gamma k+\delta};q)_{\infty}}\\
\qquad\quad{}
\times
 \frac{(a_{1}b_{1}c_{1}d_{1}q^{j+\gamma k+\delta-1};q)_{j}\delta_{j,m}\delta_{k,n}}{(b_{1}c_{1}q^{j+ k},b_{1}d_{1}q^{j+\gamma k+\delta},c_{1}d_{1}q^{j+\gamma k+\delta};q)_{\infty}}.
\end{gather*}
and
\begin{gather*}
 \int_{-1}^{1}\int_{-1}^{1}p_{j,k}^{(\alpha,\beta,\gamma,\delta)}(x,y;a_{1},b_{1},c_{1},d_{1};a_{2},b_{2},c_{2},d_{2}|q)\\
\qquad\quad{}\times q_{m,n}^{(\alpha,\beta,\gamma,\delta)}(x,y;a_{1},b_{1},c_{1},d_{1};a_{2},b_{2},c_{2},d_{2}|q)
 w(x,y;a_{1},b_{1};a_{2},b_{2},c_{2},d_{2}|q)dxdy\\
\qquad{}= \frac{4\pi^{2}(a_{2}b_{2}c_{2}d_{2}q^{2k};q)_{\infty}(a_{2}b_{2}c_{2}d_{2}q^{k-1};q)_{k}}{(q^{k+1},a_{2}b_{2}q^{k};q)_{\infty}(a_{2}c_{2}q^{k},a_{2}d_{2}q^{k},b_{2}c_{2}q^{k},b_{2}d_{2}q^{k},c_{2}d_{2}q^{k};q)_{\infty}}\\
\qquad\quad{} \times \frac{(a_{1}b_{1}c_{1}d_{1}q^{2j+(\alpha+\gamma)k+\beta+\delta};q)_{\infty}}{(q^{j+1},a_{1}b_{1}q^{j},a_{1}c_{1}q^{j+\alpha k+\beta},a_{1}d_{1}q^{j+\gamma k+\delta};q)_{\infty}}\\
\qquad\quad{}
\times \frac{(a_{1}b_{1}c_{1}d_{1}q^{j+(\alpha+\gamma)k+\beta+\delta-1};q)_{j}\delta_{j,m}\delta_{k,n}}{(b_{1}c_{1}q^{j+\alpha k+\beta},b_{1}d_{1}q^{j+\gamma k+\delta},c_{1}d_{1}q^{j+(\alpha+\gamma)k+\beta+\delta};q)_{\infty}}.
\end{gather*}
\end{Theorem}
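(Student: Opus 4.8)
The plan is to derive both identities from the abstract biorthogonality of Theorem~\ref{theorem:orthogonal system 2}, using the quoted Askey--Wilson orthogonality relation as the one-dimensional input in \emph{both} variables and encoding the parameter shifts through the map $\rho$. Two elementary remarks are needed first. For $x=\cos\theta\in(-1,1)$ and $b\in(-1,1)$ one has $h(x,b)=\prod_{n\ge 0}\bigl|1-bq^{n}e^{i\theta}\bigr|^{2}>0$, so $h(x,b)^{-1/2}$ is a genuine positive function; and since $0<q<1$ with $\alpha,\beta\ge 0$ (the parallel hypotheses $\gamma,\delta\ge 0$ being implicit), the factors $q^{\alpha k+\beta}$ and $q^{\gamma k+\delta}$ lie in $(0,1]$, so the shifted parameters $c_{1}q^{\alpha k+\beta}$ and $d_{1}q^{\gamma k+\delta}$ remain in $(-1,1)$ and the Askey--Wilson orthogonality is valid for each of them.

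For the first identity take $A=C=\mathbb{Z}_{\ge 0}$, $S=T=[-1,1]$. Let $d\nu(y)$ be the Askey--Wilson weight $w(y;a_{2},b_{2},c_{2},d_{2}|q)\,dy$ normalized to a probability measure and $\psi_{k}(y)=p_{k}(y;a_{2},b_{2},c_{2},d_{2}|q)$, so the $\psi$-orthogonality is exactly the quoted relation with $\eta_{k}$ the corresponding squared norm. Take $B=(-1,1)$, $\rho(k)=d_{1}q^{\gamma k+\delta}$, $\varphi_{j}(x;b)=p_{j}(x;a_{1},b_{1},c_{1},b|q)$, $f(x,b)=h(x,b)^{-1/2}$, and $d\mu(x)$ proportional to $\frac{h(x,1)h(x,\sqrt q)h(x,-1)h(x,-\sqrt q)}{h(x,a_{1})h(x,b_{1})h(x,c_{1})\sqrt{1-x^{2}}}\,dx$; then $|f(x,b)|^{2}\,d\mu(x)$ is proportional to $w(x;a_{1},b_{1},c_{1},b|q)\,dx$, so the $\varphi$-orthogonality is again Askey--Wilson with $\zeta_{j}(b)$ the squared norm. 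With $\Phi_{j,k}(x,y)=\varphi_{j}(x;\rho(k))f(x;\rho(k))\psi_{k}(y)$, the product $\Phi_{j,k}(x,y)\overline{\Phi_{m,n}(x,y)}$ integrated against $d\mu(x)\,d\nu(y)$ coincides, up to the (cancelling) normalizing constants of $d\mu$ and $d\nu$, with $u_{j,k}(x,y)\,v_{m,n}(x,y)\,w(x,y;a_{1},b_{1},c_{1};a_{2},b_{2},c_{2},d_{2}|q)$ integrated against $dx\,dy$: the factor $f\,\overline f=h(x,\cdot)^{-1}$ is split \emph{asymmetrically}, the polynomial factor $u_{j,k}$ carrying weight $1$ and $v_{m,n}$ carrying the whole $h(x,d_{1}q^{\gamma k+\delta})^{-1}$, while the numerator $h$-factors together with $h(x,a_{1})^{-1},h(x,b_{1})^{-1},h(x,c_{1})^{-1}$ come from $w$. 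Theorem~\ref{theorem:orthogonal system 2} now gives $\zeta_{j}(\rho(k))\,\eta_{k}\,\delta_{j,m}\delta_{k,n}$.

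The second identity is obtained the same way. Now $\rho(k)=\bigl(c_{1}q^{\alpha k+\beta},\,d_{1}q^{\gamma k+\delta}\bigr)$, $\varphi_{j}(x;(c,d))=p_{j}(x;a_{1},b_{1},c,d|q)$, $f(x;(c,d))=\bigl(h(x,c)h(x,d)\bigr)^{-1/2}$, and $d\mu(x)$ is proportional to $\frac{h(x,1)h(x,\sqrt q)h(x,-1)h(x,-\sqrt q)}{h(x,a_{1})h(x,b_{1})\sqrt{1-x^{2}}}\,dx$, so that $|f|^{2}\,d\mu$ is proportional to $w(x;a_{1},b_{1},c,d|q)\,dx$. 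Here the asymmetric split of $f\,\overline f=\bigl(h(x,c_{1}q^{\alpha k+\beta})h(x,d_{1}q^{\gamma k+\delta})\bigr)^{-1}$ assigns $h(x,c_{1}q^{\alpha k+\beta})^{-1}$ to $p_{j,k}$ and $h(x,d_{1}q^{\gamma k+\delta})^{-1}$ to $q_{m,n}$, matching their two denominators, and $w(x,y;a_{1},b_{1};a_{2},b_{2},c_{2},d_{2}|q)$ supplies everything else; the $\psi$-system is unchanged. Theorem~\ref{theorem:orthogonal system 2} again yields the stated biorthogonality, with the relevant product $\zeta_{j}(\rho(k))\,\eta_{k}$.

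Finally, the explicit right-hand sides follow by substituting into the quoted norm $\frac{2\pi(abcdq^{2n};q)_{\infty}(abcdq^{n-1};q)_{n}}{(q^{n+1};q)_{\infty}(abq^{n},acq^{n},adq^{n},bcq^{n},bdq^{n},cdq^{n};q)_{\infty}}$: once with $(a,b,c,d)=(a_{2},b_{2},c_{2},d_{2})$ and $n=k$ to get $\eta_{k}$, and once with $(a,b,c,d)=(a_{1},b_{1},c_{1},d_{1}q^{\gamma k+\delta})$ (respectively $(a_{1},b_{1},c_{1}q^{\alpha k+\beta},d_{1}q^{\gamma k+\delta})$) and $n=j$ to get $\zeta_{j}(\rho(k))$, then multiplying. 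The main obstacle is the bookkeeping: one must check that the numerator $h$-factors together with the reciprocal factors appearing in $u_{j,k},v_{m,n}$ (respectively $p_{j,k},q_{m,n}$) reassemble \emph{exactly} into the two full Askey--Wilson weights with the correctly shifted parameters, and then push all the $q$-powers $q^{j+\alpha k+\beta}$, $q^{j+\gamma k+\delta}$, $q^{2j+(\alpha+\gamma)k+\beta+\delta}$, $q^{j+(\alpha+\gamma)k+\beta+\delta-1}$, etc., through the norm formula so as to match the printed expressions --- a long but entirely routine manipulation of the $q$-shifted factorials $(\,\cdot\,;q)_{\infty}$ and $(\,\cdot\,;q)_{n}$.
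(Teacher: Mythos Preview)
Your overall strategy---separate the variables and use the one-dimensional Askey--Wilson orthogonality twice---is correct and is exactly what the paper does. But your attempt to package this as a direct application of Theorem~\ref{theorem:orthogonal system 2} has a genuine gap.

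In Theorem~\ref{theorem:orthogonal system 2} the two factors in the integrand have the \emph{same} functional form $\Phi_{a,c}=\varphi_a(x;\rho(c))\psi_c(y)f(x;\rho(c))$, so the product carries $f(x;\rho(c_1))\overline{f(x;\rho(c_2))}$. With your choice $f(x,b)=h(x,b)^{-1/2}$ this gives the \emph{symmetric} factor $h(x,d_1q^{\gamma k+\delta})^{-1/2}h(x,d_1q^{\gamma n+\delta})^{-1/2}$. The integrand you actually want, $u_{j,k}v_{m,n}w$, carries the \emph{asymmetric} factor $h(x,d_1q^{\gamma n+\delta})^{-1}$. These are different whenever $k\ne n$, so the claim that ``$\Phi_{j,k}\overline{\Phi_{m,n}}\,d\mu\,d\nu$ coincides with $u_{j,k}v_{m,n}w\,dx\,dy$'' is false at the level of integrands, and Theorem~\ref{theorem:orthogonal system 2} does not compute the integral in the statement. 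The same objection applies to the $(p,q)$ identity: your $f\overline f=(h(x,c)h(x,d))^{-1}$ is symmetric in $(k,n)$, whereas $p_{j,k}q_{m,n}$ puts $h(x,c_1q^{\alpha k+\beta})^{-1}$ on one side and $h(x,d_1q^{\gamma n+\delta})^{-1}$ on the other. An ``asymmetric split'' of $|f|^2$ is simply not something the abstract theorem provides.

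The repair is immediate and is precisely the paper's argument: apply Fubini and integrate in $y$ first. The $y$-integral is the ordinary Askey--Wilson orthogonality and produces $\eta_k\delta_{k,n}$; once $k=n$ is forced, the $x$-integrand becomes
\[
p_j\bigl(x;a_1,b_1,c_1,d_1q^{\gamma k+\delta}\bigr)\,p_m\bigl(x;a_1,b_1,c_1,d_1q^{\gamma k+\delta}\bigr)\,w\bigl(x;a_1,b_1,c_1,d_1q^{\gamma k+\delta}\bigr),
\]
and a second application of the Askey--Wilson orthogonality (now with the correctly shifted fourth parameter) gives $\zeta_j\delta_{j,m}$. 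The $(p,q)$ case is identical, with both $c_1$ and $d_1$ shifted. Your final paragraph on reading off the explicit norms is fine.
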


\begin{proof}
The details of the calculation are
\begin{gather*}
 \int_{-1}^{1}\int_{-1}^{1}p_{j,k}^{(\alpha,\beta,\gamma,\delta)}(x,y;a_{1},b_{1},c_{1},d_{1};a_{2},b_{2},c_{2},d_{2}|q)\\
\qquad\quad{}\times q_{m,n}^{(\alpha,\beta,\gamma,\delta)}(x,y;a_{1},b_{1},c_{1},d_{1};a_{2},b_{2},c_{2},d_{2}|q)
 w(x,y;a_{1},b_{1};a_{2},b_{2},c_{2},d_{2}|q)dxdy\\
 \qquad{}
= \int_{-1}^{1}p_{j}\big(x;a_{1},b_{1},c_{1}q^{\alpha k+\beta},d_{1}q^{\gamma k+\delta}|q\big)
 p_{m}\big(x;a_{1},b_{1},c_{1}q^{\alpha n+\beta},d_{1}q^{\gamma n+\delta}|q\big)\\
\qquad\quad{}
\times \frac{h(x,1)h(x,\sqrt{q})h(x,-1)h(x,-\sqrt{q})dx}{h(x,a_{1})h(x,b_{1})h(x,c_{1}q^{\alpha k+\beta})h(x,d_{1}q^{\gamma n+\delta})\sqrt{1-x^{2}}}\\
\qquad\quad{}
\times \int_{-1}^{1}p_{k}(y;a_{2},b_{2},c_{2},d_{2}|q)p_{n}(y;a_{2},b_{2},c_{2},d_{2}|q)
 w(y;a_{2},b_{2},c_{2},d_{2}|q)dy\\
\qquad{} = \frac{2\pi(a_{2}b_{2}c_{2}d_{2}q^{2k};q)_{\infty}(a_{2}b_{2}c_{2}d_{2}q^{k-1};q)_{k}\delta_{k,n}}{(q^{k+1};q)_{\infty}(a_{2}b_{2}q^{k},a_{2}c_{2}q^{k},a_{2}d_{2}q^{k},b_{2}c_{2}q^{k},b_{2}d_{2}q^{k},c_{2}d_{2}q^{k};q)_{\infty}}\\
\qquad\quad{}\times \int_{-1}^{1}p_{j}(x;a_{1},b_{1},c_{1}q^{\alpha k+\beta},d_{1}q^{\gamma k+\delta}|q)p_{m}(x;a_{1},b_{1},c_{1}q^{\alpha k+\beta},d_{1}q^{\gamma k+\delta}|q)\\
\qquad\quad{}
\times \frac{h(x,1)h(x,\sqrt{q})h(x,-1)h(x,-\sqrt{q})dx}{h(x,a_{1})h(x,b_{1})h(x,c_{1}q^{\alpha k+\beta})h(x,d_{1}q^{\gamma k+\delta})\sqrt{1-x^{2}}}\\
\qquad{}
= \frac{4\pi^{2}(a_{2}b_{2}c_{2}d_{2}q^{2k};q)_{\infty}(a_{2}b_{2}c_{2}d_{2}q^{k-1};q)_{k}}{(q^{k+1},a_{2}b_{2}q^{k};q)_{\infty}(a_{2}c_{2}q^{k},a_{2}d_{2}q^{k},b_{2}c_{2}q^{k},b_{2}d_{2}q^{k},c_{2}d_{2}q^{k};q)_{\infty}}\\
\qquad\quad{}
\times \frac{(a_{1}b_{1}c_{1}d_{1}q^{2j+(\alpha+\gamma)k+\beta+\delta};q)_{\infty}}{(q^{j+1},a_{1}b_{1}q^{j},a_{1}c_{1}q^{j+\alpha k+\beta},a_{1}d_{1}q^{j+\gamma k+\delta};q)_{\infty}}\\
\qquad\quad{}
\times \frac{(a_{1}b_{1}c_{1}d_{1}q^{j+(\alpha+\gamma)k+\beta+\delta-1};q)_{j}\delta_{j,m}\delta_{k,n}}{(b_{1}c_{1}q^{j+\alpha k+\beta},b_{1}d_{1}q^{j+\gamma k+\delta},c_{1}d_{1}q^{j+(\alpha+\gamma)k+\beta+\delta};q)_{\infty}}.
\end{gather*}
The $u$, $v$ orthogonality can be proved similarly.
 \end{proof}

\begin{Theorem}
\label{theorem:tensor-askey-wilson-1} Let
\begin{gather*}
 p_{j,k}^{(\gamma,\delta)}(x,y;a_{1},b_{1},c_{1},d_{1};a_{2},b_{2},c_{2},d_{2}|q)
= \frac{p_{j}(x;a_{1},b_{1},c_{1},d_{1}q^{\gamma k+\delta}|q)p_{k}(y;a_{2},b_{2},c_{2},d_{2}|q)}{(d_{1}q^{\gamma k+\delta}e^{i\theta};q)_{\infty}}.
\end{gather*}
Then
\begin{gather*}
 \int_{-1}^{1}\int_{-1}^{1}p_{j,k}^{(\gamma,\delta)}(x,y;a_{1},b_{1},c_{1},d_{1};a_{2},b_{2},c_{2},d_{2}|q)\\
\qquad\quad{}\times \overline{p_{m,n}^{(\gamma,\delta)}(x,y;a_{1},b_{1},c_{1},d_{1};a_{2},b_{2},c_{2},d_{2}|q)}
 w(x,y;a_{1},b_{1},c_{1};a_{2},b_{2},c_{2},d_{2}|q)dxdy\\
\qquad{}
= \frac{4\pi^{2}\delta_{j,m}\delta_{k,n}}{(q^{m+1},a_{1}b_{1}q^{m},a_{1}c_{1}q^{m},b_{1}c_{1}q^{m},b_{1}c_{1}q^{m};q)_{\infty}}\\
\qquad\quad{}
\times \frac{(a_{1}b_{1}c_{1}d_{1}q^{2m+\gamma n+\delta};q)_{\infty}(a_{1}b_{1}c_{1}d_{1}q^{m+\gamma n+\delta-1};q)_{m}}{(a_{1}d_{1}q^{m+\gamma n+\delta},b_{1}d_{1}q^{m+\gamma n+\delta},c_{1}d_{1}q^{m+\gamma n+\delta};q)_{\infty}}\\
\qquad\quad{} \times \frac{(a_{2}b_{2}c_{2}d_{2}q^{2n};q)_{\infty}(a_{2}b_{2}c_{2}d_{2}q^{n-1};q)_{n}}{(q^{n+1},a_{2}
b_{2}q^{n},a_{2}c_{2}q^{n},a_{2}d_{2}q^{n},b_{2}c_{2}q^{n},b_{2}d_{2}q^{n},c_{2}d_{2}q^{n};q)_{\infty}}.
\end{gather*}
where
$x=\cos\theta$, $y=\cos\varphi$, $\alpha,\beta\ge0$ and $w(x,y;a_{1},b_{1},c_{1};a_{2},b_{2},c_{2},d_{2}|q)$ is the same as in Theorem~{\rm \ref{theorem:tensor-askey-wilson}}.
\end{Theorem}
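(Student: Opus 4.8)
The plan is to prove Theorem~\ref{theorem:tensor-askey-wilson-1} in the same way as Theorem~\ref{theorem:tensor-askey-wilson}, the only extra ingredient being the elementary factorization
\begin{gather*}
h(x,a)=\prod_{k=0}^{\infty}\big(1-aq^{k}e^{i\theta}\big)\big(1-aq^{k}e^{-i\theta}\big)=\big(ae^{i\theta};q\big)_{\infty}\big(ae^{-i\theta};q\big)_{\infty},\qquad x=\cos\theta,
\end{gather*}
which follows from $1-2ax q^{k}+a^{2}q^{2k}=(1-aq^{k}e^{i\theta})(1-aq^{k}e^{-i\theta})$; in particular $\big|(ae^{i\theta};q)_{\infty}\big|^{2}=h(x,a)$ for real $a$. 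Since the Askey--Wilson polynomials are real on $[-1,1]$, the only effect of the bar in $\overline{p_{m,n}^{(\gamma,\delta)}}$ is to replace the denominator $(d_{1}q^{\gamma n+\delta}e^{i\theta};q)_{\infty}$ by $(d_{1}q^{\gamma n+\delta}e^{-i\theta};q)_{\infty}$.

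Concretely, I would apply Fubini's theorem and carry out the $y$-integral first. The $y$-part of $w(x,y;a_{1},b_{1},c_{1};a_{2},b_{2},c_{2},d_{2}|q)$ is exactly the genuine Askey--Wilson weight $w(y;a_{2},b_{2},c_{2},d_{2}|q)$, so the Askey--Wilson orthogonality relation quoted before the theorem produces $\delta_{k,n}$ together with the $y$-normalization constant (the last line of the claimed right-hand side, up to the factor $2\pi$). On the diagonal $k=n$ the two $\theta$-denominators combine, by the boxed identity, into $1/h(x,d_{1}q^{\gamma k+\delta})$, and the $x$-part of $w$, namely $\dfrac{h(x,1)h(x,\sqrt q)h(x,-1)h(x,-\sqrt q)}{h(x,a_{1})h(x,b_{1})h(x,c_{1})\sqrt{1-x^{2}}}$, is turned into the honest Askey--Wilson weight $w(x;a_{1},b_{1},c_{1},d_{1}q^{\gamma k+\delta}|q)$. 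A second application of Askey--Wilson orthogonality, now in $x$ with fourth parameter $d_{1}q^{\gamma k+\delta}$, gives $\delta_{j,m}$ and the corresponding normalization with $d\mapsto d_{1}q^{\gamma n+\delta}$; substituting this into the explicit Askey--Wilson norm reproduces the second and third lines of the stated right-hand side, and multiplying the two constants (and restoring the $2\pi$ from each factor) yields the asserted value after routine simplification of $q$-shifted factorials. Equivalently, the whole statement is the instance of the abstract biorthogonality Theorem~\ref{theorem:orthogonal system 2} obtained by taking $A=C=\{0,1,2,\dots\}$, $\rho(k)=q^{\gamma k+\delta}$, $\psi_{k}(y)=p_{k}(y;a_{2},b_{2},c_{2},d_{2}|q)$, $\varphi_{j}(x;b)=p_{j}(x;a_{1},b_{1},c_{1},d_{1}b|q)$, and $f(x,b)=1/(d_{1}b\,e^{i\theta};q)_{\infty}$: then $|f(x,b)|^{2}$ times the $x$-weight above is precisely $w(x;a_{1},b_{1},c_{1},d_{1}b|q)$, so the hypothesis of that theorem is just the Askey--Wilson orthogonality, and $\Phi_{j,k}=\varphi_{j}(\cdot;\rho(k))\psi_{k}f(\cdot,\rho(k))=p_{j,k}^{(\gamma,\delta)}$.

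I expect the only genuine subtlety to be the conjugation bookkeeping: one must perform the $y$-integration (which forces $k=n$) \emph{before} pairing the $e^{i\theta}$ and $e^{-i\theta}$ factors, because off the diagonal $(d_{1}q^{\gamma k+\delta}e^{i\theta};q)_{\infty}^{-1}(d_{1}q^{\gamma n+\delta}e^{-i\theta};q)_{\infty}^{-1}$ is not of the form $h(x,\cdot)^{-1}$ and the $x$-integral is then not a clean Askey--Wilson pairing. Beyond that it is bookkeeping. One also needs the standing assumptions $0<q<1$ and $\gamma n+\delta\ge 0$ (so that the shifted parameters stay in $(-1,1)$ and the integrals converge), consistent with the condition $\alpha,\beta\ge 0$ recorded in the statement.
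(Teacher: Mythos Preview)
Your proposal is correct and follows essentially the same route as the paper: separate the double integral by Fubini, perform the $y$-integral first to force $k=n$ via Askey--Wilson orthogonality, then use $(d_{1}q^{\gamma k+\delta}e^{i\theta};q)_{\infty}(d_{1}q^{\gamma k+\delta}e^{-i\theta};q)_{\infty}=h(x,d_{1}q^{\gamma k+\delta})$ to recognize the $x$-integral as a standard Askey--Wilson pairing with shifted fourth parameter. Your additional remark that the result is the instance $f(x,b)=1/(d_{1}b\,e^{i\theta};q)_{\infty}$ of Theorem~\ref{theorem:orthogonal system 2} is a valid alternative packaging that the paper does not spell out.
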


\begin{proof}
Observe that the integral
\begin{gather*}
\int_{-1}^{1}\int_{-1}^{1}p_{j,k}^{(\gamma,\delta)}(x,y;a_{1},b_{1},c_{1},d_{1};a_{2},b_{2},c_{2},d_{2}|q) w(x,y;a_{1},b_{1},c_{1};a_{2},b_{2},c_{2},d_{2}|q \\
\qquad{} \times \overline{p_{m,n}^{(\gamma,\delta)}(x,y;a_{1},b_{1},c_{1},d_{1};a_{2},b_{2},c_{2},d_{2}|q)}) dxdy
\end{gather*}
equals
\begin{gather*}
 \int_{-1}^{1}p_{j}\big(x;a_{1},b_{1},c_{1},d_{1}q^{\gamma k+\delta}|q\big) p_{m}\big(x;a_{1},b_{1},c_{1},d_{1}q^{\gamma n+\delta}|q\big)\\
 \qquad\quad{}
\times \frac{h(x,1)h(x,\sqrt{q})h(x,-1)h(x,-\sqrt{q})dx}{h(x,a_{1})h(x,b_{1})h(x,c_{1})(d_{1}q^{\gamma k+\delta}e^{i\theta},d_{1}q^{\gamma n+\delta}e^{-i\theta};q)_{\infty}\sqrt{1-x^{2}}}\\
\qquad\quad{}
\times \int_{-1}^{1}p_{k}(y;a_{2},b_{2},c_{2},d_{2}|q)p_{n}(y;a_{2},b_{2},c_{2},d_{2}|q)
 w(y;a_{2},b_{2},c_{2},d_{2}|q)dy\\
 \qquad{}
= \frac{2\pi\delta_{k,n}(a_{2}b_{2}c_{2}d_{2}q^{2n};q)_{\infty}(a_{2}b_{2}c_{2}d_{2}q^{n-1};q)_{n}}{(q^{n+1};q)_{\infty}(a_{2}b_{2}q^{n},a_{2}c_{2}q^{n},a_{2}d_{2}q^{n},b_{2}c_{2}q^{n},b_{2}d_{2}q^{n},c_{2}d_{2}q^{n};q)_{\infty}}\\
\qquad\quad{}
\times \int_{-1}^{1}p_{j}\big(x;a_{1},b_{1},c_{1},d_{1}q^{\gamma n+\delta}|q\big)p_{m}\big(x;a_{1},b_{1},c_{1},d_{1}q^{\gamma n+\delta}|q\big)\\
\qquad\quad{}
\times \frac{h(x,1)h(x,\sqrt{q})h(x,-1)h(x,-\sqrt{q})dx}{h(x,a_{1})h(x,b_{1})h(x,c_{1})h(x,d_{1}q^{\gamma n+\delta})\sqrt{1-x^{2}}}\\
\qquad{}
= \frac{4\pi^{2}\delta_{j,m}\delta_{k,n}}{(q^{m+1},a_{1}b_{1}q^{m},a_{1}c_{1}q^{m},b_{1}c_{1}q^{m},b_{1}c_{1}q^{m};q)_{\infty}}\\
\qquad\quad{}
\times \frac{(a_{1}b_{1}c_{1}d_{1}q^{2m+\gamma n+\delta};q)_{\infty}(a_{1}b_{1}c_{1}d_{1}q^{m+\gamma n+\delta-1};q)_{m}}{(a_{1}d_{1}q^{m+\gamma n+\delta},b_{1}d_{1}q^{m+\gamma n+\delta},c_{1}d_{1}q^{m+\gamma n+\delta};q)_{\infty}}\\
\qquad\quad{}
\times \frac{(a_{2}b_{2}c_{2}d_{2}q^{2n};q)_{\infty}(a_{2}b_{2}c_{2}d_{2}q^{n-1};q)_{n}}{(q^{n+1},a_{2}b_{2}q^{n},
a_{2}c_{2}q^{n},a_{2}d_{2}q^{n},b_{2}c_{2}q^{n},b_{2}d_{2}q^{n},c_{2}d_{2}q^{n};q)_{\infty}}.\tag*{\qed}
\end{gather*}
\renewcommand{\qed}{}
 \end{proof}

\subsection*{Acknowledgements}

The authors are very grateful to the anonymous referees for their detailed reports on the f\/irst draft of this paper.
 Research of M.E.H.~Ismail supported by a grant from DSFP program at King Saud and by the National Plan for Science, Technology and innovation (MAARIFAH), King Abdelaziz City for Science and Technology, Kingdom of Saudi Arabia, Award number 14-MAT623-02.
 Research of R.~Zhang partially supported by National Science Foundation of China, grant No.~11371294.

\pdfbookmark[1]{References}{ref}
\LastPageEnding

\end{document}